\theoremstyle{plain}
\newtheorem{thm}{Theorem}
\newtheorem{cor}[thm]{Corollary}
\newtheorem{lem}[thm]{Lemma}
\newtheorem{lem*}[thm]{Lemma}
\newtheorem{prop}[thm]{Proposition}
\theoremstyle{definition}
\newtheorem{dfn}{Definition}
\theoremstyle{remark}
\newtheorem{rem}{Remark}
\newtheorem{rem*}{Remark}
\numberwithin{rem}{section} 
\numberwithin{dfn}{section} 
\numberwithin{equation}{section} 
\numberwithin{thm}{section} 
\def\n{\noindent}
\def\ov{\overline}
\def\bs{\backslash}
\def\smatrix{\smallmatrix}
\def\pmatrix{\left(\smatrix}
\def\endpmatrix{\endsmallmatrix\right)}
\def\upi{\pmb{\pi}}
\def\!{\operatorname{!}}
\def\wh{\widehat}
\def\into{\hookrightarrow}
\def\onto{\twoheadrightarrow}
\def\C{\mathbb C}
\def\F{\mathbb F}
\def\H{\mathbb H}
\def\P{\mathbb P}
\def\Q{\mathbb Q}
\def\Z{\mathbb Z}
\def\br{\llbracket}
\def\ebr{\rrbracket}
\def\fb{\mathfrak{b}}
\def\fg{\mathfrak{g}}
\def\fh{\mathfrak{h}}
\def\fn{\mathfrak{n}}
\def\fp{\mathfrak{p}}
\def\AA{\mathcal A}
\def\EE{\mathcal E}
\def\FF{\mathcal F}
\def\HH{\mathcal H}
\def\LL{\mathcal L}
\def\NN{\mathcal N}
\def\OO{\mathcal O}
\def\1{\mathbf 1}
\def\Ad{\operatorname{Ad}}
\def\ad{\operatorname{ad}}
\def\diag{\operatorname{diag}}
\def\Hom{\operatorname{Hom}}
\def\GL{\operatorname{GL}}
\def\tr{\operatorname{tr}}
\def\str{\operatorname{str}}
\def\sl{\operatorname{sl}}
\def\Sp{\operatorname{Sp}}
\def\id{\operatorname{id}}
\def\Id{\operatorname{Id}}
\def\End{\operatorname{End}}
\def\wt{\widetilde}
\def\varep{\varepsilon}
\def\sgn{\operatorname{sgn}}
\def\gl{\operatorname{gl}}
\def\ev{\operatorname{ev}}
\def\ind{\operatorname{ind}}
\def\Par{\operatorname{Par}}
\def\0{\ov{0}}
\def\1{\ov{1}}
\def\tin{{n'}}
\def\even{\operatorname{even}}
\def\odd{\operatorname{odd}}
\def\dist{\operatorname{dist}}
\def\QS{\operatorname{QS}}
\def\Aa{\operatorname{AA}}
\def\AI{\operatorname{AI}}
\def\varep{\operatorname{\varepsilon}}
\def\ex{\operatorname{ex}}
\def\ove{\ov{\varep}}
\def\ovd{\ov{d}}
\begin{document}

\title{Affine quantum super Schur-Weyl duality}

\author{Yuval Z. Flicker}
\begin{abstract}
The Schur-Weyl duality, which started as the study of the commuting actions of the symmetric group $S_d$ and $\GL(n,\C)$ on $V^{\otimes d}$ where $V=\C^n$, was extended by Drinfeld and Jimbo to the context of the finite Iwahori-Hecke algebra $H_d(q^2)$ and quantum algebras $U_q(\gl(n))$, on using universal $R$-matrices, which solve the Yang-Baxter equation. There were two extensions of this duality in the Hecke-quantum case: to the {\em affine} case, by Chari and Pressley, and to the {\em super} case, by Moon and by Mitsuhashi. We complete this chain of works by completing the cube, dealing with the general {\em affine super} case, relating the commuting actions of the affine Iwahori-Hecke algebra $H^a_d(q^2)$ and of the affine quantum Lie superalgebra $U_{q,a}^\sigma(\sl(m,n))$ using the presentation by Yamane in terms of generators and relations, acting on the $d$th tensor power of the superspace $V=\C^{m+n}$. Thus we construct a functor and show it is an equivalence of categories of $H_d^a(q^2)$ and $U_{q,a}^\sigma(\sl(m,n))$-modules when $d<n'=m+n$.
\end{abstract}
\date{\today}

\address{Ariel University, Ariel 40700, Israel; The Ohio State University, Columbus OH43210.}
\email{yzflicker@gmail.com}
\subjclass[2010]{14G05, 14G27, 14G25, 14E08}
\keywords{Affine Iwahori-Hecke algebra, affine quantum Lie superalgebra, $\sl(m,n)$, affine Schur-Weyl duality, universal $R$-matrix, Yang-Baxter equation} 
\thanks{This is a slightly improved exposition of the article which appeared Dec. 1, 2018 online in "Algebras and Representation Theory", http://doi.org/10.1007/s10468-018-9841-1.\\ 
\indent I wish to express my deep gratitude to the referee and to P. Deligne for carefully reading this work.\\ 
\indent Partially supported by the Simons Foundation grant \#317731. This work was partially carried out at MPIM, Bonn; YMSC, QingHua University, Beijing; and the Hebrew University, Jerusalem.}

\maketitle

\tableofcontents
\section{Introduction} \label{1}
The finite dimensional irreducible representations of the general linear group $\GL(n,\C)$, or equivalently its Lie algebra $\gl(n,\C)$, where $n$ is a positive integer and $\C$ is an algebraically closed field of characteristic zero, can be classified as highest weight modules, constructed as quotients of Verma modules. This applies to any semisimple Lie algebra, and was extended to classical semisimple Lie superalgebras by Kac \cite{k77}. 

Another approach was introduced by Schur \cite{sch01}, and differently in \cite{sch27}, who considered the permutation action of the symmetric group $S_d$ on $d\ge 1$ letters, and the diagonal action of $\GL(n,\C)\simeq\GL(V)$, on $V^{\otimes d}$. Schur proved that these two actions have a double centralizing property in $\End(V^{\otimes d})$. Representations of $\GL(V)$ are thus determined from those of $S_d$.

Denote Schur's representation by $\pi_d:S_d\to\End(V^{\otimes d})$. The group algebra $\C S_d$ decomposes as $\oplus_{\lambda\in\Par(d)}I_\lambda$, where $I_\lambda$ are simple algebras, and $\Par(d)$ is the set of partitions $\lambda=(\lambda_1,\lambda_2,\dots)$, $\lambda_i\ge\lambda_{i+1}\ge 0$, of $d$: $\sum\lambda_i=d$. Hence there is a subset $\Gamma(n;d)\subset\Par(d)$ such that $\pi_d(\C S_d)\simeq\oplus_{\lambda\in\Gamma(n;d)}I_\lambda$, so that the $\GL(V)$-irreducible representations which appear in $\End(V^{\otimes d})$ are precisely those associated to $\Gamma(n;d)$. This gives a bijection between the set of representations of $\GL(n)$ which appear in $V^{\otimes d}$ and a { subset} of the set of irreducible representations of $S_d$, known by the work of Frobenius \cite{f00} and Young \cite{yg}. Schur's work was continued by Weyl \cite{w53}, who determined the $\Gamma(n;d)$ and proved Weyl's strip theorem, which asserts that $\lambda=(\lambda_1,\lambda_2,\dots)\in\Gamma(n;d)$ iff $\lambda_j=0$ for $j>n$. In particular, if $n\ge d=\lambda_1+\lambda_2+\dots$ then $\lambda_j=0$ for all $j>n$, hence  $\Gamma(n;d)=\Par(d)$, so there is a canonical bijection between the set of irreducible representations of $\GL(n,\C)$ in $V^{\otimes d}$, and the set of irreducible representations of $S_d$. 

Multiplying $\pi_d: S_d\to\End(V^{\otimes d})$ with $\sigma\mapsto\sgn(\sigma)$, $S_d\onto\{\pm1\}$, one gets the same results, but with the vertical strips $\Gamma(m;d)'$ (where $\lambda'$ is the transpose of $\lambda$) replacing the horizontal strips $\Gamma(m;d)$. Gluing these two permutations actions of $S_d$ on $V^{\otimes d}$, Berele and Regev \cite{br87} studied the permutation action of $S_d$ on $V^{\otimes d}$, where $V=V_{\0}\oplus V_{\1}$, $\dim V_{\0}=m$, $\dim V_{\1}=n$, such that the restricted action on $V_{\0}$ permutes without a sign, and on $V_{\1}$ with $\sgn(\sigma)$. Obtained are the Young diagrams not containing the box $(m+1,n+1)$. This yields a representation $\wh\pi_d:S_d\to\End(V^{\otimes d})$, and a subset $\Gamma(m,n;d)\subset\Par(d)$ with $\wh\pi_d(\C S_d)\simeq\oplus_{\lambda\in\Gamma(m,n;d)}I_\lambda$. The Hook theorem \cite[Theorem 3.20]{br87} asserts that $\lambda=(\lambda_1,\lambda_2,\dots)\in\Gamma(m,n;d)$ iff $\lambda_j\le n$ for $j>m$, i.e., $\lambda_{m+1}<n+1$. Note that if $d<(m+1)(n+1)$ then $d=|\lambda|=\sum_{i\ge 1}\lambda_i\ge\sum_{1\le i\le m+1}\lambda_i\ge (m+1)\lambda_{m+1}$ implies $\lambda_{m+1}<n+1$.

Another way to state Schur's work is as follows. Let $(\rho,V)$ be the natural $n$-dimensional representation of $\GL(n,\C)$, and $\rho_d$ the diagonal representation on $V^{\otimes d}$. This action of $\GL(n,\C)$ commutes with the permutation action $\pi_d$ of $S_d$ on $V^{\otimes d}$. Thus to any right $S_d$-module $M$ there is a $\GL(n,\C)$-module $F(M)=M\otimes_{S_d}V^{\otimes d}$. The Schur-Weyl theory asserts that for $d\le n$, the functor $M\mapsto F(M)$ defines an equivalence from the category of $S_d$-modules of finite rank, to the category of finite rank $\GL(n,\C)$-modules whose irreducible constituents all occur in $V^{\otimes d}$.

Drinfeld and Jimbo introduced, independently, in 1985, a family of Hopf algebras $U_q(\fg)$, depending on a parameter $q\in \C^\times$, associated to any symmetrizable Kac-Moody algebra $\fg$. For $q$ not a root of unity, Jimbo \cite{j86} announced an analogue of the Schur-Weyl duality with the quantum group $U_q(\gl(n))$ replacing $\gl(n,\C)$, $V$ replaced by the natural $n$-dimensional irreducible representation of $U_q(\gl(n))$, and $S_d$ by its Hecke algebra $H_d(q^2)$. The latter is isomorphic to the group algebra $\C(q)S_d$ of $S_d$ over the field $\C(q)$ (see Proposition \ref{P7.2} for a precise statement). The representation of this Hecke algebra on $V^{\otimes d}$ is defined by the $R$-operators, or ``universal $R$-matrix", which is the solution for the quantum Yang-Baxter equation, and satisfies the relations of the generators defining the Hecke algebra.

The Hecke algebra, $H_d(q^2)$, also called the finite Iwahori-Hecke algebra, is the finite part of the general affine Iwahori-Hecke algebra, $H^a_d(q^2)$, which for prime-power $q$ is the convolution algebra $C_c[I\bs G/I]$ of the compactly supported $\C$-valued functions on the group $G$ of points over a local non-Archimedean field $F$ whose residual cardinality is $q$, of a reductive connected $F$-group, which are bi-invariant under the action of an Iwahori subgroup $I$ of $G$. The finite Iwahori-Hecke algebra $H_d(q^2)$ is just the subalgebra corresponding to $C_c[I\bs K/I]$, where $K$ is a maximal compact subgroup of $G$. The affine algebra is of great importance (when $q$ is a prime-power) for automorphic forms and neighboring areas. It was given a presentation in terms of generators and relations by Iwahori and Matsumoto \cite{im65}, and another one -- which reflects better the structure of the double coset space $I\bs G/I$, by J. Bernstein \cite{hkp10}. These presentations make sense for all $q$.

Drinfeld suggested in \cite{d86} that the Schur-Weyl theory should extend to relate the affine Hecke algebra $H^a_d(q^2)$ and the affine quantum algebra $U_q(\wh\sl(n))$. This was done by Chari-Pressley \cite{cp96}, who constructed a functor from the category of finite-rank $H^a_d(q^2)$-modules to the category of completely decomposable finite rank $U_q(\wh\sl(n))$-modules whose irreducible constituents occur in $V^{\otimes d}$, when $q$ is not a root of unity, extending Jimbo's functor \cite{j86} relating the non-affine $U_q(\gl(n))$ and $H_d(q^2)$; see also \cite{g86}. A suitable limit as $q\to 1$ gives Drinfeld's \cite{d86} (see also \cite{d88}) Schur-Weyl duality for the Yangian $Y(\gl(n))$, where the role of $S_d$ is played by a degenerate affine Hecke algebra whose defining relations are obtained from those of $H^a_d(q^2)$ for some $q\to 1$.

A ``super" extension of Jimbo's work \cite{j86} to the context of the quantum superalgebra $U_q(\gl(m,n))$, where the Hecke algebra $H_d(q^2)$ remains, but its action is composed with a sign character, or alternatively a quantum extension of the work of Berele-Regev \cite{br87}, thus the action of $S_d$ is replaced by that of the finite Iwahori-Hecke algebra, and that of $\GL(n,\C)$ by that of the quantum superalgebra $U_q(\gl(m,n))$, was done by Moon \cite{mo03}, and also by Mitsuhashi \cite{mi06}. Both  use the result of Benkart, Kang, Kashiwara \cite{bkk00} which shows the complete reducibility of the tensor product $V^{\otimes d}$ of the natural representation $V$ of $U_q(\gl(m,n))$ using the crystal base theory of $U_q(\gl(m,n))$.

However, it is the action of the {\em affine} Hecke algebra which is the most interesting. Our aim here is to complete this missing general case by extending the Schur-Weyl duality to relate the action of the affine Iwahori-Hecke algebra $H^a_d(q^2)$ with that of the affine quantum Lie superalgebra $U_{q,a}^\sigma(\sl(m,n))$, thus extending the functor constructed by Jimbo and Chari-Pressley to the context of the affine Hecke algebra and the affine quantum Lie {\em super}algebra $U_{q,a}^\sigma(\sl(m,n))$, or alternatively the work of Moon and Mitsuhashi to the {\em affine} quantum Lie superalgebra case. This is the natural, general case. 

A necessary ingredient is a definition of the quantum affine Lie superalgebra in terms of generators and relations. This is provided by the work of Yamane \cite{y99}. In this affine super case there are new relations: $(\QS4)(4)$ and $(\QS5)(4)$, that do not appear in the non-super case, and we need to verify that they too are satisfied by the operators that we introduce. This is a novelty of the affine super case.

As in \cite[Theorem 4.2]{cp96}, to extend \cite{mo03} and \cite{mi06} to the affine case one needs to verify the relations which are new to the affine case, satisfied by the additional generators, $x_0^{\pm}$, or $E_0$ and $F_0$. Naturally our results can be used to obtain equivalence of categories of representations of affine Iwahori-Hecke algebras and affine quantum Lie superalgebras, as done in \cite{cp96} in the non-graded case. We prefer to leave this for a sequel, as well as other applications we have in mind.

In \cite{k14} (see also \cite{kkk13}) the main philosophy and results of categorification and 2-representation theory, and the quantum affine Schur-Weyl duality in this language is explained. The Khovanov-Lauda-Rouquier algebras play a central role. This is an interesting direction of further work. For recent survey of related work, and directions of current research, from relations to geometry by Maulik-Okounkov, to categorification of cluster algebras using $R$-matrices by Kang-Kashiwara-Kim-Oh, see \cite{h17}. For representation theory of $U_{q,a}^\sigma(\sl(m,n))$ see \cite{zh17} and references there, as well as \cite{zr93}. We hope there is still some interest in our modest but explicit construction.

Perhaps the most interesting fact about the Schur-Weyl duality in this quantun-Hecke setting is the unexpected connection between the affine Iwahori-Hecke algebra, which comes from number theory and automorphic forms for prime-power $q$, on one hand, and the quantum theory of Yang-Baxter equations, which affords the action of the Hecke algebra via the $R$-operators, for general $q$, originating from physics, on the other hand. The parameter $q$ is the residual cardinality of the local field from the arithmetic perspective, and can be interpreted as the temperature from the physical point of view.

My initial motivation to study this area was to understand Drinfeld's ideas on the Yang-Baxter equation and on quantum groups. I was fascinated by the words -- not knowing their meaning -- since I studied his ``elliptic modules". Clearly there is a strong resemblance between the Schur-Weyl duality, and the Galois-Automorphic duality that Drinfeld studied in ``elliptic modules". Another push came from a very brief conversation with Eric Opdam who mentioned to me his work (\cite{ho97}). This led me to realize that the area concerns Hecke algebras, with which I am familiar. The final nail came from a brief social conversation with Mikhail Kapranov that led me later to read his \cite{k18}, and then to Manin \cite{m97} and to Deligne-Morgan \cite{dm99} notes on Bernstein's lectures at IAS, which made me realize the significance of supersymmetry.

\section{Superalgebras}\label{2}
Let $m$, $n\ge 1$ be positive integers. Put $\tin=m+n$, $n'{}'=n'-1$. Let $R$ be a field of characteristic zero. For a fuller exposition to superalgebras see \cite{dm99}.

The {\em general linear} (Lie) {\em superalgebra} $\fg=\gl(m,n)$ over $R$ is the algebra $M(\tin\times\tin,R)$ of $\tin\times\tin$ matrices over $R$, $\Z/2$-graded as $\gl(m,n)_{\0}\oplus\gl(m,n)_{\1}$, where 
\[
\gl(m,n)_{\0}=\{\diag(A,D);\,A\in M(m\times m,R),\,D\in M(n\times n,R)\},
\] 
and
\[
\gl(m,n)_{\1}=\left\{\pmatrix 0&B\\ C&0\endpmatrix;\, B\in M(m\times n,R),\, C\in M(n\times m,R)\right\},
\]
with the bilinear {\em super bracket} $[x,y]=xy-(-1)^{ab}yx$ for $x\in\gl(m,n)_{\ov{a}}$, $y\in\gl(m,n)_{\ov{b}}$, $a$, $b\in\{0,\,1\}$, on $\gl(m,n)$. An element of $\gl(m,n)$ is called {\em homogeneous} if it lies in $\gl(m,n)_{\ov{a}}$.

Define a {\em parity function} $p$ by $p(x)=a$ if $0\not=x\in\gl(m,n)_{\ov{a}}$, $a\in\{0,\,1\}$.

Define the {\em supertrace} $\str\pmatrix A&B\\ C&D\endpmatrix=\tr A-\tr D$ on $\gl(m,n)$, where $\tr$ is the usual trace. 

Put $\sl(m,n)=\ker \str$. Put $I=\{1,2,\dots,\tin\}$, $I'=\{1,2,\dots,n'{}'=\tin-1\}$.

Let $E_{i,j}\in\gl(m,n)$ be the matrix whose only nonzero entry is 1 at the $(i,j)$-position.

The {\em Cartan subalgebra} $\fh$ of $\gl(m,n)$ is the $R$-span $\Sp_R\{E_{i,i};\,i\in I\}$, namely the algebra of diagonal matrices.

Let $h_i \in\fh$ $(i\in I')$ be $E_{i,i}-(-1)^{p(i)}E_{i+1,i+1}$, where $p(m)=1$ and $p(i)=0$ for $i\not=m$.

Denote by $\fh^\ast=\Hom(\fh,R)$ the {\em dual space of} $\fh$. Under the adjoint action ($\Ad(h)y=[h,y]$) of $\fh$, $\gl(m,n)$ decomposes as a direct sum of {\em root spaces} $\fh\oplus\oplus_{\alpha\in\fh^\ast}\gl(m,n)_{\alpha}$, where 
\[
\gl(m,n)_{\alpha}=\{X;\,\Ad(h)X=\alpha(h)X,\,\forall h\in\fh\}.
\]
An $\alpha\in\fh^\ast-\{0\}$ is called a {\em root} if the root space $\gl(m,n)_{\alpha}$ is not zero.

Let $\{\varep_i;\,i\in I\}$ denote the standard basis of $R^{\tin}=R^m\oplus R^n$. In particular $\{\varep_i;\,1\le i\le m\}$ is the standard basis of $R^m$, and $\{\varep_i;\,m< i\le \tin\}$ of $R^n$. The {\em simple roots} are $\alpha_i=\varep_i-\varep_{i+1}\in\fh^\ast$, $i\in I'$, and the {\em fundamental weights} are $\varpi_i=\varep_1+\dots+\varep_i\in\fh^\ast$, $i\in I'$. A root $\alpha$ is {\em even} if $\gl(m,n)_\alpha\cap\gl(m,n)_{\0}\not=0$, and {\em odd} if $\gl(m,n)_\alpha\cap\gl(m,n)_{\1}\not=0$. Thus all simple roots are even, except $\alpha_m$, which is odd.

The {\em lattice of integral weights} $P\subset\fh^\ast$ is the $\Z$-span of $\{\varep_i;\,i\in I\}$. The {\em dual weight lattice} $P^\vee\subset\fh$ is the free $\Z$-module spanned by $E_{i,i}$, $i\in I$. For $\lambda\in\fh^\ast$, $h\in\fh$ define $\lambda(h)=\langle h,\lambda\rangle$ by linearity and $\varep_i(E_{j,j})=\delta_{i,j}=\delta(i,j)$ (= 1 if $i=j$; =0 if $i\not=j$). We get a natural pairing $\langle .,. \rangle:\fh\times\fh^\ast\to R$ with $\langle h_i,\alpha_j\rangle=\alpha_j(h_i)$. The {\em Cartan matrix} $A=(a_{ij}=\alpha_j(h_i);\,i,j\in I')$ has nonzero entries: 2 at each diagonal place $(i,i)\not=(m,m)$; $-1$ at each underdiagonal place $(i+1,i)$ and at each over diagonal place $(i,i+1)$, except at $(m,m+1)$ where the entry is 1. The entries at $(m,m)$ and at $(i,j)$ with $|i-j|\ge 2$ are zero. The Cartan matrix is {\em symmetrizable} in the sense that $DA$ is symmetric, where $D=\diag(I_m,-I_{n-1})$, and $I_k$ denotes the identity $k\times k$ matrix. Denote the diagonal entries of $D$ by $d_i$, thus $D=\diag(d_1,\dots,d_{\tin-1})$, and $d_i=1$ ($1\le i\le m$), $d_i=-1$ ($m< i<\tin$). Explicitly
\[
A=\pmatrix
2&-1&&&&&&&0\\
-1&2&-1&&&&&&0\\
0&&&&&&&&0\\
0&&-1&0&1&&&&0\\
0&&&-1&2&-1&&&0\\
0&&&&&&&&0\\
0&&&&&&-1&2&-1\\
0&&&&&&&-1&2
\endpmatrix,
\qquad
DA=\pmatrix
2&-1&&&&&&&0\\
-1&2&-1&&&&&&0\\
0&&&&&&&&0\\
0&&-1&0&1&&&&0\\
0&&&1&-2&1&&&0\\
0&&&&&&&&0\\
0&&&&&&1&-2&1\\
0&&&&&&&1&-2
\endpmatrix
\]
The middle rows are the $m$th and $(m+1)$st, $DA$ is symmetric. We also put $d_0=d_{n'}=-1$.

The above is the example with which we deal. In general $\fg$ will be the contragredient Lie superalgebra corresponding as in \cite{k77}, \cite{k78} to the following data $(I',\,P,\,\{\alpha_i\},\,\{H_i\},\,A,\,D$, $\langle x,y\rangle,\,(x,y))$. Let $I'$ be the index set for the simple roots. Assume it is partitioned into two parts corresponding to the even simple roots and the odd simple roots:
\[
I'=I'_{\even}\cup I'_{\odd}.
\]
Write $p(i)=0$ if $i\in I'_{\even}$, and $p(i)=1$ if $i\in I'_{\odd}$. In our example of $\fg=\gl(m,n)$, $I'_{\odd}=\{m\}$, $I'_{\even}=I'-\{m\}$, $I'=\{1,\dots,m+n-1=n'{}'\}$.

Let $P$ be a free $\Z$-module -- its elements are called the {\em integral weights} -- given with a $\Q$-valued symmetric bilinear form $(.,.)$. Also given for each $i\in I'$ is the {\em simple root} $\alpha_i\in P$ and the {\em simple coroot} $h_i\in P^\vee$. In the natural pairing $\langle .,. \rangle:\,P^\vee\times P\to\Z$ they are assumed to satisfy: $\langle h_i,\alpha_i\rangle=2$ if $i\in I'_{\even}$; $\langle h_i,\alpha_i\rangle=0$ or 2 if $i\in I'_{\odd}$; $\langle h_i,\alpha_j\rangle\le 0$ if $j\not=i$ and $p(i)=p(j)$. Also given are nonzero integers $d_i$ with
\[
d_i\langle h_i,\lambda\rangle=(\alpha_i,\lambda)\qquad\forall\lambda\in P.
\]
Since $d_i\langle h_i,\alpha_j\rangle=(\alpha_i,\alpha_j)=(\alpha_j,\alpha_i)=d_j\langle h_j,\alpha_i\rangle$, the Cartan matrix 
\[
A=(a_{ij}=\langle h_i,\alpha_j\rangle;\,i,j\in I')
\]
is symmetrizable: $DA$ is symmetric if $D=\diag(d_i;\,i\in I')$.

More generally, one may consider a vector space $V$ of dimension $(m,n)$, and a homogeneous basis $(\varep_i;\,1\le i\le m+n)$ with $\varep_i$ of parity $p(i)$. Each such ordered basis gives rise to a deformation of the enveloping algebra of $\gl(V)$ as in the next section. But we consider here only the case where all even $\varep_i$ are placed first, for simplicity.

\section{Quantum superalgebras}\label{3}
Following \cite{bkk00} (and its predecessors \cite{kt91}, \cite{flv91}, \cite{st92}, \cite{st93}, \cite{y94}, \cite{y99}, and \cite{zr14}) we now introduce the $q$-deformation $U_q(\fg)$ of the universal enveloping algebra of the contragredient Lie superalgebra $\fg$ corresponding as in \cite{k77}, \cite{k78} to the data of last section. Denote by $q$ an indeterminate, put $q_i=q^{d_i}$. Define the bilinear form $[x,y]_v$ to be $xy-(-1)^{p(x)p(y)}vyx$ on homogeneous $x$, $y$; note that $[.,.]=[.,.]_1$. The associated quantum enveloping algebra $U'_q(\fg)$ is the associative algebra over $\Q(q)$ with 1, generated by $e_i$, $f_i$ ($i\in I'$), and $q^h$ ($h\in P^\vee$), satisfying
\[
q^h=1 \quad\mathrm{for}\quad h=0; \qquad q^{h+h'}=q^hq^{h'} \quad\mathrm{for}\quad h,\,h'\in P^\vee;
\]
\[
q^he_i=q^{\langle h,\alpha_i\rangle}e_iq^h,\quad q^hf_i=q^{-\langle h,\alpha_i\rangle}f_iq^h\quad\mathrm{for}\quad h\in P^\vee,\,i\in I';
\]
\[
[e_i,f_j]=e_if_j-(-1)^{p(i)p(j)}f_je_i \quad\mathrm{is}\quad \delta(i,j)\frac{q^{h_i}-q^{-h_i}}{q-q^{-1}}\quad\mathrm{for}\quad i,j\in I';
\]
(note that the notation $x_i^+$ and $x_i^-$ is often used for $e_i$ and $f_i$);\\ 
and the bitransitivity conditions \cite[p. 19]{k77} (We first consider the previous relations, and if in the resulting algebra $a$ satisfies the following property, then we put $a=0$):\\
If $a\in\sum_{i\in I'} U'_q(\fn_+)e_i U'_q(\fn_+)$ satisfies $f_ia\in U'_q(\fn_+)f_i$ for all $i\in I'$ then $a=0$.\\
If $a\in\sum_{i\in I'} U'_q(\fn_-)f_i U'_q(\fn_-)$ satisfies $e_ia\in U'_q(\fn_-)e_i$ for all $i\in I'$ then $a=0$.\\
Here $U'_q(\fn_+)$ is the subalgebra of $U'_q(\fg)$ generated by $\{e_i;\,i\in I'\}$, and $U'_q(\fn_-)$ by $\{f_i;\,i\in I'\}$.

Now $U'_q(\fg)$ is a Hopf superalgebra whose comultiplication $\Delta$, counit $\varep$, antipode $S$ are
\[
\Delta(q^h)=q^h\otimes q^h,\quad \Delta(e_i)=e_i\otimes q^{-d_ih_i}+1\otimes e_i,\quad \Delta(f_i)=f_i\otimes 1+q^{d_ih_i}\otimes f_i;
\]
\[
\varep(q^h)=1,\,\,\,\varep(e_i)=0=\varep(f_i);\,\,\, S(q^{\pm h})=q^{\mp h},\,\,\, S(e_i)=-e_iq^{d_ih_i},\,\,\, S(f_i)=-q^{-d_ih_i}f_i.
\]
Here $h\in P^\vee$, $i\in I'$. It is only the Hopf superalgebra structure which matters to us. It is not a Hopf algebra. One can introduce a Hopf algebra structure by replacing $U'_q(\fg)$ by $U_q(\fg)=U'_q(\fg)\rtimes\langle\sigma\rangle$, where $\sigma$ is the involution on $U'_q(\fg)$ given by:
\[
\sigma(q^h)=q^h,\qquad\sigma(e_i)=(-1)^{p(i)}e_i,\qquad\sigma(f_i)=(-1)^{p(i)}f_i,
\]
where again $h\in P^\vee$, $i\in I'$. Then $U_q(\fg)$ is a Hopf algebra with comultiplication $\Delta_\sigma$, counit $\varep_\sigma$, antipode $S_\sigma$ given by
\[
\Delta_\sigma(\sigma)=\sigma\otimes\sigma;\qquad\Delta_\sigma(q^h)=q^h\otimes q^h;
\]
\[
\Delta_\sigma(e_i)=e_i\otimes q^{-d_ih_i}+\sigma^{p(i)}\otimes e_i,\qquad \Delta_\sigma(f_i)=f_i\otimes 1+\sigma^{p(i)}q^{d_ih_i}\otimes f_i;
\]
\[
\varep_\sigma(\sigma)=1=\varep_\sigma(q^h),\qquad\varep_\sigma(e_i)=0=\varep_\sigma(f_i);
\]
\[
S_\sigma(\sigma)=\sigma;\,\,\, S_\sigma(q^{\pm h})=q^{\mp h},\,\,\, S_\sigma(e_i)=-\sigma^{p(i)}e_iq^{d_ih_i},\,\,\, S_\sigma(f_i)=-\sigma^{p(i)}q^{-d_ih_i}f_i.
\]

In our case of $\fg=\gl(m,n)$, the bitransitivity conditions take the form (see \cite[\S 6, Prop. 6.5.1, 6.7.1]{y99}:\\ 
{\em quadratic relations} (missing in \cite[bottom of p. 669]{zh14}):
\[
[e_i,e_j]=0=[f_i,f_j] \quad\mathrm{if}\quad \dist(i,j)\not=1,\quad i,\,j\in I';
\]
here $\dist(i,j)=|i-j|$; when $i=j=m$ this is:
\[
e_m^2=0=f_m^2;
\]
{\em cubic relations}:
\[
[e_i,[e_i,e_j]_{q^{-1}}]_q=e_i^2e_j-(q+q^{-1})e_ie_je_i+e_je_i^2 \quad\mathrm{is}\quad 0 \quad\mathrm{if}\quad \dist(i,j)=1 \quad\mathrm{and}\quad i\not=m;
\]
same as last line with $e_i$, $e_j$ replaced by $f_i$, $f_j$, or $x_i^{\pm}$, $x_i^{\pm}$, $i,\,j\in I'$;\\
{\em quartic relations} (use $e_m^2=0=f_m^2$ for the $=$), $m$, $n>1$:
\[
[[[e_{m-1},e_m]_q,e_{m+1}]_{q^{-1}},e_m]
=e_{m+1}e_me_{m-1}e_m+e_{m-1}e_me_{m+1}e_m
\]
\[
+e_me_{m-1}e_me_{m+1}+e_me_{m+1}e_me_{m-1}
-(q+q^{-1})e_me_{m-1}e_{m+1}e_m \quad\mathrm{is}\quad 0;
\]
same as the last formula with $e$ replaced by $f$, or $x^{\pm}$.

\section{Weight modules}\label{4}
A $U_q(\fg)$-module $M$ is called a {\em weight module} if it admits a weight space decomposition $M=\oplus_{\lambda\in P}M_\lambda$, where $M_\lambda=\{u\in M;\,q^hu=q^{\langle h,\lambda\rangle}u\,\,\,\forall h\in P^\vee\}$.  A weight module is a {\em highest weight module} with {\em highest weight} $\lambda\in P$ if there exists a unique (up to a scalar multiple) superhomogeneous ($\sigma u_\lambda=\pm u_\lambda$) nonzero vector $u_\lambda\in M$, called a {\em highest weight vector}, such that $M=U_q(\fg)u_\lambda$; $e_iu_\lambda=0$ for all $i\in I'$; $q^hu_\lambda=q^{\langle h,\lambda\rangle}u_\lambda$ for all $h\in P^\vee$. Denote the irreducible highest weight module with highest weight $\lambda$ by $V(\lambda)$. In our case of $\fg=\gl(m,n)$, the set of dominant integral weights is
\[
\Lambda=\left\{\lambda=\sum_{1\le i\le\tin}\lambda_i\varep_i;\,\lambda_i\in\Z,\,\lambda_1\ge\lambda_2\ge\dots\ge\lambda_m,\,\lambda_{m+1}\ge\dots\ge\lambda_{m+n}\right\}.
\]

Let $R=\Q(q)$ and $V=V_{\0}\oplus V_{\1}$ be a $\Z/2$-graded vector space, where $V_{\0}=\oplus_{1\le i\le m}Rv_i$, $V_{\1}=\oplus_{m< i\le\tin}Rv_i$, $\tin=m+n$. Write $p(v_i)=p(i)=0$ if $1\le i\le m$, and $=1$ if $m<i\le\tin$. 

The fundamental representation $(\rho,V)$ of $U_q(\gl(m,n))$ is irreducible and has highest weight $\varep_1$. It is defined by
\[
\rho(\sigma)v_i=(-1)^{p(i)}v_i\,\,\,(i,j\in I);\,\,\,\rho(q^h)v_i=q^{\langle h,\varep_i\rangle}v_i\,\,\,(h\in P^\vee);
\]
\[
\rho(e_i)v_j=\delta(j,i+1)v_{j-1};\,\,\,\rho(f_i)v_j=\delta(i,j)v_{j+1};
\]
where $v_0=0=v_{\tin+1}$. In the basis $\{v_i;\,i\in I\}$ of $V$, $\rho(e_i)=E_{i,i+1}$, $\rho(f_i)=E_{i+1,i}$, $\rho(q^h)=\diag(q^{\langle h,\varep_i\rangle})$. This representation extends to a representation $\rho_d$ of $U_q=U_q(\gl(m,n))$ on $V^{\otimes d}$ via the map
\[
\Delta^{(k)}=(\Delta_\sigma\otimes\Id^{\otimes(k-1)})\Delta^{(k-1)}:U_q\to U_q^{\otimes(k+1)},
\]
where $\Delta^{(1)}=\Delta_\sigma:U_q\to U_q^{\otimes 2}$. Thus we put
\[
\rho_d(x)=\rho^{\otimes d}\circ\Delta^{(d-1)}(x),\quad x\in U_q=U_q(\gl(m,n)).
\]
Explicitly $\rho_d:U_q\to\End_{\Q(q)}(V^{\otimes d})$ is given by
\[
\rho_d(\sigma)=\rho(\sigma)^{\otimes d},\qquad \rho_d(q^h)=\rho(q^h)^{\otimes d}\,\,\,(h\in P^\vee),
\]
\[
\rho_d(e_i)=\sum_{1\le k\le d}\rho(\sigma^{p(i)})^{\otimes(k-1)}\otimes\rho(e_i)\otimes\rho(q^{-d_ih_i})^{\otimes(d-k)}\quad(i\in I'),
\]
\[
\rho_d(f_i)=\sum_{1\le k\le d}\rho(\sigma^{p(i)}q^{d_ih_i})^{\otimes(k-1)}\otimes\rho(f_i)\otimes\Id^{\otimes(d-k)}\quad(i\in I').
\]
\begin{prop} $($\cite[Prop 1]{zr98}, \cite[Prop. 3.1]{bkk00}$)$. \label{P4.1} 
$\rho_d$ is a completely reducible representation of $U_q(\gl(m,n))$ on $V^{\otimes d}$, $d\ge 1$.
\end{prop}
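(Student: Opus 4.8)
The plan is to reduce complete reducibility of $\rho_d$ on $V^{\otimes d}$ to a statement purely about the category of finite-dimensional weight modules of $U_q(\gl(m,n))$, and then to invoke the crystal-base machinery of Benkart--Kang--Kashiwara \cite{bkk00}. First I would recall that for $q$ not a root of unity (here $q$ is a generic indeterminate over $\Q$, so this is automatic), the classification of \cite{bkk00} gives a family of irreducible highest-weight modules $V(\lambda)$, $\lambda$ ranging over the dominant integral weights $\Lambda$ displayed just above, each of which admits a (global) crystal basis. The key structural input is that $V=V(\varep_1)$ is precisely the fundamental ``vector'' representation, and that it possesses a crystal basis $(L,B)$ with $B$ the obvious one-element-per-$v_i$ crystal graph.

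Second, the core step is tensor-product closure: I would argue that the tensor product of two modules each admitting a crystal basis again admits a crystal basis, which is the graded analogue of Kashiwara's tensor-product rule, established in the super setting in \cite{bkk00}. Iterating, $V^{\otimes d}$ admits a crystal basis $(L^{\otimes d}, B^{\otimes d})$. Since the crystal $B^{\otimes d}$ decomposes into connected components, and each connected component is isomorphic to the crystal of some $V(\lambda)$ with $\lambda\in\Lambda$ (this is where one uses that the relevant $\lambda$ appearing — those $\lambda$ with $\lambda_{m+1}\le n$, by the hook-shape constraint inherited from Berele--Regev \cite{br87} — are exactly the ones whose $V(\lambda)$ are the ``polynomial'' irreducibles classified by \cite{bkk00}), one gets a candidate decomposition of $B^{\otimes d}$ matching a direct sum of crystals of irreducibles.

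Third, I would upgrade the crystal-level decomposition to an honest $U_q(\gl(m,n))$-module decomposition. The standard mechanism is: a finite-dimensional weight module whose crystal basis decomposes as a disjoint union of highest-weight crystals, each of which is the crystal of an irreducible $V(\lambda)$, is itself isomorphic to $\bigoplus V(\lambda)$ — one lifts highest-weight crystal vectors to genuine highest-weight vectors using the global basis, applies the universal property of $V(\lambda)$ to get maps $V(\lambda)\to V^{\otimes d}$, and checks these assemble to an isomorphism by a dimension/character count (compare $q$-characters via the Weyl-type character formula for the $V(\lambda)$, or simply count weight-space dimensions against $B^{\otimes d}$). This gives that $V^{\otimes d}$ is a direct sum of irreducibles, i.e. completely reducible.

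The main obstacle — and the reason this is cited rather than reproved — is the second step: establishing that tensor products of crystal bases exist and behave functorially in the \emph{super} category, where the naive bilinear form can be indefinite and the usual positivity arguments of Kashiwara's grand-loop induction do not transfer verbatim. In \cite{bkk00} this is handled by restricting to the ``polynomial'' modules $V(\lambda)$ with $\lambda$ of admissible hook shape, on which a suitable positive-definite contravariant form does exist, and by a careful induction on $d$; I would simply quote their Proposition~3.1 (and the supporting Theorems on crystal bases of $V(\lambda)$) for this, so that the proof here amounts to the first and third steps together with the citation. An alternative, if one wished to avoid crystals, would be to show directly that $\rho_d(U_q)$ is a semisimple algebra by exhibiting it as the centralizer of the (semisimple, since $H_d(q^2)\cong \C(q)S_d$) Hecke-algebra action and invoking double-centralizer semisimplicity — but that is logically downstream of the Schur--Weyl duality this paper is building, so the crystal-basis route via \cite{bkk00} is the appropriate one.
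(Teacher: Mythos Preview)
The paper gives no proof of this proposition; it is simply stated with citations to \cite{zr98} and \cite{bkk00} and used as a black box. Your sketch of the crystal-base argument is a faithful outline of the approach in \cite{bkk00}, and you correctly recognize that the result is quoted here rather than reproved.
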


\section{Affine Lie superalgebras}
We now proceed to describe the quantum affine Lie superalgebra $U^\sigma_q=U_{q,a}^\sigma(\sl(m,n))$ which is the main object of study in this work. It will be defined using generators and relations following \cite{y99}. To ease the comparison with \cite{y99}, note that our $(n'=m+n,n'{}'=n'-1,m)$ are $(N,n,N-m)$ in \cite{y99}. In this section we describe the non-quantum case.

Let $\Z/2=\{\0,\1\}$ be the cyclic group of order 2. Let $V=V(\0)\oplus V(\1)$ be a $\Z/2$-graded vector space. An $X\in V(\ov{i})$, $i=0,\,1$, is called \emph{homogeneous} of (degree $i$ and) {\em parity} $p(X)=i$. A {\em Lie superalgebra} is a $\Z/2$-graded $\C$-space $\fg=\fg(\0)\oplus\fg(\1)$ with a bilinear map $[\cdot ,\cdot]:\fg\times\fg\to\fg$, called a {\em Lie superbracket}, such that for homogeneous elements $X$, $Y$, $Z$ we have
\[
[X,Y]=-(-1)^{p(X)p(Y)}[Y,X]\qquad\in\quad\fg(\ov{p(X)+p(Y)}),\]
\[ [X,[Y,Z]]=[[X,Y],Z]+(-1)^{p(X)p(Y)}[Y,[X,Z]].
\]
An {\em invariant form} on $\fg$ is a bilinear form $(\cdot |\cdot):\fg\times\fg\to\C$ satisfying, for homogeneous $X$, $Y$, $Z$  in $\fg$, supersymmetry and Lie invariance, namely
\[
(X|Y)=(-1)^{p(X)p(Y)}(Y|X),\qquad ([X,Y]|Z)=(X|[Y,Z]).
\]
For $X\in\fg$, define $\ad(X):\fg\to\fg$ by $(\ad(X))(Y)=[X,Y]$. Put $\LL\fg=\fg\otimes_\C\C[t,t^{-1}]$.

Following \cite{vdl89}, inspired by \cite{k90} in the non-super case, define a Lie superalgebra $\wh\fg=\wh\fg(\0)\oplus\wh\fg(\1)$ by $\wh\fg(\0)=\LL\fg(\0)\oplus\C c\oplus\C d$, $\wh\fg(\1)=\LL\fg(\1)$ and
\[
[X\otimes t^r+a_1c+b_1d,Y\otimes t^s+a_2c+b_2d]=[X,Y]\otimes t^{r+s}+r\delta(r,-s)(X|Y)c+b_1sY\otimes t^s-b_2rX\otimes t^r.
\]

We are interested only in the nontwisted case, so we do not discuss the twisted case.

To define a symmetrizable affine Lie superalgebra abstractly define a {\em datum} to be a triple $(\EE,\Pi,p)$, consisting of: (1) A finite dimensional $\C$-vector space $\EE$ with a non-degenerate symmetric bilinear form $(\cdot ,\cdot):\EE\times\EE\to\C$. (2) A linearly independent subset $\Pi=\{\alpha_0,\alpha_1,\dots,\alpha_{n'{}'}\}$ of $\EE$; the $\alpha_i$ are called {\em simple roots}; $P=\Z\alpha_0\oplus\dots\oplus\Z\alpha_{n'{}'}$ the {\em root lattice}; $P_+=\Z_{\ge 0}\alpha_0\oplus\dots\oplus\Z_{\ge 0}\alpha_{n'{}'}$ the {\em positive root semilattice}; put $P_-=-P_+$. (3) A function $p:\Pi\onto\Z/2$; it extends uniquely to a group homomorphism $p:P\onto\Z/2$, called a {\em parity} function. Define the {\em Cartan algebra} $\HH=\EE^\ast=\Hom(\EE,\C)$ to be the linear dual of $\EE$. Identify an element $\nu\in\EE$ with $H_\nu\in\HH$ by $\mu(H_\nu)=(\mu,\nu)$ for all $\mu\in\EE$.

For a datum $(\EE,\Pi,p)$ define a Lie superalgebra $\wt{G}=\wt{G}(\EE,\Pi,p)$ by generators
\[
H\in\HH;\qquad E_i,\quad F_i\quad (0\le i\le n'{}');
\]
relations:
\[
[H,H']=0\qquad (H,\,H'\in\HH),\]
\[ [H,E_i]=\alpha_i(H)E_i,\qquad [H,F_i]=-\alpha_i(H)F_i,\]
\[ [E_i,F_j]=\delta(i,j)H_{\alpha_i};
\]
and parities
\[
p(E_i)=p(F_i)=p(\alpha_i),\qquad p(H)=0\quad(H\in\HH).
\]
The superalgebra $\wt{G}$ has a triangular decomposition $\wt{G}=\wt{\NN}^+\oplus\wt{\HH}\oplus\wt{\NN}^-$, where $\wt{\NN}^+$ is the free superalgebra with generators $E_i$, and $\wt{\NN}^-$ with $F_i$.

To a given datum $(\EE,\Pi,p)$ associate a partially ordered set $I(\EE,\Pi,p)$ of {\em admissible} Lie superalgebras $G^\#=\wt{G}/r^\#$, where $r^\#$ is an ideal of $\wt{G}$ which is {\em admissible}: $r^\#\cap\HH=\{0\}$. The partial order $>$ is defined by $\wt{G}/r_1^\#>\wt{G}/r_2^\#$ if $r_1^\#\subset r_2^\#$. Then $\wt{G}$ is the unique {\em top} element. Denote by $G$ the unique {\em bottom} element of $I(\EE,\Pi,p)$. It is called a {\em minimal admissible} Lie superalgebra. Note that $G_1^\#>G_2^\#$ iff there is a surjection $\Psi[G_1^\#,G_2^\#]:G_1^\#\to G_2^\#$ with $(H,E_i,F_i)\to(H,E_i,F_i)$. For any subset $B$ of $I(\EE,\Pi,p)$ define 
\[
V_BG^\#=\wt{G}/\cap_{G^\#\in B}\ker \Psi[\wt{G},G^\#].
\]
It is $>G^\#$ for all $G^\#\in B$. For any $\alpha\in\EE$ and $G^\#\in I(\EE,\Pi,p)$ put $G_\alpha^\#=\{X\in G^\#;\,[H,X]=\alpha(H)X,\,\forall H\in\HH\}$ and $\Phi[G^\#]=\{\alpha\in\EE-\{0\};\,\dim G_\alpha^\#\not=0\}$. The subspace $G_0^\#=\HH$, called the {\em Cartan subalgebra} of $G^\#$, is the same for all $G^\#$. Clearly $\Phi[G^\#]\subset P_+\cup\P_--\{0\}$, and $G_1^\#>G_2^\#$ implies $\Phi[G_1^\#]\supset\Phi[G_2^\#]$. Put $\Phi(\EE,\Pi,p)=\Phi[G]$.

In a Dynkin diagram associated with a datum $(\EE,\Pi,p)$ occur vertices labeled by $\alpha_i$, or simply $i$, $0\le i\le n'{}'$, and marked by\\

\smallskip
\n $\bigcirc$, called {\em white}, if $(\alpha_i,\alpha_i)\not=0$ and $p(\alpha_i)=0$,\\
$\begin{picture}(15, 15)(-5,-3)
{\color{gray}\put(0, 0){\circle*{10}}}
\put(5, -3){,}
\end{picture}
$ called {\em gray}, if $(\alpha_i,\alpha_i)=0$ and $p(\alpha_i)=1$,\\
$\begin{picture}(15, 15)(-5,-3)
{\color{black}\put(0, 0){\circle*{10}}}
\put(5, -3){,}
\end{picture}
$ called {\em black}, if $(\alpha_i,\alpha_i)\not=0$ and $p(\alpha_i)=1$.\\
\\
We are interested only in Dynkin diagrams whose vertices are white and gray. 

There is no edge between the $i$th and $j$th vertices if $(\alpha_i,\alpha_j)=0$. There is an edge\\

\smallskip
\n $
\begin{picture}(50, 15)(-5, 7)
\put(0, 10){\circle{10}}
\put(5, 10){\line(1, 0){30}}
\put(40, 10){\circle{10}}
\put(-2, -2){\tiny$i$}
\put(38, -2){\tiny$j$}
\end{picture}$ if $(\alpha_i,\alpha_i)=(\alpha_j,\alpha_j)=-2(\alpha_i,\alpha_j)\not=0$,\\
$
\begin{picture}(50, 20)(-5,-3)
{\color{gray}\put(0, 0){\circle*{10}}}
\put(5, 0){\line(1, 0){35}}
\put(35, -4){\large$\times$}

\put(30, -6){\tiny$x$}

\put(-2, -12){\tiny$i$}
\put(38, -12){\tiny$j$}
\end{picture}$ if $(\alpha_i,\alpha_j)\not=0$, and also $x=-(\alpha_j,\alpha_j)/2$ if $(\alpha_j,\alpha_j)\not=0$.\\

\smallskip
\n Here {\large${\times}$} can be white or gray. The Dynkin diagram of interest to us is of type $(\Aa)^{(1)}$.

Dynkin diagram of type $(\Aa)^{(1)} $: 
\begin{figure}[h]
\begin{picture}(200, 60)(40,-20)
\put(50, 0){\circle{10}}
\put(50, -10){\tiny$\alpha_1$}

\put(55, 0){\line(1, 0){20}}
\put(80, 0){\circle{10}}
\put(80, -10){\tiny$\alpha_2$}

\put(85, 0){\line(1, 0){10}}
\put(95, -1){...}
\put(105, 0){\line(1, 0){10}}
 \put(120, 0){\circle{10}}
\put(115, -10){\tiny$\alpha_{m-1}$}

\put(125, 0){\line(1, 0){20}}
{\color{gray}\put(150, 0){\circle*{10}}}
\put(150, -10){\tiny$\alpha_m$}

\put(155, 0){\line(1, 0){20}}
\put(180, 0){\circle{10}}
\put(175, -10){\tiny$\alpha_{m+1}$}

\put(185, 0){\line(1, 0){10}}
\put(195, -1){...}
\put(205, 0){\line(1, 0){10}}
\put(220, 0){\circle{10}}
\put(220, -10){\tiny$\alpha_{n''}$}

{\color{gray}\put(136, 40){\circle*{10}}}
\put(135, 50){\tiny$\alpha_0$}

\put(130, 38){\line(-2, -1){75}}
\put(140, 38){\line(2, -1){75}}
\end{picture}
\caption{Affine $\sl(m,n)$}
\label{fig:even}
\end{figure}

The superscript $(1)$ mean nontwisted, and we omit it from now on. The Dynkin diagram of type $(\Aa)$ is determined by $m\ge1$ and $n'>m$, $n'\ge 3$. We put $n'{}'=n'-1$, and $n=n'-m$ (our ($n'=m+n$, $n'{}'=n'-1$, $m$) are $(N,n,N-n)$ in \cite[\S1.5]{y99}). The vertices labeled by $m$ and $0$ in the diagram are gray.

From now on we consider only those $(\EE,\Pi,p)$ of type $(\Aa)$. 

Let $\EE^{\ex}$ (``$\EE$-extended") be an $(n'+2)$-dimensional $\C$-vector space with a nondegenerate bilinear symmetric form $(\cdot ,\cdot)$ and a basis $(\ove_1,\dots,\ove_{n'},\delta,\Lambda_0)$ (whose elements are named the ``fundamental elements of $(\EE,\Pi,p)$)" satisfying
\[
(\ove_i,\ove_j)=\delta(i,j)\ovd_i\quad(\ovd_i=\pm1),\qquad(\ove_i,\delta)=(\delta,\delta)=(\Lambda_0,\Lambda_0)=0,\qquad(\delta,\Lambda_0)=1.
\]
Write $(\Aa)^g$ for $(\Aa)$ if $\sum_{1\le i\le n'}\ovd_i\not=0$, and $(\Aa)^b$ for $(\Aa)$ if $\sum_{1\le i\le n'}\ovd_i=0$ ($g$ = good, $b$ = bad). Put $\theta=\sum_{1\le i\le n'}\ovd_i\ove_i$. Define $\EE$ to be $\EE^{\ex}$ if $(\Aa)^b$, and $\{x\in\EE^{\ex};\,(x,\theta)=0\}$ if $(\Aa)^g$. Then $(\cdot,\cdot)$ restricts to a nondegenerate symmetric form on $\EE$. The vertices in the Dynkin diagram are labeled by the roots $\alpha_i=\ove_i-\ove_{i+1}$ $(1\le i< n')$, and $\alpha_0=\ove_{n'}-\ove_1$.

The Lie superalgebra $G=G(\EE,\Pi,p)$ of type $(\Aa)$ is called of {\em affine} type $(\Aa)$.

The infinite dimensional symmetrizable minimal admissible Lie superalgebras of finite growth are parametrized in \cite{vdl89}. They are the affine Lie superalgebras listed at \cite[\S1.5]{y99}. We shall be concerned here only with those of type $(\Aa)$ ($=(\Aa)^{(1)}$).

The diagram $(\Aa)^g$ corresponds to $\sl(m,n)=A(m-1,n-1)$, $m\not=n$ (in fact, decorated by a superscript $(1)$, to indicate the non-twisted form). The center of $\sl(m,m)$ is $\C I_{2m}$. Put $A(m-1,m-1)=\sl(m,m)/\C I_{2m}$. Note that $\sl(m,m)$ and $A(m-1,m-1)$ are not minimal admissible Lie superalgebras since their simple roots are linearly dependent. The simple roots of $\gl(m,m)$ are linearly independent. Let $(\sl(m,m)^{(1)})^{\HH}$ be the ${sub}$algebra $\sl(m,m)^{(1)}\oplus\C E_{1,1}$ of $\gl(m,m)^{(1)}$, where $E_{1,1}$ denotes the matrix whose only nonzero entry is 1 at the $(1,1)$ position. Put $(A(m-1,m-1)^{(1)})^{\HH}$ for the quotient $(\sl(m,m)^{(1)})^{\HH}/(\oplus_{k\not=0}\C I_{2m}\otimes t^k)$. It is a minimal admissible Lie superalgebra.

For our $(\EE,\Pi,p)$ of type $(\Aa)$ we have $\dim_{\C}G_\alpha=1$ for $\alpha\in\Phi[G]-\Z\delta$, $G=G(\EE,\Pi,p)$.

An admissible Lie superalgebra $G^\#>G$ is called {\em affine admissible} if $\Phi[G^\#]=\Phi[G]$ and $\dim_{\C}G_\alpha^\#=1$ for each $\alpha\in\Phi[G]-\Z\delta$. Write $\AI=\AI(\EE,\Pi,p)$ for the set of affine admissible Lie superalgebras with datum $(\EE,\Pi,p)$. Then $G_{\AI}^\#=\wt{G}/\cap_{G^\#\in \AI}\ker\Psi[\wt{G},G^\#]$ is the unique maximal affine-admissible Lie superalgebra in $\AI$.

Let $\rho\in\EE$ be a vector satisfying $(\rho,\alpha_i)=\frac12(\alpha_i,\alpha_i)$ for all $\alpha_i\in\Pi$ (\cite[Proposition 1.2.2]{y99}). If $(\delta,\rho)\not=0$ then $G^\#_{\AI}=G$. An example where $(\delta,\rho)=0$ is $G_1$ with Dynkin diagram $(\Aa)$ and $n'=4$, with parity given by $p(\alpha_1)=p(\alpha_3)=0$, $p(\alpha_0)=p(\alpha_2)=1$. Then $G_1=(A(1,1)^{(1)})^{\HH}\not=G_{1,\AI}^\#=(\sl(2,2)^{(1)})^{\HH}$, and $\dim(G_1)_{k\delta}=2\not=3=\dim_\C(G_{1,AJ}^\#)_{k\delta}$ for $k\not=0$. In fact, $G_{\AI}^\#$ is $G=\sl(m,n)^{(1)}$ in case $(\Aa)^g$ (thus $m\not=n$), and it is $(\sl(n'/2,n'/2)^{(1)})^{\HH}$ in case $(\Aa)^b$ (where $n'=2m$, $n=m$) (\cite[Theorem 3.5.1]{y99}).

\begin{thm}\label{T5.1} 
$($\cite[Theorem 4.1.1]{y99}$)$. The Lie superalgebra $G^\#_{\AI}$ of datum $(\EE,\Pi,p)$ of type $(\Aa)$ can also be defined by generators $H\in\HH$, $E_i$, $F_i$ $(0\le i\le n'{}')$, parities $p(H)=0$, $p(E_i)=p(F_i)=p(\alpha_i)=0$ $(i\not=0,\,m)$, $p(E_0)=p(F_0)=1=p(E_m)=p(F_m)$, and relations\\
$(\mathrm{S}1)$ $\qquad [H,H']=0,\qquad H,\,H'\in\HH;$\\
$(\mathrm{S}2)$ $\qquad [H,E_i]=\alpha_i(H)E_i,\qquad [H,F_i]=-\alpha_i(H)F_i;$\\
$(\mathrm{S}3)$ $\qquad [E_i,F_j]=\delta_{ij}H_{\alpha_i};$\\
$(\mathrm{S}4)(1)$ $\quad [E_i,F_j]=0 \,\,\,{if}\,\,\, \dist(\alpha_i,\alpha_j)\ge 2;$\\
$(\mathrm{S}4)(2)$ $\quad [E_0,F_0]=0=[E_m,E_m],\,\,\,{thus}\,\,\, E_0^2=0=E_m^2;$\\
$(\mathrm{S}4)(3)$ $\quad [E_i,[E_i,E_{i+1}]]=0=[E_i,[E_i,E_{i-1}]]\quad(0\not=i\not=m);$\\
$(\mathrm{S}4)(4)$ $\quad [[[E_{m-1},E_m],E_{m+1}],E_m]=0,\,\,\,{equivalently}\,\,\, [[[E_{m+1},E_m],E_{m-1}],E_m]=0;$\\
$(\mathrm{S}4)(4')$ $\quad [[[E_{n'{}'},E_0],E_1],E_0]=0,\,\,\,{equivalently}\,\,\, [[[E_1,E_0],E_{n'{}'}],E_0]=0;$\\
$(\mathrm{S}5)(1)-(\mathrm{S}5)(4'):$ same as $(\mathrm{S}4)(1)-(\mathrm{S}4)(4')$ with $F_j$ replacing $E_j$.
\end{thm}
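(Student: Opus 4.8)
The statement is a Serre-type presentation theorem, and I would prove it following the template of Kac's presentation of affine Kac--Moody algebras, adapted to the super setting as in \cite{vdl89, y99}. Write $J\subset\wt G$ for the ideal of $\wt G$ generated by the left-hand sides of $(\mathrm{S}4)(1)$--$(\mathrm{S}4)(4')$ and of $(\mathrm{S}5)(1)$--$(\mathrm{S}5)(4')$, and set $\ov{G}=\wt G/J$; the goal is to identify $\ov G$ with $G^\#_{\AI}$. \emph{Step 1: $\ov G$ is admissible.} I would first note that each generator of $J$ is a $P$-weight vector lying either in $\wt{\NN}^+$ (those built from the $E_i$) or in $\wt{\NN}^-$ (those built from the $F_i$), of weight strictly in $P_+-\{0\}$ resp.\ $P_--\{0\}$. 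Hence $J=J^+\oplus J^-$ with $J^\pm\subset\wt{\NN}^\pm$, and $\ov G$ inherits a triangular decomposition $\ov G=\ov{\NN}^-\oplus\HH\oplus\ov{\NN}^+$ with $\ov{\NN}^\pm=\wt{\NN}^\pm/J^\pm$. In particular $J\cap\HH=\{0\}$, so $\ov G\in I(\EE,\Pi,p)$, and $\ov G\ge G$.

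\emph{Step 2: a surjection onto $G^\#_{\AI}$.} Using the explicit loop realization of $G^\#_{\AI}$ --- which is $\sl(m,n)^{(1)}$ in case $(\Aa)^g$ and $(\sl(n'/2,n'/2)^{(1)})^{\HH}$ in case $(\Aa)^b$, by \cite[Theorem 3.5.1]{y99} and \cite[\S1.5]{y99} --- I would check by a finite direct computation that all relations $(\mathrm{S}4)$, $(\mathrm{S}5)$ hold among the Chevalley-type generators there; the relations beyond the quadratic and cubic Serre relations, chiefly the two quartic families $(\mathrm{S}4)(4)$, $(\mathrm{S}4)(4')$, $(\mathrm{S}5)(4)$, $(\mathrm{S}5)(4')$ attached to the gray vertices $m$ and $0$, are verified by expanding the nested brackets in the loop algebra and using $E_m^2=E_0^2=0$ and $F_m^2=F_0^2=0$. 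This produces a surjective, generator-preserving homomorphism $\phi\colon\ov G\onto G^\#_{\AI}$, which is graded by $P$ and is the identity on $\HH$.

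\emph{Step 3: dimension bounds.} It then suffices to prove $\dim\ov G_\alpha\le\dim(G^\#_{\AI})_\alpha$ for every $\alpha\in\EE$, since combined with surjectivity of $\phi$ this forces $\phi$ to be an isomorphism (in particular $\Phi[\ov G]=\Phi[G]$, so $\ov G$ is affine-admissible). For $\alpha=0$ both sides equal $\HH$. For a real root $\alpha\in\Phi[G]-\Z\delta$ the bound to establish is $\dim\ov G_\alpha\le1$, and for $\alpha\in\Z\delta$ one must match $\dim\ov G_{k\delta}$ with the rank of the underlying finite superalgebra. As in the Kac--Moody case, the cubic relations $(\mathrm{S}4)(3)$, $(\mathrm{S}5)(3)$ make $\ad E_i$ and $\ad F_i$ locally nilpotent on $\ov G$ for the white vertices $i\ne0,m$, so $\exp(\ad E_i)$ and $\exp(\ad F_i)$ define automorphisms of $\ov G$ realizing the even part of the affine Weyl group; at the two gray vertices, where $(\mathrm{S}4)(2)$, $(\mathrm{S}5)(2)$ give $E_i^2=F_i^2=0$, I would use the odd reflection automorphisms instead. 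These symmetries allow one to reduce the real-root bound to the case of a simple root $\alpha=\alpha_i$, where $\ov G_{\alpha_i}=\C E_i$ is visibly one-dimensional via the rank-one subalgebras ($\sl_2$ for white, $\sl(1,1)$ for gray). The imaginary-root bound is then obtained by using all of $(\mathrm{S}4)$, $(\mathrm{S}5)$ --- here the quartic relations are indispensable --- to rewrite any weight-$k\delta$ element of $\ov{\NN}^+$ as a linear combination of a fixed finite spanning set of iterated brackets, as carried out in \cite[\S1.6, \S3]{y99}.

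I expect Step 3, and within it the imaginary-root estimate, to be the main obstacle: one must show that the quadratic, cubic, and the \emph{new} quartic relations $(\mathrm{S}4)(4)$, $(\mathrm{S}4)(4')$ and their $F$-analogues --- which have no counterpart away from the two gray vertices --- together suffice to cut $\wt{\NN}^+$ down to the correct size in every degree $k\delta$, and to control the root spaces near the gray vertices, where ordinary $\sl_2$ integrability is unavailable and must be replaced by odd reflections. This is precisely the point at which the affine \emph{super} case genuinely departs from Kac's classical argument, and it is the step carried out in detail in \cite{y99}.
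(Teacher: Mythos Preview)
The paper does not prove Theorem~\ref{T5.1}: it is stated there purely as a citation of \cite[Theorem~4.1.1]{y99}, with no argument given. So there is no ``paper's own proof'' to compare your proposal against.

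That said, your outline is a sound and standard strategy for a Serre-type presentation result of this kind, and it is close in spirit to what Yamane does in \cite{y99}. Step~1 (admissibility via the triangular decomposition of $J$) and Step~2 (verifying the relations in the explicit loop model to produce the surjection) are routine and correctly described. Step~3 is, as you say, the substantive part. One caution on terminology: odd reflections at the gray vertices are not automorphisms of $\ov G$ in the way $\exp(\ad E_i)\exp(-\ad F_i)\exp(\ad E_i)$ is for white $i$; rather, they are isomorphisms between $\ov G$ and the algebra presented by a \emph{different} Dynkin diagram (with the gray vertex moved). In practice Yamane handles the real-root bound and the imaginary-root bound not by a single Weyl/odd-reflection transitivity argument, but by an inductive combinatorial analysis producing explicit spanning sets for each root space (see \cite[\S2--\S4]{y99}); your sketch captures the shape of that argument but understates the bookkeeping involved. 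None of this is a genuine gap --- your plan would succeed if executed --- but be aware that the ``use odd reflections as automorphisms'' shortcut needs to be replaced by the change-of-Borel machinery when you actually carry it out.
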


\section{Affine quantum Lie superalgebras}
Finally we arrive to the description of the objects of interest in this work, the affine quantum Lie superalgebras and their defining relations, following \cite[\S6]{y99, y01, yy}. Here the Quantum-Serre relations (QS) replace the Serre relations (S) of Theorem \ref{T5.1}.

Let $\C(q)$ denote the field of rational functions in an indeterminate $q$. Denote by $\sigma$ the generator of $\Z/2$. Let $V$ be a $\Z/2$-graded $\C(q)$-algebra. It is a Lie $\C(q)$-superalgebra with the superbracket defined by linearity and
\[
[X,Y]=XY-(-1)^{p(X)p(Y)}YX
\]
on homogeneous elements $X$, $Y$ of $V$. Now $\Z/2$ acts on $V$ by $\sigma(X)=(-1)^{p(X)}X$ on homogeneous elements. Define the $\C(q)$-algebra $V^\sigma$, called the {\em extension} of $V$ by $\sigma$, to be $V\rtimes\Z/2=V\oplus\sigma V$, where $V\sigma=\sigma V$ and $\sigma X\sigma=\sigma(X)$. For $\Z/2$-graded $\C(q)$-algebras homomorphisms $\varphi:V\to W$, define the extension $\varphi^\sigma:\,V^\sigma\to W^\sigma$ of $\varphi$ by $\sigma$, by $\varphi^\sigma(X)=\varphi X$ ($X\in V$) and $\varphi^\sigma(\sigma)=\sigma$. It is an algebra homomorphism.

Let $(\EE,\Pi,p)$ be a datum. A quadruple $(\EE,\Pi,p,\Gamma)$ is a {\em lattice datum} if (a) $\Gamma$ is a {\em lattice} in $\EE$, namely $\Gamma$ is a $\Z$-span of a basis of $\EE$, (b) $\Pi\subset\Gamma$, (c) $(\gamma,\gamma')\in\Z$ for all $\gamma$, $\gamma'\in\Gamma$.

For a lattice datum $(\EE,\Pi=\{\alpha_0,\dots,\alpha_{n'{}'}\},p,\Gamma)$, define an associative $\Z/2$-graded $\C(q)$-algebra $\wt{U}_q=\wt{U}_q(\EE,\Pi,p,\Gamma)$ with 1, by generators
\[
K_\gamma\quad(\gamma\in\Gamma),\qquad E_i,\quad F_i\quad (0\le i\le n'{}'),
\]
parities
\[
p(K_\gamma)=0,\qquad p(E_0)=p(F_0)=p(E_m)=p(F_m)=1,\quad p(E_i)=p(F_i)=0\quad(i\not=0,\,m),
\]
and relations\\
$(\QS1)\qquad K_0=1,\qquad K_\gamma K_{\gamma'}=K_{\gamma+\gamma'}\qquad(\gamma,\,\gamma'\in\Gamma);$\\
$(\QS2)\qquad K_\gamma E_i K_\gamma^{-1}=q^{(\gamma,\alpha_i)}E_i,\qquad K_\gamma F_i K_\gamma^{-1}=q^{-(\gamma,\alpha_i)}F_i;$\\
$(\QS3)\qquad [E_i,F_j]=\delta_{ij}\frac{K_{\alpha_i}-K_{\alpha_i}^{-1}}{q_i-q_i^{-1}}\quad(0\le i,\,j\le n'{}').$\\
Recall from section 2 that $d_i=1$ ($1\le i\le m$) and $d_i=-1$ ($m<i\le n'$), and $d_0=d_{n'}$.

Note that the sub-Hopf-algebra of $U_q(\fg)$ of section 3 generated by the $e_i$, $f_i$, $q^{h_i}$, $\sigma$ for $1\le i\le n'{}'$ embeds naturally in the affine quantum Lie superalgebra $U_q^\sigma$ of this section by
\[
e_i\mapsto E_iK_{\alpha_i}^{-1},\quad f_i\mapsto K_{\alpha_i}E_i,\quad q^{h_i}\mapsto K_{\alpha_i}^{d_i},\quad \sigma\mapsto\sigma.
\]
This embedding is a homomorphism of Hopf algebras. Indeed we have $\frac{q^{h_i}-q^{-h_i}}{q-q^{-1}}=\frac{q^{d_ih_i}-q^{-d_ih_i}}{q_i-q_i^{-1}}$ as $q_i=q^{d_i}$. Note that the finite-type Schur-Weyl duality of Proposition \ref{P11.1} below is stated for $U_q^\sigma(\sl(m,n))$ to simplify the notation, but it applies to $U_q^\sigma$.

The extension $\wt{U}_q^\sigma=\wt{U}_q^\sigma(\EE,\Pi,p,\Gamma)$ of $\wt{U}_q$ by $\sigma$ has a {\em Hopf algebra} structure $(\wt{U}_q,\Delta,S,\varep)$. The {\em comultiplication} $\Delta$, {\em antipode} $S$, {\em counit} $\varep$ are defined by
\[
\Delta(\sigma)=\sigma\otimes\sigma,\qquad \Delta(K_\gamma)=K_\gamma\otimes K_\gamma,\]
\[
\Delta(E_i)=E_i\otimes 1+K_{\alpha_i}\sigma^{p(\alpha_i)}\otimes E_i,\qquad \Delta(F_i)=F_i\otimes K_{\alpha_i}^{-1}+\sigma^{p(\alpha_i)}\otimes F_i,\]
\[
S(\sigma)=\sigma,\qquad S(K_\gamma)=K_\gamma^{-1},\qquad S(E_i)=-K_{\alpha_i}^{-1}\sigma^{p(\alpha_i)}E_i,\qquad S(F_i)=-\sigma^{p(\alpha_i)}F_iK_{\alpha_i},\]
\[
\varep(\sigma)=1,\qquad \varep(K_\gamma)=1,\qquad\varep(E_i)=0,\qquad\varep(F_i)=0.
\]

Let $\wt{U}_q^+$ be the subalgebra with 1 of $\wt{U}_q$ generated by the $E_0,\dots,E_{n'{}'}$. It is in fact freely generated by these generators. The analogous statement holds for $\wt{U}_q^-$, generated by the $F_0,\dots,F_{n'{}'}$. The subalgebra with 1 of $\wt{U}_q$ generated by the $K_\gamma$ ($\gamma\in\Gamma$) is denoted by $T$. The $K_\gamma$ ($\gamma\in\Gamma$) make a basis of $T$, and there is a $\C(q)$-linear isomorphism
\[
\wt{U}_q^+\otimes_{\C(q)}T^\sigma\otimes_{\C(q)}\wt{U}_q^-\to\wt{U}_q^\sigma,\qquad X\otimes Z\otimes Y\mapsto XZY,\qquad T^\sigma=T\oplus\sigma T.
\]

Following Drinfeld, \cite[6.3]{y99} defines subspaces $I^+$ of $\wt{U}_q^+$ and $I^-$ of $\wt{U}_q^-$, and Hopf ideals $J^+=I^+T^\sigma\wt{U}_q^-$ and $J^-=\wt{U}_q^+T^\sigma I^-$ of $\wt{U}_q^\sigma$, and the Hopf algebra
\[
U_q^\sigma=U_q^\sigma(\EE,\Pi,p,\Gamma)=(U_q^\sigma,\Delta,S,\varep)=\wt{U}_q^\sigma/(J^++J^-).
\]
If $\pi:\wt{U}_q^\sigma\to U_q^\sigma$ is the quotient map, write $\sigma$, $K_\gamma$, $E_i$, $F_i$, $T$, $T^\sigma$, $\wt{U}_q^+$, $\wt{U}_q^-$ also for their images under $\pi$. Note that $\ker\pi|T^\sigma=\{0\}$, $\ker\pi|\wt{U}_q^+=I^+$, $\ker\pi|\wt{U}_q^-=I^-$. There is a $\C(q)$-linear isomorphism
\[
U_q^+\otimes_{\C(q)}T^\sigma\otimes_{\C(q)}U_q^-\xrightarrow{\sim}U_q^\sigma,\qquad X\otimes Z\otimes Y\mapsto XZY.
\]
\begin{rem} 
Specialization at $q=1$. Let $A=\C[q,q^{-1}]$ be the $\C$-subalgebra of the field $\C(q)$ generated by $q$ and $q^{-1}$. Let $U_A^\sigma$ be the $A$-algebra of $U_q^\sigma=U_q^\sigma(\EE,\Pi,p,\Gamma)$ generated by
\[
\sigma,\quad K_\gamma,\quad [K_\gamma]=\frac{K_\gamma-K_\gamma^{-1}}{q-q^{-1}}\quad (\gamma\in\Gamma),\quad E_i,\,\,\, F_i\,\,\,(0\le i\le n'{}').
\]
Let $T_A^\sigma$ (resp. $U_A^+$, $U_A^-$) be the subalgebra of $T^\sigma$ (resp. $U_q^+$, $U_q^-$) generated by $\sigma$, $K_\gamma$, $[K_\gamma]$ (resp. $E_i$, $F_i$). Note that if $\{\gamma(r);\,1\le r\le\dim\EE\}$ is a $\Z$-basis of $\Gamma$, then
\[
\{\sigma^{a(0)}\prod_{1\le r\le\dim\EE}K_{\gamma(r)}^{a(r)}[K_{\gamma(r)}]^{b(r)};\,a(0)\in\{0,1\};\,a(r),\,b(r)\in\Z_{\ge 0}\}
\]
is an $A$-basis of $T^\sigma$. Also there is an $A$-module isomorphism
\[
U_A^+\otimes_AT_A^\sigma\otimes_AU_A^-\xrightarrow{\sim}U_A^\sigma,\qquad X\otimes Z\otimes Y\mapsto XZY.
\]

Denote by $\C_1$ the field $\C$ regarded as a left $A$-module in which $q$ acts as 1. Define the $\C$-algebras $U_\C^\sigma=U_A^\sigma\otimes_A\C_1$, $T_\C^\sigma=T_A^\sigma\otimes_A\C_1$, $U_\C^+=U_A^+\otimes_A\C_1$, $U_\C^-=U_A^-\otimes_A\C_1$. There is a $\C$-linear isomorphism
\[
U_\C^+\otimes_\C T_\C^\sigma\otimes_\C U_\C^-\xrightarrow{\sim}U_\C^\sigma,\qquad X\otimes Z\otimes Y\mapsto XZY.
\]
Define
\[
{}'U_\C^\sigma=U_\C^\sigma/V,\qquad V=\langle K_\gamma\otimes_A1-1\otimes_A1;\,\gamma\in\Gamma\rangle;
\]
$V$ is a 2-sided ideal in $U_\C^\sigma$. Define $\pi_1:U_A^\sigma\to\,{}'U_\C^\sigma$ by $\pi_1(X)=X\otimes_A1+V$. Denote the images of $\sigma$, $[K_\gamma]$, $E_i$, $F_i$ under $\pi_1$ by $\sigma$, $H_\gamma$, $E_i$, $F_i$. Recall that an element $\nu\in\EE$ was identified with $H_\nu\in\HH=\EE^\ast$ by $\mu(H_\nu)=(\mu,\nu)$ for all $\mu\in\EE$. Then there is a unique Lie $\C$-superalgebra homomorphism $\wt\chi:\wt{G}(\EE,\Pi,p)\to\,{}'U_\C^\sigma$ with $\wt\chi(H_\gamma)=H_\gamma$ ($\gamma\in\Gamma$), $\wt\chi(E_i)=E_i$, $\wt\chi(F_i)=F_i$ ($0\le i\le n'{}'$). Define an admissible Lie superalgebra $G^\Gamma=G^\Gamma(\EE,\Pi,p)$ in $I(\EE,\Pi,p)$ by $\wt{G}(\EE,\Pi,p)/\ker\wt\chi$. Denote by $\chi:G^\Gamma\to\,{}'U_\C^\sigma$ the Lie superalgebra monomorphism obtained from $\chi$. Let $U(G^\Gamma)$ be the universal enveloping superalgebra of $G^\Gamma$. Denote by $\Xi:U(G^\Gamma)\to\,{}'U_\C^\sigma$ the surjection with $\Xi|G^\Gamma=\chi$. Then the extension $U(G^\Gamma)^\sigma$ of $U(G^\Gamma)$ by $\sigma$ has a Hopf $\C$-algebra structure, the extension $\Xi^\sigma$ of $\Xi$ by $\sigma$ is a Hopf $\C$-algebra surjection, which in fact is an isomorphism (\cite[Lemma 6.6.1]{y99}). In conclusion, the specialization of $U_A^\sigma$ at $q=1$, ${}'U_\C^\sigma$, is $U(G^\Gamma)^\sigma$, where $G^\Gamma=\wt{G}(\EE,\Pi,p)/\ker\wt\chi\in I(\EE,\Pi,p)$.
\end{rem}

For datum $(\EE,\Pi,p)$ of affine $(\Aa)$ type, fix a lattice datum $(\EE,\Pi,p,\Gamma)$ by\\
$\Gamma=\Z\ove_1\oplus\dots\oplus\Z\ove_{n'}\oplus\Z\delta\oplus\Z\Lambda_0$ if $(\Aa)^b$, namely $\sum_{1\le i\le n'}\ovd_i=0$;\\
$\Gamma=P\oplus\Z\Lambda_0$, $P=\Z\alpha_0\oplus\dots\oplus\Z\alpha_{n'{}'}$ if $(\Aa)^g$, namely $\sum_{1\le i\le n'}\ovd_i\not=0$,\\
and say that $(\EE,\Pi,p,\Gamma)$ is of {\em affine} $(\Aa)$-{\em type}.

For $\alpha\in P$, put $(U_q^\sigma)_\alpha=\{X\in U_q^\sigma;\,K_\gamma XK_\gamma^{-1}=q^{(\alpha,\gamma)}X,\quad\forall\gamma\in\Gamma\}$.

For $X_\alpha\in(U_q^\sigma)_\alpha$, $X_\beta\in(U_q^\sigma)_\beta$, put
\[
\br X_\alpha,X_\beta\ebr=X_\alpha X_\beta-(-1)^{p(X_\alpha)p(X_\beta)}q^{-(\alpha,\beta)}X_\beta X_\alpha.
\]
Recall that $[X,Y]=XY-(-1)^{p(X)p(Y)}YX$ for homogeneous $X$, $Y$.
\begin{prop}\label{P6.1} 
$($\cite[Theorem 6.8.2]{y99}$)$. Let $(\EE,\Pi,p,\Gamma)$ be a lattice datum of affine $(\Aa)$-type. Then the $\C(q)$-algebra $U_q(\EE,\Pi,p,\Gamma)$ can also be defined by generators $K_\gamma$ $(\gamma\in\Gamma)$, $E_i$, $F_i$ $(0\le i\le n'{}')$, parities $p(K_\gamma)=0$, $p(E_i)=p(F_i)=p(\alpha_i)=0$ $(i\not=0,\,m)$, $p(E_0)=p(F_0)=1=p(E_m)=p(F_m)$, and the quantum Serre relations $(\QS1)$, $(\QS2)$,
$(\QS3)$, $(\QS4)(a)$ and $(\QS5)(a)$, $1\le a\le 4$, where $(\QS5)(a)$ are obtained from $(\QS4)(a)$ on replacing $E_j$ by $F_j$, and$:$\\
$(\QS4)(1)\quad[E_i,E_j]=0$ if $\dist(\alpha_i,\alpha_j)\ge 2$ $($i.e., $i\not=j$ and $(\alpha_i,\alpha_j)=0);$\\
$(\QS4)(2)\quad[E_0,E_0]=0=[E_m,E_m]$, i.e. $E_0^2=0=E_m^2;$\\
$(\QS4)(3)\quad\br E_i,\br E_i,E_{i\pm 1}\ebr\ebr=0,$ $0\not=i\not=m;$ i.e. $E_i^2E_{i\pm 1}-(q+q^{-1})E_iE_{i\pm 1}E_i+E_{i\pm 1}E_i^2=0;$
$(\QS4)(4)$ $\quad [ \br\br E_{m-1},E_m\ebr,E_{m+1}\ebr,E_m]=0,\,\,\,{equivalently}\,\,\, [\br\br E_{m+1},E_m\ebr,E_{m-1}\ebr,E_m]=0;$\\
$(\QS4)(4')$ $\quad [ \br\br E_{n'{}'},E_0\ebr,E_1\ebr,E_0]=0,\,\,\,{equivalently}\,\,\, [ \br\br E_1,E_0\ebr,E_{n'{}'}\ebr,E_0]=0;$\\
$(\QS5)(1)-(\QS5)(4'):$ same as $(\QS4)(1)-(\QS4)(4')$ with $F_j$ replacing $E_j$.
\end{prop}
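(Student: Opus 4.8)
The statement (Proposition \ref{P6.1}, which is \cite[Theorem 6.8.2]{y99}) asserts that the Drinfeld-type quotient $U_q^\sigma(\EE,\Pi,p,\Gamma)$ defined abstractly via the subspaces $I^\pm$ has the explicit presentation by the quantum Serre relations $(\QS1)$–$(\QS5)$. The plan is to proceed exactly as Yamane does, in two directions. First I would show that each of the relations $(\QS4)(a)$, $(\QS5)(a)$ ($1\le a\le 4$) actually holds in $U_q^\sigma$; equivalently, that the corresponding elements of $\wt U_q^+$ (resp. $\wt U_q^-$) lie in the defining ideals $I^+$ (resp. $I^-$). By the $q$-analogue of the bitransitivity/Gabber–Kac argument, an element $u\in\wt U_q^+$ lies in $I^+$ iff $[F_j,u]=0$ in $\wt U_q^\sigma$ for all $j$ (using $(\QS3)$ and $(\QS2)$ to move the $K_\gamma$'s past), which reduces to a finite bracket computation in the rank $\le 4$ sub-configurations of the Dynkin diagram of type $(\Aa)$. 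So the first block of work is: for the length-two, length-three and length-four quantum Serre elements attached respectively to disconnected pairs, to an $A_2$-string through a white node, and to the two gray nodes $\alpha_m$ and $\alpha_0$, verify $[F_j,-]=0$. The quartic relations $(\QS4)(4)$ and $(\QS4)(4')$ are the genuinely new super phenomena (noted already in the introduction) and require using $E_m^2=0=E_0^2$; I would compute $[F_j,\br\br E_{m-1},E_m\ebr,E_{m+1}\ebr E_m \pm (\cdots)]$ node by node for $j=m-1,m,m+1$ and check all terms cancel, then invoke the $\alpha_0\leftrightarrow$ diagram symmetry of $(\Aa)$ for $(4')$.

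Conversely — and this is the substantive half — I would show that the algebra $U$ defined by generators and the relations $(\QS1)$–$(\QS5)$ is \emph{not smaller} than $U_q^\sigma$, i.e. the surjection $U\twoheadrightarrow U_q^\sigma$ induced by the first step is injective. The standard route is a triangular-decomposition argument: let $U^+$ be the subalgebra of $U$ generated by the $E_i$ and $\bar I^+\subset\wt U_q^+$ the kernel of $\wt U_q^+\twoheadrightarrow U^+$; one must prove $\bar I^+=I^+$, i.e. that the explicit quantum Serre relations generate the whole Drinfeld radical. Here I would follow Yamane's strategy of using the specialization at $q=1$ (set up in the Remark of \S6): the associated graded / $q\to1$ limit of $U$ maps onto $U(G^\#_{\AI})^\sigma$, and by Theorem \ref{T5.1} the classical algebra $G^\#_{\AI}$ is presented by exactly the Serre relations $(\mathrm S1)$–$(\mathrm S5)$ obtained from $(\QS1)$–$(\QS5)$ at $q=1$. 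A dimension/flatness comparison — the $A$-form $U_A^\sigma$ is free over $A=\C[q,q^{-1}]$ with the same PBW-type basis in every specialization, as recorded in the Remark — then forces the $q$-generic fibre $U$ to have the same size as $U_q^\sigma$, giving injectivity. Equivalently, one exhibits a PBW-type spanning set for $U^+$ indexed by $\Phi[G]\cap P_+$ (with multiplicities matching $\dim_{\C}G_\alpha$, which is $1$ off $\Z\delta$ and controlled on $\Z\delta$ by the choice of $\Gamma$), and shows it is linearly independent by pairing against $U_q^-$ through the nondegenerate form, or by evaluating on the tensor modules $V^{\otimes d}$.

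The order I would carry this out: (i) record the $q\to1$ Remark and Theorem \ref{T5.1} as the classical input; (ii) do the finite bracket checks that $(\QS4),(\QS5)$ hold in $U_q^\sigma$, treating the new quartic relations with care via $E_m^2=E_0^2=0$; (iii) build the PBW spanning set for $U=U(\QS\text{-relations})$, using the braid/Lusztig-type automorphisms available for the affine datum to produce root vectors $E_\beta$ for $\beta\in\Phi[G]\cap P_+$; (iv) prove minimality of the relation set by the $q=1$ specialization plus freeness of $U_A^\sigma$, concluding $U\isom U_q^\sigma$. The main obstacle is step (iii)–(iv): constructing the PBW basis and proving the quantum Serre relations suffice in the \emph{affine super} case, where the gray node $\alpha_0$ and the quartic relation $(\QS4)(4')$ have no counterpart in the non-super affine theory of \cite{cp96}; controlling the imaginary root spaces $(U_q^\sigma)_{k\delta}$ — whose dimensions depend on whether we are in case $(\Aa)^g$ or $(\Aa)^b$ and on the lattice $\Gamma$ — is the delicate point, and is exactly where one must lean on Yamane's \cite[\S6]{y99} rather than on a short self-contained computation. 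For the purposes of this paper I would therefore present (i)–(ii) in detail and cite \cite[Theorem 6.8.2]{y99} for (iii)–(iv), since the relations $(\QS4)(4)$ and $(\QS5)(4)$ are the only genuinely new relations we must later verify for our operators on $V^{\otimes d}$.
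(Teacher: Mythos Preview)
The paper does not give a proof of Proposition~\ref{P6.1}: it is simply quoted from \cite[Theorem~6.8.2]{y99} and used as a black box, with only the remark afterwards that the case $m=n$ requires additional relations. Your proposal is therefore not so much a different proof as an (accurate) sketch of what Yamane's own argument involves, and in that sense it goes well beyond what the present paper attempts. Your outline --- verify that the $(\QS4)/(\QS5)$ elements lie in $I^\pm$ by the $q$-Gabber--Kac criterion, then control the size of the presented algebra via a PBW basis and the $q\to 1$ specialization of the Remark in \S6 together with Theorem~\ref{T5.1} --- is essentially the shape of Yamane's proof, and you are right that the imaginary root spaces in the $(\Aa)^b$ case are the delicate point. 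For the purposes of this paper, however, the intended ``proof'' is just the citation; the relations are recorded here only so that they can be checked for the operators $\rho_d(E_0)$, $\rho_d(F_0)$ in \S\S\ref{13}--\ref{15}.
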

We assume in Proposition \ref{P6.1} that $m\not=n$. When $m=n$ there are additional relations (see \cite[Theorem 8.4.3]{y99}). Note that $(\QS4)(1)$ asserts $E_0E_m+E_mE_0=0$, and $E_iE_j-E_jE_i=0$ in the other cases.

\section{Hecke algebra}\label{7}

Next we proceed to introduce an action of the affine Iwahori-Hecke algebra, denoted $H^a_d(q^2)$, on $V^{\otimes d}$, via the theory of $R$-operators, developed from Drinfeld's and Jimbo's solution of the quantum Yang-Baxter equation.

By the Hecke algebra in the theory of admissible representations one usually means the convolution algebra $H$ of complex valued compactly supported measures on a local reductive group. It suffices to consider here $G(F)$, the group of $F$-points of a reductive connected group $G$ over $F$, $G=\GL(d)$ in our case, where $F$ is a local non-Archimedean field. Fixing a Haar measure $dg$, and noting that a measure in $H$ has the form $f\,dg$ where $f:G(F)\to\C$ is compactly supported and smooth (biinvariant under a compact open subgroup $K'$ of $G(F)$), one identifies $H$ with $\cup_{K'}C_c(K'\bs G(F)/K')$. The {\em spherical Hecke algebra} $H_K=C_c(K\bs G(F)/K)$, where $K=G(\OO)$ is the hyperspecial maximal compact subgroup of our $G(F)$, $\OO$ being the ring of integers of $F$, is commutative, and can be studied by means of the Satake transform. When $K'$ is taken to be an Iwahori subgroup $I\subset G(\OO)$, the pullback of $B(\F_q)$, the upper triangular Borel subgroup of $G(\F_q)$, under the reduction $G(\OO)\to G(\F_q)$, obtained from the reduction $\OO\to\F_q=\OO/(\upi)$ modulo the maximal ideal $(\upi)$ in the local ring $\OO$, the convolution algebra $H_I=C_c(I\bs G(F)/I)$ is called the {\em affine Iwahori-Hecke algebra}. The structure of this algebra, of great importance in the study of admissible representations of the group of points over a local non-Archimedean field $F$ of a reductive connected group $G$, was studied by Iwahori and Matsumoto \cite{im65}, who gave a presentation in terms of generators and relations. This presentation depends on the (residual) cardinality $q$ of $\F_q$, but the isomorphism class of the algebra $\H_I$ need not be, as specified in Proposition \ref{P7.2}.

\subsection{Bernstein presentation}
A useful presentation, better reflecting the structure of the group $G(F)$, was given by J. Bernstein; see \cite{hkp10} for a clear exposition. Bernstein's presentation exhibits $H_I$ as composed from the {\em finite Iwahori-Hecke algebra} $C_c(I\bs K/I)$, $K=G(\OO)$, which can be presented as generated  by $T_1,\dots,\,T_d$, subject to the relations: (1)  $T_iT_j=T_jT_i$ if $|i-j|>1$ (commutation relations); (2) $T_iT_{i+1}T_i=T_{i+1}T_iT_{i+1}$  (braid relations); (3) $(T_i+1)(T_i-q^2)=0$ (quadratic relations), and the commutative spherical algebra $R=C_c(A\cap I\bs A)$, where $A$ denotes the diagonal subgroup of $G(F)$, which is generated by the commuting elements $\theta_\lambda$, $\lambda\in P^{\vee}\simeq A/A\cap I$. This is the same as the lattice  $P^{\vee}$ of section \ref{2} in our case of $G=\GL(d)$. The $T_i$ can be viewed as normalized characteristic functions of the double cosets $Is_iI$, where $s_i$ ($1\le i<d$) are transpositions generating the Weyl group $W=I\bs K/I$, which is the symmetric group on $d$ letters: $=S_d$, in our case of $\GL(d)$, where the $s_i$ are the reflections $(i,i+1)$. Bernstein observed that to determine $H$, the commutation relations between the $T_i$ and the $\theta_\lambda$ need to be specified. Bernstein's presentation of $H_I$ consists then of generators $T_i=T_{\alpha_i}$, where $\alpha_i$ range over a base of simple roots, in our case $1\le i<d$, satisfying the commutation relations, the braid relations, and the quadratic relations, and the commuting generators $\theta_\lambda$, $\lambda\in P$, satisfying the Bernstein relations
\[
\theta_\lambda T_i-T_i\theta_{s_i\lambda}=(q^2-1)\frac{\theta_\lambda-\theta_{s_i\lambda}}{1-\theta_{-\alpha_i^\vee}}.
\]
Here $s_i$ are the reflections in the Weyl group associated with the coroots $\alpha_i^\vee\in P$. Note that the quadratic relations imply $T_i^{-1}=q^{-2}T_i+(q^{-2}-1)$, namely that the $T_i$ are invertible. 

In our case of $\GL(d)$, the lattice $P\simeq\Z^d$ is spanned by $\varep_i=(0,\dots,0,1,0,\dots,0)$, 1 in the $i$th place, $1\le i\le d$. The simple coroots are $\alpha_i^\vee=\varep_i-\varep_{i+1}$, and the corresponding reflections $s_i$ interchange $\varep_i$ and $\varep_{i+1}$, and fix the other $\varep_j$. Thus the lattice $\{\theta_\lambda;\,\lambda\in P\}$ is generated by $y_i=\theta_{-\varep_i}$. To write the Bernstein relation for $\lambda=-\varep_i$, note that $s_i\lambda=-\varep_{i+1}$, so $y_i=\theta_\lambda$, $\theta_{s_i(\lambda)}=y_{i+1}$, $\theta_{-\alpha_i^\vee}=y_iy_{i+1}^{-1}$. Hence the relation in this case is
\[
y_iT_i-T_iy_{i+1}=(q^2-1)\frac{y_i-y_{i+1}}{1-y_iy_{i+1}^{-1}}=-(q^2-1)y_{i+1}.
\]
So
\[
y_iT_i=(T_i-(q^2-1))y_{i+1}=T_i^{-1}q^2y_{i+1},\quad\mathrm{or}\quad T_iy_iT_i=q^2y_{i+1}.
\]
Normalizing $T_i$ by putting $\wh T_i=q^{-1}T_i$, the relation becomes $\wh T_iy_i\wh T_i=y_{i+1}$. In summary:
\begin{dfn}\label{D7.1} 
Fix $d\ge 1$ and $q\in\C^\times$ which is not a root of $1$. The {\em affine Iwahori-Hecke} $($in short: {\em Hecke}$)$ {\em algebra} $H^a_d(q^2)$ is the associative algebra over $\C(q)$ with $1$ generated by $\wh T_i$ $(1\le i<d)$ and $y_j^{\pm 1}$ $(1\le j\le d)$, subject to the relations
\[
\wh T_i\wh T_{i+1}\wh T_i=\wh T_{i+1}\wh T_i\wh T_{i+1};\qquad \wh T_i\wh T_j=\wh T_j\wh T_i \quad(|i-j|>1);
\]
\[
(\wh T_i+q^{-1})(\wh T_i-q)=0, \qquad\mathrm{or}\,\,\,\wh T_i^2-(q-q^{-1})\wh T_i-1=0;
\]
\[
y_jy_j^{-1}=1=y_j^{-1}y_j;\qquad y_jy_k=y_ky_j;\qquad y_j\wh T_i=\wh T_iy_j\,\,\,\mathrm{if}\,\,\,j\not=i,\,i+1;
\]
\[
\wh T_iy_i\wh T_i=y_{i+1}.
\]
\end{dfn}
The associative subalgebra with $1$ over $\Q(q)$ generated by the $\wh T_i$ $(1\le i<d)$ is called the {\em finite Hecke algebra} $H_d(q^2)$. Then we have
\begin{prop}\label{P7.1}
$H_d(q^2)\into H^a_d(q^2)$, and $\C(q)[y_1^{\pm 1},\dots,y_d^{\pm 1}]\otimes H_d(q^2)\to H^a_d(q^2)$ is an isomorphism of vector spaces.
\end{prop}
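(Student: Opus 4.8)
The plan is to build the two maps directly from the presentations and check they are mutually inverse on the level of vector spaces, using the PBW-type basis theorem for affine Hecke algebras. First I would establish the injectivity $H_d(q^2)\into H_d^a(q^2)$. Since $H_d(q^2)$ is by definition the subalgebra of $H_d^a(q^2)$ generated by the $\wh T_i$ ($1\le i<d$), the only thing to verify is that the abstract finite Hecke algebra on generators $\wh T_i$ with the braid, commutation and quadratic relations does not collapse when mapped into $H_d^a(q^2)$; equivalently, that the natural surjection from the abstract $H_d(q^2)$ onto this subalgebra is an isomorphism. This follows because the affine Hecke algebra $H_d^a(q^2)$ is a free module over the Laurent-polynomial ring $\C(q)[y_1^{\pm1},\dots,y_d^{\pm1}]$ with basis $\{\wh T_w: w\in S_d\}$ (Bernstein's basis theorem; see \cite{hkp10}), so the $\wh T_w$ are linearly independent over $\C(q)$, hence the span of the $\wh T_i$ is a copy of the standard $d!$-dimensional (over $\C(q)$) finite Hecke algebra.

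Next I would define the multiplication map
\[
\mu:\ \C(q)[y_1^{\pm1},\dots,y_d^{\pm1}]\otimes_{\C(q)} H_d(q^2)\longrightarrow H_d^a(q^2),\qquad P\otimes h\mapsto P\cdot h,
\]
where $P$ is a Laurent polynomial in the $y_j$ viewed inside $H_d^a(q^2)$ via the relations of Definition \ref{D7.1}, and $h$ a word in the $\wh T_i$. This is well defined and $\C(q)$-linear. To see it is surjective, take an arbitrary word in the generators $\wh T_i^{\pm1}$, $y_j^{\pm1}$ of $H_d^a(q^2)$ and push all the $y_j$'s to the left past the $\wh T_i$'s: the relations $y_j\wh T_i=\wh T_i y_j$ for $j\ne i,i+1$ move them freely, while $\wh T_i y_i\wh T_i=y_{i+1}$ (together with the quadratic relation, which gives $\wh T_i^{-1}=\wh T_i-(q-q^{-1})$, so that $y_i\wh T_i=\wh T_i^{-1}y_{i+1}\in \wh T_i y_{i+1}\C(q)[y_{i+1}^{\pm1}]+y_{i+1}\C(q)$ and similarly $y_{i+1}\wh T_i$) rewrites a $y$-then-$\wh T$ product as a $\wh T$-then-$y$ product at the cost of lower-degree correction terms; an induction on word length and on the degree of the $y$-part shows every element lies in the image of $\mu$. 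For injectivity, I would again invoke Bernstein's basis theorem: the target $H_d^a(q^2)$ is free of rank $d!$ over $R=\C(q)[y_1^{\pm1},\dots,y_d^{\pm1}]$ on the basis $\{\wh T_w\}_{w\in S_d}$, while the source is $R\otimes_{\C(q)} H_d(q^2)$, which (by the injectivity statement just proved, identifying $H_d(q^2)$ with $\bigoplus_{w}\C(q)\wh T_w$) is also free of rank $d!$ over $R$ on the same elements $1\otimes \wh T_w$; the map $\mu$ sends $1\otimes\wh T_w$ to $\wh T_w$, hence carries an $R$-basis to an $R$-basis and is an isomorphism of $R$-modules, in particular of $\C(q)$-vector spaces.

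The main obstacle is the linear-independence (injectivity) half: asserting that the spanning set $\{y^\alpha \wh T_w\}$ is actually a basis is exactly the content of the Bernstein PBW theorem for affine Iwahori-Hecke algebras, which is nontrivial and is the reason the Bernstein presentation was recalled in Section \ref{7}. If one did not wish to quote it as a black box, one would prove it by exhibiting a faithful module — for instance the action on $\C(q)[y_1^{\pm1},\dots,y_d^{\pm1}]$ by Demazure-Lusztig operators $\wh T_i\mapsto q^{-1}\big(q^2 s_i + (q^2-1)(1-y_iy_{i+1}^{-1})^{-1}(s_i-1)\big)$ (suitably normalized), verifying it satisfies the relations of Definition \ref{D7.1}, and checking that the images of the monomials $y^\alpha\wh T_w$ act as linearly independent operators. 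Everything else — well-definedness of $\mu$, surjectivity, the straightening of words — is routine bookkeeping with the four families of relations. Since the present paper is content to cite \cite{hkp10} and \cite{im65} for the structure of $H_d^a(q^2)$, I would likewise cite the basis theorem and keep the proof to the two short paragraphs above.
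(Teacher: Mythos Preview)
Your argument is correct and follows the standard route via Bernstein's PBW/basis theorem for affine Iwahori--Hecke algebras. Note, however, that the paper does not actually supply a proof of this proposition: it is stated as a known structural fact, with the Bernstein presentation and references \cite{im65}, \cite{hkp10} recalled immediately beforehand precisely so that it can be quoted without argument. So there is no ``paper's own proof'' to compare against; your write-up is simply an explicit unpacking of the result the paper takes for granted, and the black box you invoke (freeness of $H_d^a(q^2)$ over $\C(q)[y_1^{\pm1},\dots,y_d^{\pm1}]$ with basis $\{\wh T_w\}$) is exactly the content encoded in the Bernstein presentation discussed in \S\ref{7}.
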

By Maschke theorem, the group algebra of a finite group $W$ over a field $F$ is semisimple precisely when the characteristic of $F$ does not divide the order of $W$. Consider the Hecke algebra $H(q^2)$ defined over the field $\Q(q)$ (instead of $\C(q)$), associated with any Coxeter system $(W,S)$ of any type, not just type $A$, where the Weyl group $W$ is $S_d$. By \cite{gu89}, $H(q^2)$ is semisimple if and only if $q^2$ is not a root of the Poincar\'e polynomial $P(q)=\sum_{w\in W}q^{\ell(w)}$, in particular when $q^2$ is not a root of 1. Here $\ell:W\to\Z_{\ge 0}$ is the length function on $(W,S)$. 
\begin{prop}\label{P7.2}
The Hecke algebra $H(q^2)$ and the group algebra $\Q(q)W$ of the Weyl group $W$ over the field $\Q(q)$ are isomorphic whenever $H(q^2)$ is semisimple.
\end{prop}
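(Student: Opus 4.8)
The plan is a deformation (specialization) argument. Since $q$ is an indeterminate, $q^{2}$ is transcendental over $\Q$ and so cannot be a root of the Poincar\'e polynomial; hence $H(q^{2})$ is automatically semisimple by \cite{gu89}, and the substance of the statement is the isomorphism with $\Q(q)W$. By Wedderburn--Artin a split semisimple algebra over a field $F$ is isomorphic to $\bigoplus_{i}M_{d_{i}}(F)$, hence is determined up to isomorphism by the multiset $\{d_{i}\}$ of dimensions of its simple modules. So it is enough to establish: (i) $H(q^{2})$ is \emph{split} semisimple over $\Q(q)$; (ii) $\Q(q)W$ is split semisimple over $\Q(q)$; and (iii) there is a dimension-preserving bijection between the simple modules of $H(q^{2})$ and those of $\Q(q)W$.

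Part (ii) is the classical fact that every irreducible representation of a finite Weyl group is realizable over $\Q$, so that $\Q W$ is split semisimple and $\Q(q)W=\Q W\otimes_{\Q}\Q(q)$ is split semisimple over $\Q(q)$ with the same simple dimensions; for $W=S_{d}$ this is Young's theory. For (i) and (iii) I would work over the discrete valuation ring $A=\Q[q]_{(q-1)}$, with residue field $\Q$ and fraction field $\Q(q)$, and introduce the Iwahori--Hecke algebra $\mathcal{H}_{A}$ of $(W,S)$ over $A$ in its standard basis $\{T_{w}:w\in W\}$: it is free of rank $|W|$ over $A$, with $\mathcal{H}_{A}\otimes_{A}\Q(q)\cong H(q^{2})$ and $\mathcal{H}_{A}\otimes_{A}\Q\cong\Q W$ under the specialization $q\mapsto1$. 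Since the closed fibre $\Q W$ is separable over the perfect field $\Q$ and $\mathcal{H}_{A}$ is $A$-free of finite rank over the local ring $A$, the $A$-algebra $\mathcal{H}_{A}$ is separable over $A$, so its centre $Z(\mathcal{H}_{A})$ is \'etale over $A$; as $Z(\Q W)\cong\Q^{\,r}$ is totally split, $Z(\mathcal{H}_{A})\cong A^{\,r}$, and $\mathcal{H}_{A}$ decomposes as a product of Azumaya $A$-algebras $\mathcal{H}_{A}=\prod_{i=1}^{r}\mathcal{H}_{A}^{(i)}$ with $\mathcal{H}_{A}^{(i)}\otimes_{A}\Q\cong M_{f_{i}}(\Q)$. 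Each $\mathcal{H}_{A}^{(i)}$, being Azumaya over the discrete valuation ring $A$ with split closed fibre (so its class in $\mathrm{Br}(A)\hookrightarrow\mathrm{Br}(\Q)$ is trivial), is isomorphic to $M_{f_{i}}(A)$, whence $\mathcal{H}_{A}^{(i)}\otimes_{A}\Q(q)\cong M_{f_{i}}(\Q(q))$ and $H(q^{2})\cong\bigoplus_{i}M_{f_{i}}(\Q(q))$ with the same $(f_{i})$ as $\Q W$. This gives (i) and (iii), and with (ii) yields $H(q^{2})\cong\Q(q)W$; this is an instance of the Tits deformation theorem (see e.g.\ Curtis--Reiner, \emph{Methods of Representation Theory}~I, \S68, or Geck--Pfeiffer, Ch.~7). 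For $W=S_{d}$ one can avoid the machinery entirely: over $\Q(q)$ the simple $H_{d}(q^{2})$-modules are the absolutely irreducible $q$-Specht modules $S^{\lambda}$, $\lambda\vdash d$, with $\dim_{\Q(q)}S^{\lambda}$ equal to $\dim S^{\lambda}$ for the ordinary Specht module, so $H_{d}(q^{2})\cong\bigoplus_{\lambda\vdash d}M_{f^{\lambda}}(\Q(q))\cong\Q(q)S_{d}$ by inspection.

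The only real obstacle is the splitting encoded in (i) and (iii). Semisimplicity of $H(q^{2})$ over $\Q(q)$ is essentially free; the point is that producing an honest algebra isomorphism with $\Q(q)W$ --- rather than merely a Morita equivalence, or an isomorphism only after extension of scalars to $\overline{\Q(q)}$ --- forces one to control the Brauer classes of the Wedderburn blocks over $\Q(q)$ itself, which is exactly what the separability-plus-\'etale-centre step does, and it rests on the rationality of the representations of $W$. For general Coxeter types this rationality can fail --- already for the non-crystallographic groups $I_{2}(m)$ ($m\neq2,3,4,6$), $H_{3}$, $H_{4}$, where $\Q W$ itself is not split, and, for the Hecke algebra, for certain characters in types $E_{7}$ and $E_{8}$ --- so there $\Q(q)$ must be enlarged accordingly; but for $W=S_{d}$, the only case used in the sequel, every representation in sight is defined over $\Q$ and the argument goes through verbatim.
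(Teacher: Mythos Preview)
The paper does not supply its own proof here; the remark immediately following the proposition simply records that the result is in Lusztig \cite{l81}, that for indeterminate $q$ it already appears in Bourbaki \cite[Ex.~26--27, p.~56]{b68}, and that type~$A$ is treated in \cite[Ch.~13]{g93}. Your sketch is exactly the Tits deformation argument underlying those references, so you are not taking a different route so much as unpacking what the paper cites.

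One step in your sketch is not quite right as written: from ``$Z(\Q W)\cong\Q^{\,r}$'' you conclude ``$Z(\mathcal H_A)\cong A^{\,r}$'', but over the \emph{non-Henselian} DVR $A=\Q[q]_{(q-1)}$ a finite \'etale $A$-algebra with split closed fibre need not itself be split (think of $\Z_{(5)}[i]$ over $\Z_{(5)}$: the closed fibre is $\F_5\times\F_5$, yet $\Z_{(5)}[i]$ is a domain). The standard remedy is to pass to the Henselization or completion of $A$, where idempotents do lift, obtain the product-of-matrix-algebras decomposition there, and then check that the resulting split-semisimple structure on the generic fibre descends to $\Q(q)$ --- which in type~$A$ is exactly your $q$-Specht argument, and in general is the content of the Benson--Curtis--Lusztig theorem. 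Since you explicitly invoke the Tits deformation theorem from Curtis--Reiner and Geck--Pfeiffer, the proposal is fine as a proof-by-reference with commentary; just be aware that the Azumaya/\'etale-centre sketch, taken literally over $\Q[q]_{(q-1)}$, has this gap.

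A small remark on your closing caveat: the paper's convention of writing the Hecke parameter as $q^{2}$ while working over $\Q(q)$ is precisely what absorbs the square root needed for the two exceptional characters in types $E_7$ and $E_8$, so the proposition as stated does hold over $\Q(q)$ for all finite Weyl groups. Your reservation is genuinely relevant only for the non-crystallographic Coxeter groups $H_3$, $H_4$, $I_2(m)$ ($m\neq 2,3,4,6$), which the paper's wording formally includes but never uses.
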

\rem This is proven in \cite{l81}. If $q$ is indeterminate, the isomorphism $H(q^2)\simeq\Q(q)W$ is already in \cite[Ex. 26, 27, p. 56]{b68}. See \cite[chapter 13]{g93} for an exposition in type A. Note that the $f$ in the proof of \cite[Lemma 2.12]{mo03} does not preserve the braid relations.
\section{Parabolic induction}\label{8}
The natural embedding $S_d\times S_b\into S_{d+b}$, where $S_d$ is the symmetric group on the letters $t_1,\dots,t_d$, and $S_b$ on $t_{d+1},\dots,t_{d+b}$, extends to the functor of (normalized) induction of admissible representations. Considering only the Iwahori-unramified case, we have:

\begin{prop}\label{P8.1}
There exists a unique homomorphism $\phi(d,b):\,H_d^a(q^2)\otimes H_b^a(q^2)\to H_{d+b}^a(q^2)$ of Hecke algebras which maps $\wh T_i\otimes 1\mapsto\wh T_i$, $y_j\otimes 1\mapsto y_j$ $(1\le i<d$, $1\le j\le d)$, $1\otimes\wh T_i\mapsto\wh T_{i+d}$, $1\otimes y_j\mapsto y_{j+d}$ $(1\le i<b$, $1\le j\le b)$. The restriction of $\phi(d,b)$ to $H_d(q^2)\otimes H_b(q^2)$ defines a homomorphism into $H_{d+b}(q^2)$.
\end{prop}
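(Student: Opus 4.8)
The plan is to construct $\phi(d,b)$ directly on generators and check that all defining relations of $H_d^a(q^2)\otimes H_b^a(q^2)$ are respected, which gives existence; uniqueness is then immediate since the $\wh T_i\otimes 1$, $y_j\otimes 1$, $1\otimes\wh T_i$, $1\otimes y_j$ generate the source algebra. Concretely, I would define $\phi(d,b)$ on the listed generators as in the statement and verify: (i) inside the first tensor factor the images $\wh T_i$ ($1\le i<d$) and $y_j^{\pm1}$ ($1\le j\le d$) satisfy the braid, commutation, quadratic, and Bernstein-type relations of Definition \ref{D7.1} — but this is automatic because they are literally the standard generators of $H_{d+b}^a(q^2)$ with the same indices; (ii) inside the second tensor factor the images $\wh T_{i+d}$ ($1\le i<b$) and $y_{j+d}^{\pm1}$ ($1\le j\le b$) satisfy the same relations — again automatic, since shifting all indices by $d$ carries the defining relations of $H_b^a(q^2)$ to a subset of the defining relations of $H_{d+b}^a(q^2)$; (iii) every generator coming from the first factor commutes with every generator coming from the second factor.

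The only genuine content is step (iii). One must check $\wh T_i \wh T_{i'+d} = \wh T_{i'+d}\wh T_i$ for $1\le i<d$, $1\le i'<b$; $\wh T_i y_{j'+d} = y_{j'+d}\wh T_i$; $y_j \wh T_{i'+d} = \wh T_{i'+d} y_j$; and $y_j y_{j'+d} = y_{j'+d} y_j$; all for the appropriate index ranges. Each of these is a consequence of the far-commutation relations in Definition \ref{D7.1}: for $1\le i<d$ and $1\le i'<b$ we have $i' + d \ge d+1 > i+1$, so $|i-(i'+d)|>1$ and $\wh T_i\wh T_{i'+d}=\wh T_{i'+d}\wh T_i$; for the mixed $\wh T$–$y$ relations, $y_k$ commutes with $\wh T_\ell$ whenever $k\notin\{\ell,\ell+1\}$, and one checks the index inequalities $j\le d < i'+d$ and $j'+d \ge d+1 \notin\{i,i+1\}$ hold throughout the stated ranges; and the $y$–$y$ commutation is part of the defining relations unconditionally. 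So step (iii) reduces to bookkeeping with the inequalities $i<d$ and $1\le j\le d$ versus indices $\ge d+1$.

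For the last sentence of the proposition, note that the finite Hecke algebra $H_d(q^2)$ is by definition the subalgebra of $H_d^a(q^2)$ generated by the $\wh T_i$ alone (Proposition \ref{P7.1}), and $\phi(d,b)$ sends $\wh T_i\otimes 1\mapsto\wh T_i$ and $1\otimes\wh T_i\mapsto\wh T_{i+d}$, all of which lie in the subalgebra of $H_{d+b}^a(q^2)$ generated by $\wh T_1,\dots,\wh T_{d+b-1}$, namely $H_{d+b}(q^2)$. Hence the restriction of $\phi(d,b)$ to $H_d(q^2)\otimes H_b(q^2)$ factors through $H_{d+b}(q^2)$, and it is still an algebra homomorphism because it is a restriction of one. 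No further relations need to be checked here since the finite Hecke relations are a subset of those already verified.

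I expect no serious obstacle: this proposition is a formal "Levi embedding" statement and the proof is entirely a matter of matching generators and relations under an index shift. The only point requiring a moment of care is making sure the index ranges in the far-commutation relations are satisfied in all the mixed cross-terms of step (iii); once the inequalities $i\le d-1$, $j\le d$, $i'+d\ge d+1$, $j'+d\ge d+1$ are written out this is transparent. (One could alternatively invoke Proposition \ref{P7.1} to present $H_d^a(q^2)$ as $\C(q)[y_1^{\pm1},\dots,y_d^{\pm1}]\otimes H_d(q^2)$ and build $\phi(d,b)$ compatibly on the polynomial and finite-Hecke parts separately, but the direct generators-and-relations check is the cleanest route.)
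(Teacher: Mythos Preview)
Your proposal is correct and is exactly the standard generators-and-relations verification one would expect; the paper itself states Proposition~\ref{P8.1} without proof, treating it as routine. Your index checks in step (iii) are accurate (in particular $i'+d-i\ge (d+1)-(d-1)=2$ for the $\wh T$--$\wh T$ cross-commutation), and the last sentence about the finite Hecke restriction is handled correctly.
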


Let $M_i$ be a right $H_{d_i}^a(q^2)$-module $(i=1,\,2)$. Then $M_1\otimes M_2$, their outer tensor product, is an $H_{d_1}^a(q^2)\otimes H_{d_2}^a(q^2)$-module. The induced $H_{d_1+d_2}^a(q^2)$-module $I^a(M_1,M_2)$, studied by Bernstein and Zelevinsky \cite{bz76}, \cite{bz77}, \cite{ze80}, also named the Zelevinsky tensor product of $M_1$ and $M_2$, is defined by
\[
I^a(M_1,M_2)=\ind_{H_{d_1}^a(q^2)\otimes H_{d_2}^a(q^2)}^{H_{d_1+d_2}^a(q^2)}(M_1\otimes M_2)=(M_1\otimes M_2)\otimes_{H_{d_1}^a(q^2)\otimes H_{d_2}(q^2)}H_{d_1+d_2}^a(q^2).
\]
Up to a canonical isomorphism this induction is associative, and satisfies the pentagon axion (for a product of four objects). The same holds with the superscript $a$ removed, for finite Hecke algebras.

Let $M$ be an $H_d^a(q^2)$-module. By $M|H_d(q^2)$ we mean $M$ regarded as an $H_d(q^2)$-module by restriction.

\begin{prop}\label{P8.2}
Let $M_i$ be a finite dimensional $H_{d_i}^a(q^2)$-module $(i=1,2)$. Then there is a natural isomorphism $I^a(M_1,M_2)|H_{d_1+d_2}(q^2)\simeq I(M_1|H_{d_1}(q^2),M_2|H_{d_2}(q^2))$.
\end{prop}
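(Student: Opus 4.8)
The plan is to deduce the statement from a single parabolic decomposition of the affine Hecke algebra $H^{a}_{d_1+d_2}(q^2)$ and then appeal to associativity of the tensor product. Throughout write $D=d_1+d_2$, $A^{a}=H^{a}_{d_1}(q^2)\otimes H^{a}_{d_2}(q^2)$, $A=H_{d_1}(q^2)\otimes H_{d_2}(q^2)$; let $\phi^{a}=\phi(d_1,d_2)\colon A^{a}\to H^{a}_{D}(q^2)$ be the homomorphism of Proposition~\ref{P8.1}, $\iota\colon A\hookrightarrow A^{a}$ the tensor product of the inclusions of Proposition~\ref{P7.1}, and $\phi\colon A\to H_{D}(q^2)$ the finite homomorphism of Proposition~\ref{P8.1}; its last sentence says that $\phi^{a}\circ\iota$ equals $\phi$ followed by $H_{D}(q^2)\hookrightarrow H^{a}_{D}(q^2)$.

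The main step is a lemma: the multiplication map
\[
\mu\colon A^{a}\otimes_{A}H_{D}(q^2)\longrightarrow H^{a}_{D}(q^2),\qquad x\otimes h\longmapsto\phi^{a}(x)\,h,
\]
is an isomorphism of $(A^{a},H_{D}(q^2))$-bimodules, where $A^{a}$ is a right $A$-module via $\iota$, $H_{D}(q^2)$ a left $A$-module via $\phi$ and a right $H_{D}(q^2)$-module by multiplication, and $A^{a}$ acts on the left of $H^{a}_{D}(q^2)$ via $\phi^{a}$. That $\mu$ is well defined and a bimodule map is immediate from $\phi^{a}\circ\iota=\phi$. To prove bijectivity I would first invoke the standard property of the finite Hecke algebra: identifying $A$ with the parabolic subalgebra $\langle\wh T_i:1\le i<D,\ i\neq d_1\rangle$ of $H_{D}(q^2)$ via $\phi$ — which is injective, since for $w\in S_{d_1}\times S_{d_2}$ written $w=uv$ with $u\in S_{d_1}$, $v\in S_{d_2}$ one has $\ell(w)=\ell(u)+\ell(v)$, hence $\wh T_w=\wh T_u\wh T_v$, so the $\wh T_w$ ($w\in S_{d_1}\times S_{d_2}$) are linearly independent — the module $H_{D}(q^2)$ is free as a left $\phi(A)$-module, namely $H_{D}(q^2)=\bigoplus_{c}\phi(A)\,\wh T_{w_c}$, the sum over the minimal-length representatives $w_c$ of the cosets $(S_{d_1}\times S_{d_2})\backslash S_{D}$ (using $\ell(w'w_c)=\ell(w')+\ell(w_c)$ for $w'\in S_{d_1}\times S_{d_2}$). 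Then $A^{a}\otimes_{A}H_{D}(q^2)\cong\bigoplus_{c}A^{a}\otimes\wh T_{w_c}$ is a free left $A^{a}$-module on $\{1\otimes\wh T_{w_c}\}$ with $\mu(1\otimes\wh T_{w_c})=\wh T_{w_c}$, so it is enough to show that $H^{a}_{D}(q^2)$ is a free left $A^{a}$-module (via $\phi^{a}$) on the same family. For that I would use the factorizations of Proposition~\ref{P7.1}: with $R=\C(q)[y_1^{\pm1},\dots,y_{D}^{\pm1}]$, multiplication gives $\C(q)$-linear isomorphisms $R\otimes_{\C(q)}H_{D}(q^2)\isom H^{a}_{D}(q^2)$ and $A^{a}\isom R\otimes_{\C(q)}A$ under which $\phi^{a}$ corresponds to $\mathrm{id}_R\otimes\phi$ (since $\phi^{a}$ carries the $y$-variables of the two factors of $A^{a}$ onto $y_1,\dots,y_{D}$ and restricts to $\phi$ on $A$); in particular $\phi^{a}$ is injective with image $R\cdot\phi(A)$. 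Substituting the finite decomposition then yields
\[
H^{a}_{D}(q^2)=R\otimes_{\C(q)}H_{D}(q^2)=\bigoplus_{c}\bigl(R\otimes_{\C(q)}\phi(A)\bigr)\wh T_{w_c}=\bigoplus_{c}\phi^{a}(A^{a})\,\wh T_{w_c},
\]
which is the desired freeness and proves the lemma.

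Granting the lemma, I would finish by combining the definition of $I^{a}$, the lemma, and the cancellation isomorphism $N\otimes_{A^{a}}(A^{a}\otimes_{A}P)\isom N\otimes_{A}P$ — applied with $N=M_1\otimes M_2$, which restricted along $\iota$ is $(M_1|H_{d_1}(q^2))\otimes(M_2|H_{d_2}(q^2))$ — to obtain natural isomorphisms of right $H_{D}(q^2)$-modules
\[
\begin{aligned}
I^{a}(M_1,M_2)|H_{D}(q^2)&=(M_1\otimes M_2)\otimes_{A^{a}}H^{a}_{D}(q^2)\ \cong\ (M_1\otimes M_2)\otimes_{A^{a}}\bigl(A^{a}\otimes_{A}H_{D}(q^2)\bigr)\\
&\cong\ (M_1\otimes M_2)\otimes_{A}H_{D}(q^2)=I\bigl(M_1|H_{d_1}(q^2),\,M_2|H_{d_2}(q^2)\bigr),
\end{aligned}
\]
all maps natural in $M_1$ and $M_2$. (Finite-dimensionality of the $M_i$ is not needed for the isomorphism itself, though the lemma then also shows that $I^{a}(M_1,M_2)$ is finite dimensional.)

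The part I expect to require the most care is the lemma, specifically the claim that $\phi^{a}$ identifies the Proposition~\ref{P7.1} factorization of $A^{a}$ with $R\otimes_{\C(q)}A$: the polynomial variables of the two parabolic factors are literally among $y_1,\dots,y_{D}$, and the braid, quadratic and Bernstein relations among the parabolic generators $\wh T_i$ ($i\neq d_1$) are restrictions of the corresponding relations in $H^{a}_{D}(q^2)$, so $H^{a}_{D}(q^2)$ is obtained from its parabolic subalgebra $\phi^{a}(A^{a})$ by adjoining exactly the coset representatives $\wh T_{w_c}$ of the finite case. Pinning down this compatibility — essentially the injectivity of $\phi^{a}$ and of the finite parabolic embedding $A\hookrightarrow H_{D}(q^2)$, via linear independence of the $\wh T_w$ — is the one genuinely computational point; everything else is formal homological algebra.
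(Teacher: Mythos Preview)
Your argument is correct. The underlying fact is the same as in the paper --- that $H^{a}_{D}(q^2)$ is free over $\phi^{a}(A^{a})$ of the same rank as $H_{D}(q^2)$ over $\phi(A)$ --- but the paper uses it differently: it writes down the obvious surjection $(m_1\otimes m_2)\otimes h\mapsto(m_1\otimes m_2)\otimes h$ from $I^{a}(M_1,M_2)|H_{D}(q^2)$ to $I(M_1|H_{d_1},M_2|H_{d_2})$ and then invokes the rank equality together with finite-dimensionality of the $M_i$ to conclude by a dimension count. Your route upgrades the rank equality to an explicit $(A^{a},H_{D}(q^2))$-bimodule isomorphism $A^{a}\otimes_{A}H_{D}(q^2)\isom H^{a}_{D}(q^2)$ and then applies tensor cancellation; this is more structural, gives the inverse map for free, and, as you note, dispenses with the finite-dimensionality hypothesis. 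The paper's argument is shorter because it takes the freeness for granted and lets dimension do the work; yours spells out exactly why the ranks match and thereby proves a strictly stronger statement.
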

\begin{proof}
The natural map $I^a(M_1,M_2)|H_{d_1+d_2}(q^2)\to I(M_1|H_{d_1}(q^2),M_2|H_{d_2}(q^2))$, $(m_1\otimes m_2)\otimes h\mapsto (m_1\otimes m_2)\otimes h$ ($m_i\in M_i$, $h\in H_{d_1+d_2}(q^2)$), is well-defined surjective homomorphism of $H_{d_1+d_2}(q^2)$-modules. By Proposition \ref{P7.1}, the rank of $H_{d_1+d_2}^a(q^2)$ as an $H_{d_1}^a(q^2)\otimes H_{d_2}^a(q^2)$-module is equal to the rank of $H_{d_1+d_2}(q^2)$ as an $H_{d_1}(q^2)\otimes H_{d_2}(q^2)$-module. Hence $\dim_{\C}I^a(M_1,M_2)=\dim_{\C}I(M_1,M_2)$.
\end{proof}

The theory of unramified representations suggests a family of universal $H_d^a(q^2)$-modules $M_c=H_d^a(q^2)/H_c$, where $c=(c_1,\dots,c_d)\in\C^{\times d}$ and $H_c$ is the right ideal in $H_d^a(q^2)$ generated by $y_j-c_j$ ($1\le j\le d$). It is part of the Langlands-Zelevinsky classification \cite{ze80} that

\begin{prop}\label{P8.3}
$(a)$ Every finite dimensional irreducible $H_d^a(q^2)$-module is isomorphic to a quotient of some $M_c$.\\
$(b)$ For all $c\in \C^{\times d}$, $M_c$ is isomorphic as an $H_d(q^2)$-module to the right regular representation.\\
$(c)$ $M_c$ is reducible as an $H_d^a(q^2)$-module iff $c_j=q^2c_k$ for some $j$, $k$.
\end{prop}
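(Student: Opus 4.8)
The plan is to derive (a) and (b) essentially formally from the structure theorem Proposition~\ref{P7.1}, and to concentrate the effort on (c), which is the affine-Hecke-algebra version of Zelevinsky's irreducibility criterion for induced representations of $\GL_d$.

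\emph{Parts (a) and (b).} The elements $y_1,\dots,y_d$ span a commutative subalgebra and act invertibly, so on a finite-dimensional irreducible module $M$ they admit a common eigenvector $v\neq 0$ (after extending scalars to an algebraic closure if need be), say $v\cdot y_j=c_jv$ with $c=(c_1,\dots,c_d)$, and $c_j\in\C^\times$ since $y_j$ is invertible. As $v$ generates $M$, the map $h\mapsto v\cdot h$ factors through a surjection $M_c=H^a_d(q^2)/H_c\onto M$; this is (a). For (b) set $\mathcal P=\C(q)[y_1^{\pm1},\dots,y_d^{\pm1}]$. By Proposition~\ref{P7.1} the multiplication map $\mathcal P\otimes_{\C(q)}H_d(q^2)\to H^a_d(q^2)$ is an isomorphism, visibly of right $H_d(q^2)$-modules. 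Writing $\fm_c=\sum_j(y_j-c_j)\mathcal P$ for the maximal ideal of $\mathcal P$ attached to $c$, we have $\mathcal P=\fm_c\oplus\C(q)\cdot 1$; and since the $y_j$ commute, left multiplication on $\mathcal P\otimes_{\C(q)}H_d(q^2)$ by $y_j-c_j$ is $(y_j-c_j)\otimes\Id$, so the right ideal $H_c$ corresponds to $\fm_c\otimes_{\C(q)}H_d(q^2)$. Tensoring the splitting of $\mathcal P$ with $H_d(q^2)$ gives $H^a_d(q^2)=H_c\oplus(1\otimes H_d(q^2))$, i.e.\ $M_c\simeq H_d(q^2)$ as a right $H_d(q^2)$-module. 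The same computation identifies $M_c$ with the iterated Zelevinsky product $I^a(M_{c_1},\dots,M_{c_d})$ of the one-dimensional $H^a_1(q^2)$-modules $y_1\mapsto c_j$.

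\emph{Part (c), the ``only if'' direction.} Suppose $c_j=q^2c_k$ for some $j\neq k$. Reorder $c$ into $c'$ placing these two entries first, so that $M_{c'}\simeq I^a\bigl(M_{(c_j,c_k)},M_{c_{i_3}},\dots\bigr)$ by associativity of $I^a$. A direct computation on the two-dimensional $H^a_2(q^2)$-module $M_{(c_j,c_k)}$ --- in the basis given by the images of $1$ and $\wh T_1$, using $\wh T_1y_1\wh T_1=y_2$ and the quadratic relation $\wh T_1^2-(q-q^{-1})\wh T_1-1=0$ --- shows that the $\wh T_1$-eigenline for the eigenvalue $q$ is $y_1,y_2$-stable exactly when $c_j=q^2c_k$, so $M_{(c_j,c_k)}$ then has a one-dimensional submodule. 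Since $I^a$ is exact and multiplies dimensions (Proposition~\ref{P8.2}), inducing this submodule yields a nonzero proper submodule of $M_{c'}$, so $M_{c'}$ has composition length $\geq 2$; as the composition factors of $M_c$ depend only on the multiset $\{c_1,\dots,c_d\}$ (\cite{ze80}), the same holds for $M_c$, which is therefore reducible.

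\emph{Part (c), the ``if'' direction, and the main obstacle.} When no two of the $c_i$ stand in the ratio $q^{\pm2}$ one must prove $M_c$ irreducible. The intended route is through the Lusztig--Bernstein intertwiners $\tau_i=\wh T_i-(q-q^{-1})\bigl(1-y_iy_{i+1}^{-1}\bigr)^{-1}$, which satisfy $\tau_iy_i=y_{i+1}\tau_i$ and $\tau_iy_{i+1}=y_i\tau_i$ and whose square is a rational function of $y_i,y_{i+1}$ vanishing only where $y_i/y_{i+1}=q^{\pm2}$. Since every $y$-weight of $M_c$ lies in the $S_d$-orbit of $c$, the hypothesis makes all $\tau_i$ act invertibly on $M_c$; applying the products $\tau_w$ to the generating vector one obtains a full system of simultaneous $y$-eigenvectors, each of which is then shown to generate $M_c$, whence irreducibility. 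This is exactly Zelevinsky's theorem transported to the affine Hecke algebra; carrying it out in full --- and in particular dealing with possible coincidences among the $c_i$ --- is the step I expect to be the real obstacle, and it may be cleanest simply to invoke \cite{ze80} for it, as the paper's wording ("part of the Langlands-Zelevinsky classification") already suggests.
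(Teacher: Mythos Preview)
The paper does not prove Proposition~\ref{P8.3} at all: it simply introduces the statement with ``It is part of the Langlands-Zelevinsky classification \cite{ze80} that'' and moves on. Your proposal therefore goes well beyond the paper's own treatment, and you correctly anticipated this in your final paragraph.

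Your arguments for (a) and (b) are sound. For (a), note that in this paper $q$ is a fixed nonzero complex number (Definition~\ref{D7.1}), so the ground field is already algebraically closed and the common eigenvector exists outright; the scalar-extension caveat is unnecessary. For (b), your identification of $H_c$ with $\fm_c\otimes H_d(q^2)$ under the vector-space isomorphism of Proposition~\ref{P7.1} is correct because left multiplication by $y_j-c_j$ commutes with the Laurent-polynomial factor.

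For (c), the content is right but you have interchanged the labels: what you call the ``only if'' direction (assuming $c_j=q^2c_k$ and producing a submodule) is the ``if'' direction of the biconditional as stated, and vice versa. Your $d=2$ computation is correct (the $q$-eigenline of $\wh T_1$ is $y$-stable precisely when $c_1=q^2c_2$), and your strategy via Lusztig--Bernstein intertwiners for the converse is the standard one. Invoking \cite{ze80} for the hard direction, and for the independence of composition factors from the ordering of $c$, is exactly what the paper itself does wholesale.
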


Some representations of $H_d^a(q^2)$ can be lifted from those of $H_d(q^2)$:
\begin{prop}\label{P8.4}
For each $z\in \C^\times$ there is a unique homomorphism $\ev_z:H_d^a(q^2)\to H_d(q^2)$ which is the identity on $H_d(q^2)\subset H_d^a(q^2)$ $($thus $\ev_z(\wh T_i)=\wh T_i$ $(1\le i<d))$ and maps $y_1$ to $z$. Moreover, $\ev_z(y_j)=z\wh T_{j-1}\wh T_{j-2}\dots\wh T_2\wh T_1^2\wh T_2\dots\wh T_{j-1}$ $(1\le j\le d)$.
\end{prop}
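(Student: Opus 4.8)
The plan is to produce $\ev_z$ by hand on generators and to reduce the verification of the relations of Definition~\ref{D7.1} to elementary facts about the (rescaled) Jucys--Murphy elements of $H_d(q^2)$. \emph{Uniqueness} is the easy half: the relation $\wh T_iy_i\wh T_i=y_{i+1}$ shows that $y_2,\dots,y_d$ lie in the subalgebra generated by $y_1$ and $\wh T_1,\dots,\wh T_{d-1}$, so $H_d^a(q^2)$ is generated by $\wh T_1,\dots,\wh T_{d-1},y_1^{\pm1}$; thus any algebra homomorphism into $H_d(q^2)$ restricting to the identity on $H_d(q^2)$ and sending $y_1\mapsto z$ is determined. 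Unwinding the recursion gives $y_j=\wh T_{j-1}\cdots\wh T_1\,y_1\,\wh T_1\cdots\wh T_{j-1}$; since $z$ is a central scalar, $\ev_z(y_j)$ is forced to equal $z\,\wh T_{j-1}\cdots\wh T_2\wh T_1^2\wh T_2\cdots\wh T_{j-1}$. This already yields the ``moreover'' clause, \emph{granted that such a homomorphism exists}.

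For existence, put $Y_1=z$ and $Y_{j}=\wh T_{j-1}Y_{j-1}\wh T_{j-1}$ in $H_d(q^2)$, so that $Y_j=z\,\wh T_{j-1}\cdots\wh T_2\wh T_1^2\wh T_2\cdots\wh T_{j-1}$, and declare $\ev_z(\wh T_i)=\wh T_i$, $\ev_z(y_j^{\pm1})=Y_j^{\pm1}$. One then checks that the images satisfy every relation of Definition~\ref{D7.1}. The braid, commutation and quadratic relations among the $\wh T_i$ are immediate; $Y_jY_j^{-1}=1=Y_j^{-1}Y_j$ holds because each $\wh T_i$ is invertible (the quadratic relation gives $\wh T_i^{-1}=\wh T_i-(q-q^{-1})$) and $z\in\C^\times$, so $Y_j$ is a product of units; and $\wh T_iY_i\wh T_i=Y_{i+1}$ holds by the very definition of $Y_{i+1}$. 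What remains is (i) $Y_j\wh T_i=\wh T_iY_j$ whenever $i\notin\{j-1,j\}$, and (ii) $Y_jY_k=Y_kY_j$ for all $j,k$.

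For (i): if $i\ge j+1$ then $\wh T_i$ commutes with every letter $\wh T_1,\dots,\wh T_{j-1}$ of $Y_j$ (indices differ by at least $2$), hence with $Y_j$; if $i\le j-3$ one writes $Y_j=\wh T_{j-1}Y_{j-1}\wh T_{j-1}$, commutes $\wh T_i$ past $\wh T_{j-1}$, and applies induction on $j$; and the remaining case $i=j-2$ is a direct computation in the word $Y_j=z\,\wh T_{j-1}\wh T_{j-2}(\cdots)\wh T_{j-2}\wh T_{j-1}$: braid $\wh T_{j-2}\wh T_{j-1}\wh T_{j-2}=\wh T_{j-1}\wh T_{j-2}\wh T_{j-1}$ at the front, slide the freed $\wh T_{j-1}$ through the central block $\wh T_{j-3}\cdots\wh T_1^2\cdots\wh T_{j-3}$ (whose letters all commute with $\wh T_{j-1}$), and braid $\wh T_{j-1}\wh T_{j-2}\wh T_{j-1}=\wh T_{j-2}\wh T_{j-1}\wh T_{j-2}$ at the back, landing on $Y_j\wh T_{j-2}$. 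Given (i), assertion (ii) is essentially free: (i) says that for $j<k$ the element $Y_k$ commutes with $\wh T_1,\dots,\wh T_{k-2}$, hence with the subalgebra they generate, which contains $Y_j$ (a word in $\wh T_1,\dots,\wh T_{j-1}$ with $j-1\le k-2$); so $Y_jY_k=Y_kY_j$. This completes the construction of $\ev_z$, which by inspection is the identity on $H_d(q^2)$ and sends $y_1$ to $z$.

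The only non-formal part, and hence the main obstacle, is case (i): that $Y_j$ commutes with $\wh T_i$ for $i\notin\{j-1,j\}$. Everything else (uniqueness, the $\wh T_i$-relations, invertibility, the relation $\wh T_iY_i\wh T_i=Y_{i+1}$, and the pairwise commutativity (ii)) is formal once (i) is in hand. The bookkeeping of braid moves in the $i=j-2$ subcase is where the genuine content lies; it is elementary but must be carried out carefully, and an alternative is simply to quote the standard commutation properties of Jucys--Murphy elements in the finite Hecke algebra.
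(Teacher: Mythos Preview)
Your proof is correct. The paper states Proposition~\ref{P8.4} without proof, treating it as a standard fact about the affine Hecke algebra; your argument via the (rescaled) Jucys--Murphy elements $Y_j=z\,\wh T_{j-1}\cdots\wh T_1^2\cdots\wh T_{j-1}$ supplies precisely the expected verification, and the braid manipulation in the case $i=j-2$ (pushing the freed $\wh T_{j-1}$ through the central block and rebraiding) is carried out correctly. There is nothing to compare against on the paper's side, so the only comment is that your write-up is more complete than what the paper offers.
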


Let $M$ be any $H_d(q^2)$-module. Pulling back $M$ by $ev_z$ gives an $H_d^a(q^2)$-module $M(z)$ which is isomorphic to $M$ as an $H_d(q^2)$-module.

\section{Yang-Baxter equations}\label{9}
Quantum algebras were developed by Drinfeld \cite{d85}, \cite{d86} and Jimbo \cite{j86}. In particular \cite{d86} introduced quasi triangular Hopf algebras. This is a Hopf algebra $\AA$ with an element $R\in\AA\wh\otimes\AA$ satisfying (recall that $\Delta$ is the comultiplication, and put $\Delta'=\tau\Delta$, $\tau(x\otimes y)=y\otimes x$)
\[
\Delta'(x)=R\Delta(x)R^{-1}\quad\forall x\in\AA,
\]
\[(\Delta\otimes\id)R=R^{13}R^{23},\quad (\id\otimes\Delta)R=R^{13}R^{12},
\]
where $R^{12}$, $R^{23}$, $R^{13}$ are explicitly defined in Theorem \ref{T10.1} below. The element $R$ satisfies the Yang-Baxter (YB) equation; it is called {\em the universal $R$-matrix}. Construction of the quasi triangular Hopf algebra is based in \cite{d86} on the notion of the quantum double: the quantum double $W(\AA)$ of a Hopf algebra $\AA$ is a quasi triangular Hopf algebra isomorphic to $\AA\otimes\AA'$ as a vector space, with the canonical $R$-matrix $R=\sum_ie_i\otimes e^i$, where $\{e_i\}$ and $\{e^i\}$ are dual bases in $\AA$ and its dual $\AA'$. For any quantum algebra $U_q(\fg)$, which is the Drinfeld-Jimbo deformation of a Kac-Moody algebra $\fg$, there is a surjection to $U_q(\fg)$ from the quantum double of the corresponding Borel subalgebra: $W(U_q(\fb_+))\to U_q(\fg)$. Thus any quantum algebra $U_q(\fg)$ is a quasi triangular Hopf algebra. The problem is to give an explicit expression to the universal $R$-matrix directly in terms of $U_q(\fg)$. The implicit form of such an expression was given by \cite{d86}. 

For quantum superalgebras: $q$-deformations of finite dimensional contragredient Lie superalgebras, such a formula is given in \cite{kt91}. Let us recall the universal $R$-matrix in our case of $U_q(\gl(m,n))$. This $R$ will be used to construct an action of the Hecke algebra $H_d(q^2)$ on $V^{\otimes d}$, which commutes with the action $\rho_d$ of $U_q(\gl(m,n))$ on $V^{\otimes d}$.

\section{Universal $R$-matrix}\label{10}
Let $\tau$ be the (bilinear) involution $\tau(x\otimes y)=(-1)^{p(x)p(y)}y\otimes x$ (for homogeneous elements $x$ and $y$) of $U_q(\gl(m,n))\otimes U_q(\gl(m,n))$. Define the opposite comultiplication by $\Delta'=\tau\Delta$.
\begin{thm}\label{T10.1}
$($\cite{kt91}$)$ There is a unique invertible solution 
\[
R=\sum_ix_i\otimes y_i\in U_q(\gl(m,n))\wh\otimes U_q(\gl(m,n))
\]
of parity $0$ $($where $\wh\otimes$ means the completion$)$ of the equations
\[
\Delta'(x)=R\Delta(x)R^{-1}\quad\forall x\in U_q(\gl(m,n)),
\]
\[(\Delta\otimes\id)R=R^{13}R^{23},\quad (\id\otimes\Delta)R=R^{13}R^{12},
\]
where $R^{12}=\sum_ix_i\otimes y_i\otimes 1$, $R^{23}=\sum_i1\otimes x_i\otimes y_i$, $R^{13}=\sum_ix_i\otimes 1\otimes y_i$.
\end{thm}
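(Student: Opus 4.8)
The plan is to realise $R$ as the push-forward of the canonical $R$-matrix of a quantum double, along the surjection $W(U_q(\fb_+))\onto U_q(\gl(m,n))$ already recalled in Section~\ref{9}. Here $U_q(\fb_+)$ is the sub-Hopf-superalgebra of $U_q(\gl(m,n))$ generated by the $q^{h}$ $(h\in P^{\vee})$ and the $e_i$, and $U_q(\fb_-)$ the opposite Borel, generated by the $q^{h}$ and the $f_i$; the two are put in duality by the \emph{Drinfeld pairing}, determined on generators (up to the usual normalisation) by $\langle q^{h},q^{h'}\rangle=q^{-(h,h')}$, $\langle e_i,f_j\rangle=\delta(i,j)/(q_i-q_i^{-1})$, $\langle e_i,q^{h}\rangle=0=\langle q^{h},f_j\rangle$, and extended to products through the coproducts. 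The key structural input is that this is a well-defined Hopf pairing whose left and right radicals are exactly the ideals cut out by the bitransitivity (quadratic, cubic and quartic) relations of Section~\ref{3}; granting this, the double $W(U_q(\fb_+))$ is quasi-triangular with canonical $R$-matrix $\sum_{\mu}u_{\mu}\otimes u^{\mu}$ for dual homogeneous bases $\{u_{\mu}\}\subset U_q(\fb_+)$, $\{u^{\mu}\}\subset U_q(\fb_-)$, and identifying the two copies of the Cartan and passing to the quotient recovers the surjection $W(U_q(\fb_+))\onto U_q(\gl(m,n))$ of Section~\ref{9}. Then $R$ is the image of $\sum_{\mu}u_{\mu}\otimes u^{\mu}$.

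Next I would attend to convergence and parity. The Borels $U_q(\fb_\pm)$ are graded by the root lattice with finite-dimensional homogeneous components; taking the $u_{\mu},u^{\mu}$ homogeneous, the canonical $R$-matrix decomposes as $R=\sum_{\beta}R_{\beta}$ with $R_{\beta}$ of weight $\beta$ on the first tensor factor and $-\beta$ on the second, so $R$ is a well-defined element of the completion $U_q(\gl(m,n))\wh\otimes U_q(\gl(m,n))$, its degree-zero component being the group-like Cartan element $q^{t}$, $t$ the canonical element of $\fh\otimes\fh$ determined by the invariant form. Since the pairing is even — it vanishes on products of opposite parity — one may take $p(u^{\mu})=p(u_{\mu})$, so $R$ has parity $0$. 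The three required identities, the intertwining relation $\Delta'(x)=R\Delta(x)R^{-1}$ and the coproduct identities $(\Delta\otimes\id)R=R^{13}R^{23}$ and $(\id\otimes\Delta)R=R^{13}R^{12}$, are then the standard formal consequences of the double construction; in the super setting the only extra care needed is with the Koszul signs, which are precisely those already built into $\tau(x\otimes y)=(-1)^{p(x)p(y)}y\otimes x$ and into the superbracket, and each identity is checked by evaluating both sides against $U_q(\fb_-)$ through the pairing.

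For uniqueness I would argue by the weight grading. Evaluating $\Delta'(q^{h})=R\Delta(q^{h})R^{-1}$ forces every component $R_{\beta}$ to carry the indicated weight, so $R_{0}$ lies in the completion of $\fh\otimes\fh$ and commutes with all $q^{h}\otimes q^{h}$. Applying the intertwining relation to $x=e_i$ and to $x=f_i$ and comparing homogeneous components yields, for each $\beta>0$, a recursion expressing $R_{\beta}$ in terms of the $R_{\beta'}$ with $\beta'<\beta$ and of $R_{0}$, while the degree-zero parts of $(\Delta\otimes\id)R=R^{13}R^{23}$ and $(\id\otimes\Delta)R=R^{13}R^{12}$ pin $R_{0}$ down to $q^{t}$; hence any two invertible even solutions coincide. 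The step I expect to be genuinely hard — everything else being essentially formal — is the nondegeneracy of the Drinfeld pairing, equivalently the identification of its radical with the defining ideal of $U_q(\gl(m,n))$: this is exactly where the super-specific relations ($e_m^{2}=f_m^{2}=0$ and the quartic relation of Section~\ref{3}) enter, and it is the content supplied by Khoroshkin--Tolstoy \cite{kt91} (and, in the generators-and-relations language we adopt, by Yamane \cite{y99}). An alternative route I would keep in reserve is the explicit multiplicative $q$-exponential product of Khoroshkin--Tolstoy over a convex order on the positive roots, in which the odd isotropic simple root $\alpha_m$ contributes only the truncated factor $1+(q-q^{-1})\,e_m\otimes f_m$ because $e_m^{2}=0$; the three relations are then verified by induction on the rank.
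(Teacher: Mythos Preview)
The paper does not supply its own proof of this theorem: it is stated with the citation \cite{kt91} and taken as input, after which the paper simply writes down the image of $R$ in $\End(V\otimes V)$ and proceeds. So there is no ``paper's own proof'' to compare against; your sketch is essentially the standard argument behind the cited reference.

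That said, your outline is a faithful account of the Drinfeld/Khoroshkin--Tolstoy approach discussed in Section~\ref{9}: realise $U_q(\gl(m,n))$ as a quotient of the double $W(U_q(\fb_+))$, push forward the canonical $R$-matrix, and keep track of Koszul signs. Your identification of the one nontrivial step --- nondegeneracy of the Drinfeld pairing, equivalently that its radical is exactly the ideal generated by the bitransitivity relations of Section~\ref{3}, including the super-specific $e_m^2=f_m^2=0$ and the quartic relation --- is accurate, and is indeed what is supplied by \cite{kt91} (and in the affine setting by \cite{y99}). The alternative you mention, the explicit $q$-exponential product over a convex ordering with the truncated factor at the odd isotropic root, is precisely the form in which \cite{kt91} presents the answer. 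Your uniqueness argument via the weight filtration is also the standard one. In short, your proposal is correct and matches the literature the paper is citing; there is nothing in the paper itself to contrast it with.
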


This $R$ is called the universal $R$-matrix. The space $V$ is $\Z/2$-graded, $=V_{\0}\oplus V_{\1}$, and $\ove_i$, $(1\le i\le n')$, makes a basis with $V_{\0}=\oplus_{1\le i\le m}\C\ove_i$, $V_{\1}=\oplus_{m<i\le n'}\C\ove_i$. Then $p(\ove_i)$ is 0 if $1\le i\le m$, and 1 if $m<i\le n'$. Applying $R$ to $V\otimes V$ relative to the basis $\{\ove_i\otimes \ove_j;\,i,\,j\in I\}$, $I=\{1,\dots,n'\}$, the matrix $R$ in $\End(V\otimes V)$ is given by
\[
R=\sum_{1\le i\le n'}q^{(-1)^{p(\ove_i)}}E_{i,i}\otimes E_{i,i}
+\sum_{1\le i\not=j\le n'}E_{i,i}\otimes E_{j,j}
+\sum_{1\le i<j\le n'}(-1)^{p(\ove_i)}(q-q^{-1})E_{j,i}\otimes E_{i,j}.
\]
Here $\rho(E_{ij})\ove_k=\delta(j,k)\ove_i$. The action of $\End(V\otimes V)$ on $V\otimes V$ is $\Z/2$-graded, namely for homogeneous elements 
\[
X\otimes Y\in\End(V\otimes V)=\End(V)\otimes\End(V)
\]
and $u\otimes w\in V\otimes V$, we have 
\[
(X\otimes Y)(u\otimes w)=(-1)^{p(Y)p(u)}Xu\otimes Yw.
\]

The product of tensors is given as 
\[
(X_1\otimes X_2)(Y_1\otimes Y_2)=(-1)^{p(X_2)p(Y_1)}X_1Y_1\otimes X_2Y_2
\]
for homogeneous $X_1\otimes X_2$, $Y_1\otimes Y_2\in\End(V\otimes V)$. Define 
\[
\sigma:V\otimes V\to V\otimes V\,\,\,\mathrm{by}\,\,\, \sigma(u\otimes w)=(-1)^{p(u)p(w)}w\otimes u.
\]

Then $\check R=\sigma R$ is given by
\[
\check R=\sum_{1\le i\le n'}(-1)^{p(\ove_i)}q^{(-1)^{p(\ove_i)}}E_{i,i}\otimes E_{i,i}
+\sum_{1\le i\not=j\le n'}(-1)^{p(\ove_i)}E_{j,i}\otimes E_{i,j}+\sum_{1\le i<j\le n'}(q-q^{-1})E_{i,i}\otimes E_{j,j}.
\]

By direct computation we check that
\[
\check R^2=(q-q^{-1})\check R+I.
\]

For each $j$ ($1\le j<d$) put $\check R_j=\id_V^{\otimes(j-1)}\otimes\check R\otimes\id_V^{\otimes(d-j-1)}\in\End(V^{\otimes d})$, where $\check R$ operates on the $(j,j+1)$ factors. We have $(\check R_i+q^{-1})(\check R_i-q)=0$, and one checks (see \cite[Prop. 2.7]{mo03}, \cite[Thm 2.1]{mi06}):
\begin{prop}\label{P10.2}
$(1)$ The $\check R_j$ satisfy the commutation relations $\check R_i\check R_j=\check R_j\check R_i$ if $|i-j|\ge 2;$ and the braid relations $\check R_i\check R_{i+1}\check R_i=\check R_{i+1}\check R_i\check R_{i+1}$ $(1\le i<d)$, so they define an action $\pi_d$ of $H_d(q^2)$ on $V^{\otimes d}$. $(2)$ This action $\pi_d$ commutes with the natural action $\rho_d$ of $U_q(\gl(m,n))$ on $V^{\otimes d}$, namely $\check R_j\in\End_{(\rho_d,U_q(\gl(m,n)))}(V^{\otimes d})$ for all $j$ $(1\le j<d)$.
\end{prop}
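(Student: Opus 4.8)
The plan is to treat the two assertions separately, in each case reducing to an identity on $V^{\otimes 2}$ or $V^{\otimes 3}$ that can be read off from the properties of the universal $R$-matrix of Theorem~\ref{T10.1} (or, if one prefers, checked by hand from the explicit matrix for $\check R$ displayed above), so that the argument runs parallel to the non-super case of Moon \cite[Prop.~2.7]{mo03} and Mitsuhashi \cite[Thm~2.1]{mi06}.

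For part~(1): the relation $\check R_i\check R_j=\check R_j\check R_i$ for $|i-j|\ge 2$ is immediate, since $\check R_i$ and $\check R_j$ act nontrivially only on the disjoint pairs of tensor factors $(i,i+1)$ and $(j,j+1)$. For the braid relation it suffices, by the same support argument, to prove $\check R_1\check R_2\check R_1=\check R_2\check R_1\check R_2$ in $\End(V^{\otimes 3})$; I would deduce this from the quantum Yang--Baxter equation $R^{12}R^{13}R^{23}=R^{23}R^{13}R^{12}$ in $\End(V^{\otimes 3})$, which follows formally from the hexagon identities $(\Delta\otimes\id)R=R^{13}R^{23}$, $(\id\otimes\Delta)R=R^{13}R^{12}$ together with the intertwining relation $\Delta'(x)=R\Delta(x)R^{-1}$ of Theorem~\ref{T10.1} (apply $R^{12}$ to the first hexagon and commute it past $\Delta\otimes\id$ using the intertwiner and the hexagon identities), the passage from $R$ to $\check R=\sigma R$ turning the Yang--Baxter equation into the braid relation because the graded flips on the factors of $V^{\otimes 3}$ satisfy the symmetric-group relations; a direct entrywise check from the explicit $\check R$ is an equally valid route. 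Combined with the quadratic relation $(\check R_i+q^{-1})(\check R_i-q)=0$ already verified above, the $\check R_i$ satisfy all the defining relations of $H_d(q^2)$ in Definition~\ref{D7.1} (note the quadratic relation makes each $\check R_i$ invertible), which produces the representation $\pi_d$.

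For part~(2): since $\rho_d$ is an algebra homomorphism it is enough to check that each $\check R_j$ commutes with $\rho_d(x)$ for $x$ running over the generators $\sigma$, $q^h$ $(h\in P^\vee)$, $e_i$, $f_i$ $(i\in I')$. For $\sigma$ and $q^h$ this is clear: $\rho_d(\sigma)=\rho(\sigma)^{\otimes d}$ and $\rho_d(q^h)=\rho(q^h)^{\otimes d}$ are diagonal, while $\check R$ has parity $0$ and preserves the weight decomposition of $V\otimes V$. For $e_i$ (and symmetrically $f_i$), write $\rho_d(e_i)=\sum_{1\le k\le d}X_k$ with $X_k=\rho(\sigma^{p(i)})^{\otimes(k-1)}\otimes\rho(e_i)\otimes\rho(q^{-d_ih_i})^{\otimes(d-k)}$; for $k\notin\{j,j+1\}$ one commutes $\check R_j$ past the factor of $X_k$ sitting in slots $(j,j+1)$ using the $\sigma$- and $q^h$-cases just settled, while for $k\in\{j,j+1\}$ the statement collapses to the single identity in $\End(V\otimes V)$ that $\check R$ commutes with $\rho^{\otimes 2}(\Delta(e_i))$. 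This last identity is the specialization to $V\otimes V$ of $R\Delta(x)=\Delta'(x)R$ from Theorem~\ref{T10.1}, once one uses that the graded flip $\sigma$ on $V\otimes V$ realizes the super-transposition $\tau$ on $\End(V)^{\otimes 2}$ (so that $\sigma\,\rho^{\otimes 2}(\Delta'(x))=\rho^{\otimes 2}(\Delta(x))\,\sigma$), whence $\check R_j\in\End_{(\rho_d,U_q(\gl(m,n)))}(V^{\otimes d})$ for all $j$.

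The only real difficulty is the bookkeeping of the $\Z/2$-signs: one must check carefully that conjugation by the graded flip $\sigma_{V,V}$ reproduces the super-transposition $\tau$ with the correct signs, and that the coproduct appearing in Theorem~\ref{T10.1} (which carries the $\sigma$-twist $\Delta_\sigma$ of Section~\ref{3}, the one used to build $\rho_d$) is handled consistently --- i.e.\ that the $\sigma^{p(i)}$ inserted in $\Delta_\sigma(e_i),\Delta_\sigma(f_i)$ together with the parity signs in $\tau$ and in the tensor-product multiplication all cancel as required. Once these sign computations are pinned down, parts~(1) and~(2) follow exactly as in the works of Moon and Mitsuhashi cited above.
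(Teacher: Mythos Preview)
Your proposal is correct and follows the standard route. Note that the paper itself does not give a self-contained proof of Proposition~\ref{P10.2}: it records the quadratic relation by direct computation and then simply cites \cite[Prop.~2.7, 2.9]{mo03} and \cite[Thm~2.1, Prop.~4.1--4.2]{mi06} for parts~(1) and~(2). Your argument --- deducing the braid relation from the Yang--Baxter equation for the universal $R$-matrix of Theorem~\ref{T10.1} and the intertwining property $R\Delta(x)=\Delta'(x)R$, then passing to $\check R=\sigma R$ via the graded flip --- is exactly the content of those cited references, so your approach and the paper's (deferred) proof coincide.

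One small slip: you refer to Moon and Mitsuhashi as ``the non-super case,'' but those works \emph{are} the super (quantum, non-affine) case; the non-super antecedent is Jimbo~\cite{j86}. Also, your caveat about matching the comultiplication in Theorem~\ref{T10.1} with the $\Delta_\sigma$ used to define $\rho_d$ is well placed --- this is indeed where the sign bookkeeping must be done carefully, and it is handled in the cited sources.
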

For (2) see \cite[Prop. 2.9]{mo03}, \cite[Prop. 4.1, 4.2]{mi06}. Moreover, \cite[Thm 3.13, Cor. 3.14]{mo03}, \cite[Thm 4.4]{mi06} show that $\pi_d(H_d(q^2))$ and $\rho_d(U_q(\gl(m,n)))$ are the centralizers of each other in $\End(V^{\otimes d})$, thus
\[
\End_{\rho_d(U_q(\gl(m,n)))}(V^{\otimes d})=\pi_d(H_d(q^2)),
\]
and
\[
\End_{\pi_d(H_d(q^2))}(V^{\otimes d})=\rho_d(U_q(\gl(m,n)))
\]
by the double centralizer theorem of \cite[Thm 3.54]{cr81}. Moreover, \cite[Thm 5.16]{mo03}, \cite[Thm 5.1]{mi06} show that as an $H_d(q^2)\times U_q(\gl(m,n))$-bimodule,
\[
V^{\otimes d}=\oplus_{\lambda\in\Gamma(m,n;d)}H_\lambda\otimes V(\lambda),
\]
where $\Gamma(m,n;d)=\{\lambda=(\lambda_1,\lambda_2,\dots)\in\Par(d);\,\lambda_j\le n$ if $j>m\}$, $V(\lambda)$ is an irreducible representation of $U_q(\gl(m,n))$ indexed by $\lambda$ with $V(\lambda)\not\simeq V(\mu)$ if $\lambda\not=\mu$, and $H_\lambda$ is an irreducible representation of $H_d(q^2)$ indexed by $\lambda$.

In the ordinary (nonsuper) quantum case, this result is due to Jimbo \cite{j86}. In the super, yet non quantum, case, this is due to Berele-Regev \cite[Thm 3.20]{br87}. In the non super, non quantum, case, this is the original result of Schur \cite{sch27}, as refined by Weyl \cite{w53}. Proposition \ref{P10.2} is simply an extension of Jimbo's result, which is the case $n=0$, to the super ($n\ge 1$) case.

\section{Affine Schur-Weyl duality}\label{11}
Let us rephrase the Weyl-Schur duality of \cite[Theorem 5.16]{mo03} (and \cite[Theorem 5.1]{mi06}, \cite[Theorem 3.16]{zy}) in the context of quantum Lie superalgebras in a form useful for our generalization to the affine quantum super case.

\begin{prop}\label{P11.1} 
Fix integers $d$, $m$, $n\ge 2$. There is a unique left $H_d(q^2)$-module structure on $V^{\otimes d}$ such that $\wh T_i$ acts as $\check R_i$ $(1\le i<d);$ the action of $H_d(q^2)$ commutes with the natural action of $U_q^\sigma(\sl(m,n))$ on $V^{\otimes d}$. If $M$ is a right $H_d(q^2)$-module, define $J(M)=M\otimes_{H_d(q^2)}V^{\otimes d}$, with the natural $(\rho_d=\rho^{\otimes d}(\Delta^{(d-1)}))$ left $U_q^\sigma(\sl(m,n))$-module structure obtained from that on $V^{\otimes d}$. If $d<(n+1)(m+1)$ then the functor $M\mapsto J(M)$ is an equivalence from the category of finite dimensional $H_d(q^2)$-modules to the category of finite dimensional $U_q^\sigma(\sl(m,n))$-modules whose irreducible constituents all occur as constituents of $V^{\otimes d}$.
\end{prop}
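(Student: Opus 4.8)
The plan is to derive the equivalence from Morita theory once the double-centralizer input of Moon~\cite{mo03} and Mitsuhashi~\cite{mi06} is available, using the hypothesis $d<(m+1)(n+1)$ to make $V^{\otimes d}$ a progenerator over the Hecke algebra.

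First I would dispose of the structural assertions. By Proposition~\ref{P10.2}(1) the operators $\check R_i$ $(1\le i<d)$ satisfy the braid relations and $(\check R_i+q^{-1})(\check R_i-q)=0$ (a restatement of $\check R^2=(q-q^{-1})\check R+I$), which are precisely the relations imposed on the generators $\widehat T_i$ of $H_d(q^2)$; hence $\widehat T_i\mapsto\check R_i$ extends to an algebra homomorphism $H_d(q^2)\to\End_{\C(q)}(V^{\otimes d})$, and it is the unique one with that property since the $\widehat T_i$ generate $H_d(q^2)$. By Proposition~\ref{P10.2}(2) each $\check R_i$ is $U_q(\gl(m,n))$-linear, hence $U_q^\sigma(\sl(m,n))$-linear; dually, for each $u$ the operator $\rho_d(u)$ is $H_d(q^2)$-linear, so it induces $m\otimes v\mapsto m\otimes\rho_d(u)v$ on $J(M)=M\otimes_{H_d(q^2)}V^{\otimes d}$, making $J(M)$ a left $U_q^\sigma(\sl(m,n))$-module, finite dimensional when $M$ is. Since $q$ is not a root of unity, $H_d(q^2)$ is semisimple (Proposition~\ref{P7.2}), so a finite dimensional $M$ is a direct sum of the pairwise non-isomorphic simples $H_\lambda$; feeding the bimodule decomposition $V^{\otimes d}=\bigoplus_{\lambda\in\Gamma(m,n;d)}H_\lambda\otimes V(\lambda)$ of \cite{mo03,mi06} into $J$ shows $J(H_\lambda)\cong V(\lambda)$ for $\lambda\in\Gamma(m,n;d)$ and $J(H_\lambda)=0$ otherwise, so $J(M)$ lies in the target category.

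Next I would use the bound. By the Hook theorem of Berele--Regev~\cite{br87}, $d<(m+1)(n+1)$ forces $\Gamma(m,n;d)=\Par(d)$; since the simple $H_d(q^2)$-modules are indexed by $\Par(d)$, this says that every one of them occurs in $V^{\otimes d}$, equivalently that the Schur--Weyl map $\pi_d\colon H_d(q^2)\to\End(V^{\otimes d})$ is faithful. Thus $V^{\otimes d}$ is a faithful module over the semisimple algebra $H_d(q^2)$, hence a progenerator of the category of finite dimensional $H_d(q^2)$-modules, and Morita theory gives that $J=-\otimes_{H_d(q^2)}V^{\otimes d}$ is an equivalence onto the category of finite dimensional modules over $E:=\End_{H_d(q^2)}(V^{\otimes d})$; by the double-centralizer theorem~\cite{cr81} applied to the commuting actions of \cite{mo03,mi06}, $E=\rho_d\bigl(U_q^\sigma(\sl(m,n))\bigr)$.

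What remains---and what I expect to be the real obstacle---is to identify $\mathrm{mod}\,E$ with the stated target category, say $\mathcal{C}_d$, of finite dimensional $U_q^\sigma(\sl(m,n))$-modules all of whose constituents occur in $V^{\otimes d}$. Restriction along the surjection $\rho_d\colon U_q^\sigma(\sl(m,n))\twoheadrightarrow E$ is fully faithful with image the modules annihilated by $\ker\rho_d$, and these have constituents among the simple $E$-modules, which---using $\Gamma(m,n;d)=\Par(d)$ and the bimodule decomposition---are exactly the $V(\lambda)$, $\lambda\in\Par(d)$, i.e. exactly the constituents of $V^{\otimes d}$; so $\mathrm{mod}\,E\subseteq\mathcal{C}_d$. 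For the reverse inclusion one must show every $M\in\mathcal{C}_d$ is annihilated by $\ker\rho_d$; since $E$, being the endomorphism algebra of a module over the semisimple algebra $H_d(q^2)$, is itself semisimple, this amounts to showing $\mathcal{C}_d$ is a semisimple category, i.e. $\Ext^1_{U_q^\sigma(\sl(m,n))}(V(\lambda),V(\mu))=0$ for $\lambda,\mu\in\Par(d)$. I would obtain this by a block argument: a central group-like element of $U_q^\sigma(\sl(m,n))$ records the polynomial degree, so $\mathcal{C}_d$ is a union of blocks of the finite dimensional representation category, and within that union the complete reducibility of $V^{\otimes d}$ (Proposition~\ref{P4.1}) together with $E=\rho_d(U_q^\sigma(\sl(m,n)))$ forces semisimplicity. (If instead, as in Chari--Pressley~\cite{cp96}, one restricts the target to completely decomposable modules, this last step is immediate.) Granting $\mathcal{C}_d=\mathrm{mod}\,E$, the Morita equivalence above is the assertion of the proposition, and the relation $J(H_\lambda)\cong V(\lambda)$ pins down the induced bijection on isomorphism classes of simple objects.
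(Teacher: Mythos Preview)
The paper does not supply its own proof of Proposition~\ref{P11.1}: it is presented as a reformulation of the double-centralizer results of \cite{mo03}, \cite{mi06} and \cite{zy}, and is used as input for the affine theorem. Your Morita-theoretic derivation is precisely the standard way to pass from those results (the bimodule decomposition $V^{\otimes d}=\bigoplus_{\lambda\in\Gamma(m,n;d)}H_\lambda\otimes V(\lambda)$ together with $\End_{H_d(q^2)}(V^{\otimes d})=\rho_d(U_q)$) to the functor-level statement, and everything through the identification of $J$ with a Morita equivalence onto $\mathrm{mod}\,E$, $E=\End_{H_d(q^2)}(V^{\otimes d})$, is correct. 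So in substance you are supplying the argument the paper only gestures at.

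Where your sketch does have a genuine gap is exactly where you anticipate it: the identification of $\mathrm{mod}\,E$ with the stated target $\mathcal{C}_d$ needs $\Ext^1_{U_q}(V(\lambda),V(\mu))=0$ for all $\lambda,\mu\in\Par(d)$, and your block argument does not deliver this. A central group-like element recording the total degree acts by the \emph{same} scalar $q^d$ on every $V(\lambda)$ with $|\lambda|=d$, so it separates $\mathcal{C}_d$ from $\mathcal{C}_{d'}$ but cannot split extensions within $\mathcal{C}_d$; and for $\sl(m,n)$ (as opposed to $\gl(m,n)$) even that element is not obviously available. This matters in the super setting, where the full finite-dimensional category of $U_q(\gl(m,n))$ is \emph{not} semisimple, so the vanishing of these $\Ext^1$'s is not automatic and is not a formal consequence of Proposition~\ref{P4.1}. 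The statement is nonetheless true---the polynomial (tensor) representations of $\gl(m|n)$ form a semisimple category---but it is an independent representation-theoretic input, not something your argument produces. Your parenthetical remark is the honest fix: if one takes the target to be the \emph{semisimple} modules with constituents in $V^{\otimes d}$, as the paper itself does when stating the affine functor in Theorem~\ref{T11.2}, then $\mathcal{C}_d=\mathrm{mod}\,E$ is immediate and your proof is complete.
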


Our main result is the next construction of a functor $\FF$, an equivalence of categories. The work is to check that the following extension to the affine context holds. But first we recall the definition of the fundamental representation $(\rho,V)$ of $U_{q,\AI}^\sigma=U_{q,\AI}^\sigma(\EE,\Pi,p)$. The space $V=V_{\0}\oplus V_{\1}$ is a superspace, thus $\Z/2$-graded, $V_{\0}=\oplus_{1\le i\le m}\C\ove_i$, $V_{\1}=\oplus_{m<i\le n'}\C\ove_i$, and there is a parity function $p:V\onto\Z/2$ with $p(\ove_i)$ being 0 on $V_{\0}$ and 1 on $V_{\1}$. The $\sigma$ acts as $\rho(\sigma)\ove_i=(-1)^{p(\ove_i)}\ove_i$ $(i\in I=\{1,\dots,n'=n+m\})$, thus $\rho(\sigma)=\diag(I_m,-I_n)$ in the basis $\{\ove_i\}$. Also $\rho(K_\gamma)\ove_i=q^{(\gamma,\ove_i)}\ove_i$ $(\gamma\in\Gamma\subset\EE)$,
\[
\rho(E_i)\ove_j=\delta(j,i+1)q_{i+1}^{-1}\ove_i,\quad\rho(F_i)\ove_j=\delta(i,j)q_{i+1}\ove_{i+1},
\] 
where we put $\ove_0=0=\ove_{n'+1}$. Thus in the basis $\{\ove_i;\,i\in I\}$ of $V$, 
\[
\rho(E_i)=q_{i+1}^{-1}E_{i,i+1},\quad \rho(F_i)=q_{i+1}E_{i+1,i},\quad \rho(K_\gamma)=\diag(q^{(\gamma,\ove_i)}).
\]
Then $\rho([E_{n'{}'},F_{n'{}'}])=\rho\left(\frac{K_{n'{}'}-K_{n'{}'}^{-1}}{q^{-1}-q}\right)$, confirming $(\QS3)$.

Recall that $\alpha_i=\ove_i-\ove_{i+1}$ ($1\le i<n'$) and $\alpha_0=-\ove_1+\ove_{n'}$, and that $(\ove_i,\ove_j)=\delta_{ij}(-1)^{p(\ove_i)}$. In particular $\rho(K_{\alpha_i})=\diag(I,q,q^{-1},I)$, $I$ signifies the identity matrix of the suitable size, $q$ at the $i$th place, if $1\le i<m$; $\rho(K_m)=\diag(I,q,q,I)$, $q$ at the $m$th and $(m+1)$st places; $\rho(K_{\alpha_i})= \diag(I,q^{-1},q,I)$, $q^{-1}$ at the $i$th place, if $m<i<n'$. Put $K_{\prod}=\prod_{1\le i\le n'{}'}K_{\alpha_i}$. Then $\rho(K_{\prod})=\diag(q,I,q)$. Put $E_{\prod}=E_1E_2\cdots E_{n'{}'}$ and $F_{\prod}=F_{n'{}'}\cdots F_2F_1$. Then $\rho(E_{\prod})=E_{1,n'}$ and $\rho(F_{\prod})=E_{n',1}$. Put $K_0=K_{\alpha_0}=K_{\prod}^{-1}$. Then $K_{\alpha_0}K_{\alpha_1}\dots K_{\alpha_{n'{}'}}$ -- as a central element of $U_q^\sigma$ -- acts as the identity.

For each $a\in\C^\times$ extend the representation $(\rho,V)$ of $U_{q,\AI}^\sigma$ to a $\wt U_q^\sigma=\wt U_q^\sigma(\EE,\Pi,p)$-module $(\rho,V(a))$ by $E_0=aF_{\Pi}$ and $F_0=a^{-1}E_{\Pi}$, thus $\rho(E_0)=a\rho(F_{\prod})$ and $\rho(F_0)=a^{-1}\rho(E_{\prod})$. Then $E_0F_0+F_0E_0=\frac{K_{\alpha_0}-K_{\alpha_0}^{-1}}{q^{-1}-q}$, which is compatible with $(\QS3)$ with the choice $q_0=q^{-1}$, thus $d_0=-1$.

Recall also that $\wt U_q^\sigma$ has a Hopf algebra structure $(\Delta,S,\varep)$ with $\Delta(\sigma)=\sigma\otimes\sigma$, $\Delta(K_\gamma)=K_\gamma\otimes K_\gamma$, $\Delta(F_i)=F_i\otimes K_{\alpha_i}^{-1}+\sigma^{p(\alpha_i)}\otimes F_i$, $\Delta(E_i)=E_i\otimes 1+ K_{\alpha_i}\sigma^{p(\alpha_i)}\otimes E_i$ where the parity $p$ of $\alpha_i$ is 0 except when $i=0$ or $i=m$ when it is 1. We defined $\Delta^{(k)}:\wt U_q^\sigma\to (\wt U_q^\sigma)^{\otimes (k+1)}$ to be $(\Delta\otimes 1^{\otimes (k-1)})\Delta^{(k-1)}$, where $\Delta^{(1)}=\Delta$. Also we defined $\rho_d:\wt U_q^\sigma\to\End(V^{\otimes d})$ by $\rho_d(x)=\rho^{\otimes d}\circ\Delta^{(d-1)}(x)$. Explicitly: $\rho_d(\sigma)=\rho(\sigma)^{\otimes d}$, $\rho_d(K_\gamma)=\rho(K_\gamma)^{\otimes d}$, and
\[
\rho_d(E_i)=\sum_{1\le k\le d}\rho(\sigma^{p(\alpha_i)}K_{\alpha_i})^{\otimes (k-1)}\otimes\rho(E_i)\otimes 1^{\otimes (d-k)},
\]
\[
\rho_d(F_i)=\sum_{1\le k\le d}\rho(\sigma^{p(\alpha_i)})^{\otimes (k-1)}\otimes\rho(F_i)\otimes \rho(K_{\alpha_i}^{-1})^{\otimes (d-k)},
\]
(recall that $p(\alpha_i)=0$ if $i\not=0$, $m$; $p(\alpha_0)=1=p(\alpha_m)$) since we have, by induction, 
\[
\Delta^{(d-1)}(E_i)=\sum_{1\le j\le d}(\sigma^{p(\alpha_i)}K_{\alpha_i})^{\otimes (j-1)}\otimes E_i\otimes 1^{\otimes (d-j)}=\Delta^{(d-2)}(E_i)\otimes 1+(\sigma^{p(\alpha_i)}K_{\alpha_i})^{\otimes (d-1)}\otimes E_i,
\]
\[
\Delta^{(d-1)}(F_i)=\sum_{1\le j\le d}(\sigma^{p(\alpha_i)})^{\otimes (j-1)}\otimes F_i \otimes (K_{\alpha_i}^{-1})^{\otimes (d-j)}=\Delta^{(d-2)}(F_i)\otimes K_{\alpha_i}^{-1}+(\sigma^{p(\alpha_i)})^{\otimes (d-1)}\otimes F_i.
\]

\begin{thm}\label{T11.2} 
Fix integers $d$, $m$, $n\ge 2$. There is a functor $\FF$ from the category of finite dimensional right $H_d^a(q^2)$-modules to the category of finite dimensional semisimple left $U_{q,\AI}^\sigma(\EE,\Pi,p)$-modules whose irreducible constituents are all submodules of $V^{\otimes d}$, defined as follows. Let $M$ be a right $H_d^a(q^2)$-module. Define $\FF(M)$ to be $J(M)$ as a $U_q^\sigma(\sl(m,n))$-module. Let the remaining generators of $U_{q,\AI}^\sigma(\EE,\Pi,p)$ act by
\[
(\rho_d(E_0))(m\otimes v)=\sum_{1\le j\le d}my_j^{-1}\otimes\rho^{\otimes d}(Y_{jE}^{(d)})v,\qquad Y_{jE}^{(d)}=(\sigma K_{\prod}^{-1})^{\otimes (j-1)}\otimes F_{\prod} \otimes 1^{\otimes (d-j)},
\]
\[
(\rho_d(F_0))(m\otimes v)=\sum_{1\le j\le d}my_j\otimes\rho^{\otimes d}(Y_{jF}^{(d)})v,\qquad Y_{jF}^{(d)}=\sigma^{\otimes (j-1)}\otimes E_{\prod} \otimes K_{\prod}^{\otimes (d-j)},
\]
for all $m\in M$ and $v\in V^{\otimes d}$, and $\rho_d(K_{\alpha_0})(m\otimes v)=m\otimes\rho(K_{\Pi}^{-1})^dv$, and
$\rho_d\left(\left[\frac{K_0-K_0^{-1}}{q-q^{-1}}\right]\right)(m\otimes v)$
\[
=\sum_{1\le j\le d}m\otimes\left((\rho(K_{\prod}^{-1})^{\otimes (j-1)}\otimes\rho\left(\frac{K_{\prod}-K_{\prod}^{-1}}{q-q^{-1}}\right)\otimes\rho(K_{\prod})^{\otimes (d-j)})\right)v.
\]
If $d<n'$ then the functor $M\mapsto \FF(M)$ is an equivalence from the category of finite dimensional $H_d^a(q^2)$-modules to the category of finite dimensional $U_{q,\AI}^\sigma(\sl(m,n))$-modules whose irreducible constituents all occur as constituents of $V^{\otimes d}$.
\end{thm}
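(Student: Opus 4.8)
\medskip\noindent\textbf{Proof plan.}
The plan is to follow the architecture of the proof of \cite[Theorem 4.2]{cp96}, now carried out in the $\Z/2$-graded setting, taking as input the finite super Schur--Weyl duality of Proposition \ref{P11.1} (due to Moon \cite{mo03} and Mitsuhashi \cite{mi06}). There are three tasks: (i) show that the displayed formulas define a $U_{q,\AI}^\sigma(\EE,\Pi,p)$-module structure on $\FF(M)=J(M)$; (ii) check that $M\mapsto\FF(M)$ is a functor; and (iii) show it is an equivalence once $d<n'$. Task (ii) is immediate: an $H_d^a(q^2)$-linear map $f\colon M\to N$ induces $\FF(f)=f\otimes\id_{V^{\otimes d}}$, which intertwines $\rho_d(x)$ for every generator $x$ because $f$ commutes with the operators $\check R_j$ building the $U_q^\sigma(\sl(m,n))$-action and with right multiplication by the $y_j^{\pm1}$ occurring in $\rho_d(E_0)$ and $\rho_d(F_0)$.

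For task (i) one first checks that $\rho_d(E_0)$ and $\rho_d(F_0)$, which a priori are operators on $M\otimes_{\C}V^{\otimes d}$, descend to $M\otimes_{H_d(q^2)}V^{\otimes d}$: for each generator $\wh T_i$ of $H_d(q^2)$ one must verify that $\sum_j(m\wh T_i)y_j^{\mp1}\otimes\rho^{\otimes d}(Y^{(d)}_{j\bullet})v=\sum_j my_j^{\mp1}\otimes\rho^{\otimes d}(Y^{(d)}_{j\bullet})(\check R_iv)$ holds in $M\otimes_{H_d(q^2)}V^{\otimes d}$, and, rewriting $\wh T_iy_j^{\pm1}$ by means of $\wh T_iy_j=y_j\wh T_i$ $(j\neq i,i+1)$ and $\wh T_iy_i\wh T_i=y_{i+1}$, this reduces to the commutations $\check R_iY^{(d)}_{j\bullet}=Y^{(d)}_{j\bullet}\check R_i$ $(j\neq i,i+1)$ together with a relation matching $\check R_iY^{(d)}_{i\bullet}\check R_i$ with $Y^{(d)}_{i+1,\bullet}$, both of which follow from the explicit matrix of $\check R$ and Proposition \ref{P10.2}, just as in the ungraded affine case. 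One then verifies that $\rho_d$ respects every defining relation of $U_{q,\AI}^\sigma$ from Proposition \ref{P6.1}. The relations involving only the finite generators $E_i,F_i,K_{\alpha_i}$ with $1\le i\le n'{}'$ --- including the quartic relations $(\QS4)(4)$, $(\QS5)(4)$ at the gray vertex $m$ --- hold automatically, since on $\FF(M)$ these generators act through the finite action on $V^{\otimes d}$ of Proposition \ref{P11.1}. The remaining ones --- $(\QS2)$ and $(\QS3)$ for the index $0$ and the cross terms $[E_0,F_j]=0=[E_i,F_0]$, the case $E_0^2=0$ of $(\QS4)(2)$, the instances $E_0E_m+E_mE_0=0$ and $[E_0,E_j]=0$ $(j\neq0,1,n'{}')$ of $(\QS4)(1)$, the cases $i=1,n'{}'$ of $(\QS4)(3)$, and the affine quartic relations $(\QS4)(4')$ and $(\QS5)(4')$ at the gray vertex $0$ --- are checked by direct computation, using the entries of $\check R$, the identities $\rho(F_{\prod})=E_{n',1}$ and $\rho(E_{\prod})=E_{1,n'}$ on $V$ (whence $\rho(F_{\prod})^2=\rho(E_{\prod})^2=0$), the nilpotency $\rho(E_m)^2=\rho(F_m)^2=0$, the value $\rho(K_{\prod})=\diag(q,I,q)$, and the affine Hecke relations among the $y_j$. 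Finally, complete reducibility of $\FF(M)$ as a $U_q^\sigma(\sl(m,n))$-module, with constituents among the $V(\lambda)$, $\lambda\in\Gamma(m,n;d)$ (each of which is a submodule of $V^{\otimes d}$), follows from Proposition \ref{P4.1}, semisimplicity of $H_d(q^2)$, and the bimodule decomposition $V^{\otimes d}=\oplus_\lambda H_\lambda\otimes V(\lambda)$.

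For task (iii), assume $d<n'=m+n$; since $m+n<(m+1)(n+1)$, Proposition \ref{P11.1} applies, so $J$ is an equivalence from finite-dimensional $H_d(q^2)$-modules to finite-dimensional $U_q^\sigma(\sl(m,n))$-modules whose constituents lie in $V^{\otimes d}$, and we bootstrap from this. For \emph{full faithfulness}: a $U_{q,\AI}^\sigma$-morphism $\phi\colon\FF(M)\to\FF(N)$ is in particular a $U_q^\sigma(\sl(m,n))$-morphism, hence $\phi=J(f)$ for a unique $H_d(q^2)$-linear $f$; imposing $\phi\,\rho_d(E_0)=\rho_d(E_0)\,\phi$ (and likewise for $F_0$) gives $\sum_j\bigl(f(my_j^{-1})-f(m)y_j^{-1}\bigr)\otimes\rho^{\otimes d}(Y^{(d)}_{jE})v=0$ in $N\otimes_{H_d(q^2)}V^{\otimes d}$ for all $m$ and $v$, and a non-degeneracy lemma --- asserting that $Y^{(d)}_{1E},\dots,Y^{(d)}_{dE}$ stay ``independent'' after $-\otimes_{H_d(q^2)}V^{\otimes d}$; for instance already on $v=\ove_1^{\otimes d}$ their images are nonzero multiples of the linearly independent vectors $\ove_1^{\otimes(j-1)}\otimes\ove_{n'}\otimes\ove_1^{\otimes(d-j)}$ --- forces $f$ to commute with each $y_j$, i.e. to be $H_d^a(q^2)$-linear; the converse is clear, so $\Hom_{U_{q,\AI}^\sigma}(\FF(M),\FF(N))=\Hom_{H_d^a(q^2)}(M,N)$. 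For \emph{essential surjectivity}: given a finite-dimensional $U_{q,\AI}^\sigma$-module $W$ whose constituents occur in $V^{\otimes d}$, Proposition \ref{P11.1} gives $W\cong J(M)$ as $U_q^\sigma(\sl(m,n))$-modules for some finite-dimensional $H_d(q^2)$-module $M$; transporting the actions of $E_0,F_0$ on $W$ through this isomorphism and using $(\QS2)$, $(\QS3)$ for the index $0$ together with the same non-degeneracy, these operators must take the prescribed shape for a unique extension of the $H_d(q^2)$-action on $M$ to an $H_d^a(q^2)$-action, whence $W\cong\FF(M)$. It is at these two points --- and only there --- that the hypothesis $d<n'$ enters.

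The main obstacle is the verification in task (i) of the new quartic relations $(\QS4)(4')$ and $(\QS5)(4')$ at the affine gray vertex $0$ (and, relatedly, of the anticommutator $E_0E_m+E_mE_0=0$): unlike every other relation these have no counterpart either in \cite{cp96} or in the non-affine super dualities \cite{mo03,mi06}, so they must be checked by hand, simultaneously handling the parity signs of the super structure, the entries of $\check R$, and the affine Hecke relation $\wh T_iy_i\wh T_i=y_{i+1}$; this is exactly the novelty of the affine super case. A secondary technical point is to isolate and prove the non-degeneracy lemma used in task (iii), the graded analogue of the corresponding lemma in \cite{cp96}, which is precisely where the hypothesis $d<n'$ becomes essential.
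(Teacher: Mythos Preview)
Your plan is architecturally the same as the paper's --- split into well-definedness of $\rho_d(E_0),\rho_d(F_0)$ on $M\otimes_{H_d(q^2)}V^{\otimes d}$, verification of the defining relations of $U_{q,\AI}^\sigma$ (with the genuinely new work being $(\QS4)(4')$, $(\QS5)(4')$ at the affine gray vertex), and then the equivalence argument bootstrapped from the finite duality --- and the paper carries this out in Sections~12--16. So at the level of strategy you have it right.

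There is, however, a real gap in your non-degeneracy step. Your proposed witness $v=\ove_1^{\otimes d}$ does give vectors $\rho^{\otimes d}(Y^{(d)}_{jE})v$ that are linearly independent \emph{in $V^{\otimes d}$}, but that is not what you need: the vanishing $\sum_j n_j\otimes\rho^{\otimes d}(Y^{(d)}_{jE})v=0$ takes place in $N\otimes_{H_d(q^2)}V^{\otimes d}$, where the vectors $\ove_1^{\otimes(j-1)}\otimes\ove_{n'}\otimes\ove_1^{\otimes(d-j)}$ are all in the same $H_d(q^2)$-orbit (each $\check R_i$ moves the lone $\ove_{n'}$), so one cannot read off $n_j=0$ individually. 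The paper's fix --- which is precisely where $d<n'$ enters --- is to use tensors $v$ with \emph{pairwise distinct} entries from $\{\ove_1,\dots,\ove_{n'}\}$. For such $v$ the map $m\mapsto m\otimes v$ is injective (Lemma~16.1), because $v$ generates $V^{\otimes d}$ under the finite $U_q^\sigma$-action and hence projects nontrivially to every isotypic component. One then chooses, for each $j$, a specific $v^{(j)}$ with $\ove_{n'}$ only in position $j$ (and distinct entries elsewhere), so that $\rho_d(F_0)(m\otimes v^{(j)})$ lands on a single term $\alpha_{jF}(m)\otimes w^{(j)}$ (Lemma~16.2); this extracts the would-be $y_j$ one at a time.

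Relatedly, your essential surjectivity sketch (``these operators must take the prescribed shape'') understates the work. Knowing the shape of $\rho_d(F_0)$ on the special vectors $v^{(j)}$ does not immediately give it on all of $V^{\otimes d}$; the paper proves this via a two-step induction (Lemma~16.3): first on the number $r$ of $\ove_1$'s in $v$ (with a single $\ove_{n'}$), using $[E_1,F_0]=0$, and then on the number $s$ of $\ove_{n'}$'s, using $(\QS5)(3)$ in the form $F_0F_{n''}^2=(q+q^{-1})F_{n''}F_0F_{n''}-F_{n''}^2F_0$. Only once this is established does one verify (Lemma~16.4) that the extracted operators $\alpha_{jF},\alpha_{jE}$ satisfy the affine Hecke relations, again by evaluating on carefully chosen distinct-entry tensors. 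You should expect these two lemmas to be the bulk of the equivalence argument; the rest of your plan is sound.
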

This Theorem holds also for $d=1$. Its proof uses implicitly this case. We showed that our functor is an equivalence only for $d<n'$. Perhaps this result extends to $d<(n+1)(m+1)$ instead of $d<n'=m+n$. But our method of proof, which follows \cite{cp96}, requires $d<n'$. Note that $m\in M$ is unrelated to the integer $m=\dim V_{\0}$. 

\section{Operators are well-defined}\label{12}
The first task in order to prove the theorem is to show that the operators $\rho_d(E_0)$ and $\rho_d(F_0)$ are well defined. Then we need to check they satisfy the relations which define $U_{q,\AI}^\sigma$. We need to check only the new relations, those involving the generators $E_0$, $F_0$, $\left[\frac{K_0-K_0^{-1}}{q-q^{-1}}\right]$. We leave the verification of $(\QS2)$ for $E_0$, $F_0$, $K_{\alpha_0}$ to the reader. Then we need establish the equivalence of categories. In this section we check the operators are well defined. Thus we need to verify that
\[
(\rho_d(F_0))(m\wh T_i\otimes v)=(\rho_d(F_0))(m\otimes\wh T_iv)
\]
for all $m\in M$ and $v\in V^{\otimes d}$, namely as operators on $J(M)=M\otimes_{H_d(q^2)}V^{\otimes d}$ we have
\[
\sum_{1\le j\le d}\wh T_iy_j\otimes\rho^{\otimes d}(Y_{jF}^{(d)})=\sum_{1\le j\le d}y_j\otimes\rho^{\otimes d}(Y_{jF}^{(d)})\wh T_i.
\]
Recall that $\wh T_i^2-(q-q^{-1})\wh T_i-1=0$, $(\wh T_i-q)(\wh T_i+q^{-1})=0$, $\wh T_i-(q-q^{-1})=\wh T_i^{-1}$, $\wh T_iy_i\wh T_i=y_{i+1}$, and so $\wh T_iy_{i+1}=\wh T_i^2y_i\wh T_i=((q-q^{-1})\wh T_i+1)y_i\wh T_i=(q-q^{-1})y_{i+1}+y_i\wh T_i$.

If $j\not=i$, $i+1$, then $\wh T_i$ commutes with $y_j$ and with $\rho^{\otimes d}(Y_{jF}^{(d)})$. So it remains to show:
\[
\wh T_iy_i\otimes Y_{iF}^{(d)}+\wh T_iy_{i+1}\otimes Y_{i+1,F}^{(d)}=y_i\otimes Y_{iF}^{(d)}\wh T_i+y_{i+1}\otimes Y_{i+1,F}^{(d)}\wh T_i.
\]
Using the relations $\wh T_i$ satisfies, we see that the left side equals
\[
y_{i+1}\otimes \wh T_i^{-1} Y_{iF}^{(d)}+(q-q^{-1})y_{i+1}\otimes Y_{i+1,F}^{(d)}+y_i\wh T_i\otimes Y_{i+1,F}^{(d)}.
\]
Comparing to the right side we obtain
\[
y_i\otimes [\wh T_iY_{i+1,F}^{(d)}-Y_{i,F}^{(d)}\wh T_i]+y_{i+1}\otimes[\wh T_i^{-1}Y_{i,F}^{(d)}-Y_{i+1,F}^{(d)}\wh T_i^{-1}]=0.
\]
Hence it suffices to show: $\wh T_iY_{i+1,F}^{(d)}=Y_{i,F}^{(d)}\wh T_i$. Only two factors in the tensor product are affected, so we need only check that $\check R(\rho(\sigma)\otimes\rho(E_{\prod}))=(\rho(E_{\prod})\otimes\rho(K_{\prod}))\check R$.  Recall the explicit expression for $\check R$:
\[
\check R=\sum_{1\le i\le n'}(-1)^{p(\ove_i)}q^{(-1)^{p(\ove_i)}}E_{i,i}\otimes E_{i,i}
+\sum_{1\le i\not=j\le n'}(-1)^{p(\ove_i)}E_{j,i}\otimes E_{i,j}+\sum_{1\le i<j\le n'}(q-q^{-1})E_{i,i}\otimes E_{j,j}.
\]
and that $\rho(K_{\prod})=\diag(q,I,q)$, $\rho(E_{\prod})=E_{1n'}$ and $\rho(\sigma)=\diag(I_m,-I_n)$. The left side becomes
\[
\check R(\diag(I_m,-I_n)\otimes E_{1,n'})=qE_{11}\otimes E_{11}E_{1n'}+\sum_{i\not=j=1}(-1)^{p(\ove_i)}E_{1i}\otimes E_{i1}E_{1n'}
\]
and the right
\[
(E_{1n'}\otimes\diag(q,I,q))\check R=-q\cdot q^{-1}E_{1n'}E_{n'n'}\otimes E_{n'n'}+\sum_{i\not=j=n'}(-1)^{p(\ove_i)}E_{1n'}E_{n'i}\otimes E_{in'}\cdot q^{\delta(i,1)}.
\]
All terms in the sums are equal to one another, except that indexed by $i=n'$ in the first sum and that indexed by $i=1$ in the 2nd sum. The remaining two terms are $qE_{11}\otimes E_{1n'}+(-1)E_{1n'}\otimes E_{n'n'}$ in both cases, proving the required equality.

Similarly, to verify that $\rho_d(E_0)$ is well defined we need to show: 
\[
(\rho_d(E_0))(m\wh T_i\otimes v)=(\rho_d(E_0))(m\otimes\wh T_iv),
\]
i.e., that as operators on $J(M)=M\otimes_{H_d(q^2)}V^{\otimes d}$ we have
\[
\sum_{1\le j\le d}\wh T_iy_j^{-1}\otimes\rho^{\otimes d}(Y_{jE}^{(d)})=\sum_{1\le j\le d}y_j^{-1}\otimes\rho^{\otimes d}(Y_{jE}^{(d)})\wh T_i.
\]
If $j\not=i$, $i+1$, then $\wh T_i$ commutes with $y_j$ and with $\rho^{\otimes d}(Y_{jE}^{(d)})$. So it remains to show:
\[
\wh T_iy_i^{-1}\otimes Y_{iE}^{(d)}+\wh T_iy_{i+1}^{-1}\otimes Y_{i+1,E}^{(d)}=y_i^{-1}\otimes Y_{i+1,E}^{(d)}\wh T_i+y_{i+1}^{-1}Y_{i+1,E}^{(d)}\wh T_i.
\]
Using the relations $\wh T_i$ satisfies, we see this reduced to
\[
y_i^{-1}\otimes [Y_{i,E}^{(d)}(-\wh T_i+(q-q^{-1}))+\wh T_i^{-1}Y_{i+1,E}^{(d)}]+y_{i+1}^{-1} \otimes [-Y_{i+1,E}^{(d)}\wh T_i+\wh T_iY_{i,E}^{(d)}]=0.
\]
Hence it suffices to show: $Y_{i+1,E}^{(d)}\wh T_i=\wh T_iY_{i,E}^{(d)}$. Only two factors in the tensor product do not commute, so we are left with the need to show:
\[
(\rho(\sigma K_{\prod}^{-1})\otimes\rho(F_{\prod}))\check R=\check R(\rho(F_{\prod})\otimes 1),
\]
where $\rho(F_{\prod})=E_{n',1}$. The right side is
\[
\check R(E_{n',1}\otimes 1)=-q^{-1}E_{n',n'}E_{n',1}\otimes E_{n',n'}+\sum_{1\le j<n'=i}(-1)^{p(\ove_i)}E_{j,n'}E_{n',1}\otimes E_{n',j}(-1)^{1-p(\ove_j)},
\]
while the left side is $(\diag(I_m,-I_n)\diag(q^{-1},I,q^{-1})\otimes E_{n',1})\check R$
\[
=qq^{-1}E_{11}\otimes E_{n',1}E_{11}+\sum_{i=1<j\le n'}(-1)^{p(\ove_1)}E_{j,1}\otimes E_{n',1}E_{1,j}(-1)^{p(\ove_j)}q^{-\delta(j,n')}.
\]
All terms in the sums on both sides are equal except that indexed by $j=1$ on the right and $j=n'$ on the left, so the remaining two terms on both sides are equal to $E_{11}\otimes E_{n',1}-q^{-1}E_{n',1}\otimes E_{n',n'}$, proving the required equality.

\section{Relations $(\QS3)$, $(\QS4)(2)$}\label{13}
Consider the superbracket $[E_1,F_0]=E_1F_0-F_0E_1$. Then
\[
(\rho_d([E_1F_0-F_0E_1]))(m\otimes v)=\sum_{1\le j,k\le d}my_j\otimes\rho^{\otimes d}(X_{k,j})v
\]
where $X_{k,j}$ is $A_kB_j-B_jA_k$, we put $K_1$ for $K_{\alpha_1}$, and
\[
A_k=K_1^{\otimes (k-1)}\otimes E_1\otimes 1^{\otimes(d-k)},\quad B_j=\sigma^{\otimes (j-1)}\otimes E_{\prod}\otimes K_{\prod}^{\otimes (d-j)}.
\]
Recall that $\rho(E_1)=E_{12}$ and $\rho(E_{\prod})=E_{1,n'}$, $\rho(K_{\prod})=\diag(q,I,q)$, $\rho(K_1)=\diag(q,q^{-1},I)$, $\rho(\sigma)=\diag(I_m,-I_n)$. We apply $\rho^{\otimes d}$ but delete the $\rho$ from the notation for simplicity. Then $A_kB_j-B_jA_k$ is 0 if $j=k$ as $E_{12}E_{1,n'}=0=E_{1,n'}E_{12}$. It is easy to check that $A_k$ and $B_j$ commute when $j>k$. When $k>j$, all factors commute, except those at positions $k$ and $j$. At these two positions we get 
\[
K_1\otimes E_1\cdot E_{\prod}\otimes K_{\prod}-E_{\prod}\otimes K_{\prod}\cdot K_1\otimes E_1=qE_{1,n'}\otimes E_{12}-E_{1,n'}\otimes qE_{12}=0.
\]

Consider the superbracket $[E_0,F_0]=E_0F_0+F_0E_0$. Then
\[
(\rho_d([F_0,E_0]))(m\otimes v)=\sum_{1\le j,k\le d}my_j^{-1}y_k\otimes\rho^{\otimes d}(Y_{kE}^{(d)}\cdot Y_{jF}^{(d)}+Y_{jF}^{(d)}\cdot Y_{kE}^{(d)})v.
\]
Note that all factors in the tensor product in $Y_{kE}^{(d)}\cdot Y_{jF}^{(d)}+Y_{jF}^{(d)}\cdot Y_{kE}^{(d)}$ commute except those at the positions $j$, $k$. The terms corresponding to a pair $j<k$ add up to $\sigma K_{\prod}^{-1}\otimes F_{\prod}\cdot E_{\prod}\otimes K_{\prod}+E_{\prod}\otimes K_{\prod}\cdot \sigma K_{\prod}^{-1}\otimes F_{\prod}$. This equals $q^{-1}E_{1,n'}\otimes qE_{n',1}+(-1)q^{-1}E_{1,n'}\otimes qE_{n',1}=0$. If $k<j$ we get at the positions $(k,j)$ the sum $F_{\prod}\otimes 1\cdot \sigma\otimes E_{\prod}+\sigma\otimes E_{\prod}\cdot F_{\prod}\otimes 1=(F_{\prod}\sigma+\sigma F_{\prod})\otimes E_{\prod}$, and the dirst factor is 0.

When $j=k$ the term is
\[
(K_{\prod}^{-1})^{\otimes (j-1)}\otimes[E_{\prod} F_{\prod}+F_{\prod}E_{\prod}]\otimes
K_{\prod}^{\otimes (d-j)}.
\]
But 
\[
\rho(E_{\prod})\rho(F_{\prod})+\rho(F_{\prod})\rho(E_{\prod})=E_{1n'}E_{n'1}+ E_{n'1}E_{1n'}=\diag(1,0,\dots,0,1)=\frac{\rho(K_{\prod}-K_{\prod}^{-1})}{q-q^{-1}},
\]
and $(\QS3)$ follows.

To see that the relation $[F_0,F_0]=0$, namely $F_0^2=0$, is preserved by $\rho_d$, we consider 
\[
\rho_d(F_0)^2(m\otimes v)=\sum_{j,k}my_ky_j\otimes\rho^{\otimes d}(Y_{jF}^{(d)}\cdot Y_{kF}^{(d)})v.
\] 
It suffices to look at the factors in the tensor product where $E_{\prod}$ occur, as the other factors commute. Applying $\rho^{\otimes 2}$ to $E_{\prod}\otimes K_{\prod}\cdot\sigma\otimes E_{\prod}+\sigma\otimes E_{\prod}\cdot E_{\prod}\otimes K_{\prod}$ we get 
\[
E_{1n'}\diag(I,-I)\otimes\diag(q,I,q)E_{1n'}+\diag(I,-I)E_{1n'}\otimes E_{1n'}\diag(q,I,q),
\]
which is 0, when $j\not=k$. When $j=k$ we have $\rho(E_{\prod})^2=E_{1n'}^2=0$.

\section{Relations $(\QS4)(3)$}\label{14}
Next we verify that the relation relation $(\QS4)(3)$: $\br E_i,\br E_i,E_{i\pm 1}\ebr\ebr=0,$ $0\not=i\not=m,$ is preserved by $\rho_d$. This has to be verified only when one of the indices is 0. The two relations are $\br E_1,\br E_1,E_0\ebr\ebr=0$ and $\br E_{n'{}'},\br E_{n'{}'},E_0\ebr\ebr =0$. By the definition of $\br.,.\ebr$, since $-(\alpha_1,\alpha_0)=-(\ove_1-\ove_2,\ove_{n'}-\ove_1)=(\ove_1,\ove_1)=1$ and 
\[
-(\alpha_1,\alpha_1+\alpha_0)=-(\ove_1-\ove_2,\ove_1-\ove_2+\ove_{n'}-\ove_1)=-(\ove_2,\ove_2)=-1,
\] 
the first relation becomes
\[
0=[E_1,[E_1,E_0]_q]_{q^{-1}}=E_1(E_1E_0-qE_0E_1)-q^{-1}(E_1E_0-qE_0E_1)E_1\]
\[
=E_1^2E_0-(q+q^{-1})E_1E_0E_1+E_0E_1^2.
\]
Then to show vanishing of
\[
(\rho_d([E_1,[E_1,E_0]_q]_{q^{-1}}))(m\otimes v)=\sum_jmy_j^{-1}\otimes[\rho_d(E_1),[\rho_d(E_1),\rho^{\otimes d}(Y_{jE}^{(d)})]_q]_{q^{-1}}v,
\]
it suffices to show the vanishing of $\rho^{\otimes d}$ of $[\Delta^{(d-1)}(E_1),[\Delta^{(d-1)}(E_1),Y_{j,E}^{(d)}]_q,]_{q^{-1}}$. When $d=1$ this leads to $\rho([E_1,[E_1,E_0]_q]_{q^{-1}})=[E_{12},[E_{12},E_{n',1}]_q]_{q^{-1}}$, which is 0 since $E_{12}^2=0$, $E_{12}E_{n',1}=0$. When $d=2$ we are led to 
\[
[\Delta(E_1),[\Delta(E_1),F_{\prod}\otimes 1+\sigma K_{\prod}^{-1}\otimes F_{\prod}]_q]_{q^{-1}}\]
\[
=[E_1\otimes 1+K_1\otimes E_1,[E_1\otimes 1+K_1\otimes E_1,\sigma K_{\prod}^{-1}\otimes F_{\prod}]_q]_{q^{-1}}
\]
\[
+[E_1\otimes 1+K_1\otimes E_1,[E_1\otimes 1+K_1\otimes E_1,F_{\prod}\otimes 1]_q]_{q^{-1}}.
\]
Apply $\rho^{\otimes 2}$. The $[.,.]_q$ of the first summand is
\[
E_{12}\otimes E_{n',1}-q(q^{-1} E_{12}\otimes E_{n',1}+\sigma\diag(1,q^{-1},I,q^{-1})\otimes E_{n',2})=-q\sigma\diag(1,q^{-1},I,q^{-1})\otimes E_{n',2},
\]
so the $[.,.]_{q^{-1}}$ is $-E_{12}\otimes E_{n',2}+E_{12}\otimes E_{n',2}=0$. The $[.,.]_q$ of the second summand is
\[
E_{n',1}\otimes E_{12}-q(E_{n',2}\otimes 1+E_{n',1}\otimes E_{12})=(1-q^2)E_{n',1}\otimes E_{12}-qE_{n',2}\otimes 1,
\]
so the $[.,.]_{q^{-1}}$ is $-qE_{n',2}\otimes E_{12}-q^{-1}((1-q^2)E_{n',2}\otimes E_{12}-q\cdot q^{-1}E_{n',2}\otimes E_{12})=0$.

In general, we need to verify that after applying $\rho^{\otimes d}$, that we shall omit to simplify the notation, the sum $\sum_{1\le s,t\le d}a(s,t,j)$ is mapped to zero for each $j$, where
\[
a(s,t,j)=[K_1^{\otimes (s-1)}\otimes E_1\otimes 1^{\otimes (d-s)},
[K_1^{\otimes (t-1)}\otimes E_1\otimes 1^{\otimes (d-t)},(\sigma K_{\prod}^{-1})^{\otimes (j-1)}\otimes E_{n',1}\otimes 1^{\otimes (d-j)}]_q]_{q^{-1}}.
\]
Fix $j$. The term $s=t=j$ is zero since this case reduces to that of $d=1$, as the components at all other positions commute. So ($\rho^{\otimes 3}$ of) $a(j,j,j)=0$.

Fix $j'\not=j$. If $s,\,t$ range over the set $\{j,\,j'\}$, this reduces to the case of $d=2$, for the same reason. In particular, the sum of the terms $a(j',j,j)$, $a(j,j',j)$, $a(j',j',j)$ is zero.

Fix $\{j'{}',\,j'\}$, $j\not=j'\not=j'{}'\not=j$. It remains to show that $a(j',j'{}',j)+a(j'{}',j',j)=0$ for all triples $\{j,\,j'{}',\,j'\}$. As the components in the other positions commute, it suffices to consider the case where $d=3$. There are 3 cases: $j=1$, 2, 3. Consider $j=1$. We have
\[
[K_1\otimes E_1\otimes 1,[K_1\otimes K_1\otimes E_1,E_{n',1}\otimes 1\otimes 1]_q]_{q^{-1}}\qquad (s=2,\quad t=3).
\]
We first compute the inner bracket $[.,.]_q$ using $K_1E_{n',1}-qE_{n',1}K_1=(1-q^2)E_{n',1}$. Then using $E_1K_1=q^{-1}E_1$, $K_1E_1=qE_1$, this term is seen to be
$(1-q^2)(q^{-1}-q)E_{n',1}\otimes E_1\otimes E_1$. The term corresponding to $s=3$, $t=2$ is
\[
[K_1\otimes K_1\otimes E_1,[K_1\otimes E_1\otimes 1,E_{n',1}\otimes 1\otimes 1]_q]_{q^{-1}}=(1-q^2)(q-q^{-1})E_{n',1}\otimes E_1\otimes E_1
\]
by similar computations, so the sum of these two terms is zero. When $j=2$, for $s=1$, $t=3$ we have
\[
[E_1\otimes 1\otimes 1,[K_1\otimes K_1\otimes E_1,\sigma K_{\prod}^{-1}\otimes E_{n',1}\otimes 1]_q]_{q^{-1}}
\]
\[
=(1-q^2)(E_1K_1\sigma K_{\prod}^{-1}-q^{-1}K_1\sigma K_{\prod}^{-1}E_1)\otimes E_{n' 1}\otimes E_1=0,
\]
and for $s=3$, $t=1$
\[
[K_1\otimes K_1\otimes E_1,[E_1\otimes 1\otimes 1,\sigma K_{\prod}^{-1}\otimes E_{n',1}\otimes 1]_q]_{q^{-1}}
\]
is zero since the first component in the inner $[.,.]_q$ is $E_1\sigma K_{\prod}^{-1}-q\sigma K_{\prod}^{-1} E_1=E_1-q\cdot q^{-1}E_1=0$. Finally, when $j=3$, 
\[
[E_1\otimes 1\otimes 1,[K_1\otimes E_1\otimes 1,\sigma K_{\prod}^{-1}\otimes\sigma K_{\prod}^{-1}\otimes E_{n',1}]_q]_{q^{-1}}
\]
is 0 since the 3rd component at the inner $[.,.]_q$ is $E_1\sigma K_{\prod}^{-1}-q\sigma K_{\prod}^{-1} E_1=0$, and
\[
[K_1\otimes E_1\otimes 1,[E_1\otimes 1\otimes 1,\sigma K_{\prod}^{-1}\otimes\sigma K_{\prod}^{-1}\otimes E_{n',1}]_q]_{q^{-1}}=0
\]
since the first component in the inner $[.,.]_q$ is again $E_1\sigma K_{\prod}^{-1}-q\sigma K_{\prod}^{-1} E_1=0$.\\

We also need to check that the relation 
\[
0=\br E_{n'{}'},\br E_{n'{}'},E_0\ebr\ebr=[E_{n'{}'},[E_{n'{}'},E_0]_q]_{q^{-1}}
\]
(second equality from $-(\alpha_{n'{}'},\alpha_0)=-(\ove_{n'{}'}-\ove_{n'},-\ove_1+\ove_{n'})=1$, $-(\alpha_{n'{}'},\alpha_0+\alpha_{n'{}'})=-1$)
is preserved by $\rho_d$, namely that so is $[\Delta^{(d-1)}(E_{n'{}'}),[\Delta^{(d-1)}(E_{n'{}'}),E_0]_q]_{q^{-1}}=0$. Recall that
\[
\Delta^{(d-1)}(E_{n'{}'})=\sum_{0\le j\le d-1}(\sigma^{p(\alpha_{n'{}'})}K_{\alpha_{n'{}'}})^{\otimes j}\otimes E_{n'{}'}\otimes 1^{\otimes (d-1-j)}.
\]
Recall that $p(\alpha_{n'{}'})=p(\alpha_i)=0$ if $i\not=0,\,m$, and $p(\alpha_0) = p(\alpha_m) =1$. The verification of this case is similar to that of the previous case, and is left to the reader.

This completes the verification that the relations $(\QS4)(3)$ are preserved under $\rho_d$. 

The relations $(\QS5)(3)$, in which the $E$ are replaced by $F$, are verified by analogous computations.

\section{Relations $(\QS4)(4')$}\label{15}
Finally we need to theck that the relation $(\QS4)(4')$, which is $[\br\br E_{n'{}'},E_0 \ebr,E_1\ebr,E_0]=0$, equivalently $[\br\br E_1,E_0\ebr,E_{n'{}'}\ebr,E_0]=0$, is preserved under $\rho_d$. Consider the last relation. Since 
\[
-(\alpha_1,\alpha_0)=-(\ove_1-\ove_2,\ove_{n'{}'}-\ove_1)=(\ove_1,\ove_1)=1,\quad \br E_1,E_0\ebr=[E_1,E_0]_q=E_1E_0-qE_0E_1.
\] 
Since 
\[
-(\alpha_1+\alpha_0,\alpha_{n'{}'})=-(\ove_1-\ove_2+\ove_{n'}-\ove_1,\ove_{n'{}'}-\ove_{n'})=(\ove_{n'},\ove_{n'})=-1,
\]
the $\br\br E_1,E_0\ebr,E_{n'{}'}\ebr$ is $[[E_1,E_0]_q,E_{n'{}'}]_{q^{-1}}$, and the remaining bracket, $[\ast,E_0]$, is $\ast E_0+E_0\ast$, since $p(E_1E_0E_{n'{}'})=1$ and $p(E_0)=1$. We need to show then
\[
[[[\rho_d(E_1),\rho_d(E_0)]_q,\rho_d(E_{n'{}'})]_{q^{-1}},\rho_d(E_0)]=0.
\]
As
\[
\rho_d(E_1)=\rho^{\otimes d}(\Delta^{(d-1)}(E_1))=\sum_{0\le j<d} K_{\alpha_1}^{\otimes j}\otimes\rho(E_1)\otimes 1^{\otimes (d-1-j)}
\]
and
\[
(\rho_d(E_0))(m\otimes v)=\sum_{1\le j\le d}my_j^{-1}\otimes\rho^{\otimes d}(Y_{jE}^{(d)})v,
\quad Y_{jE}^{(d)}=(\sigma K_{\prod}^{-1})^{\otimes (j-1)}\otimes F_{\prod}\otimes 1^{\otimes (d-j)},
\]
we need consider the sum of terms of the form (as before, to simplify the notation, by $E_1$, $K_1=K_{\alpha_1}$, $F_{\prod}$, $K_{\prod}$, $\sigma$, $E_{n'{}'}$, $K_{n'{}'}$ we mean below their images under $\rho$: $E_{12}$, $\diag(q,q^{-1},I)$, $E_{n',1}$, $\diag(q,I,q)$, $\diag(I_m,-I_n)$, $E_{n'{}',n'}$, $\diag(I,q^{-1},q)$)
\[
a(j_1,j_2,j_3,j_4)=[[[K_1^{\otimes (j_1-1)}\otimes E_1\otimes 1^{\otimes (d-j_1)}, (\sigma K_{\prod}^{-1})^{\otimes (j_2-1)}\otimes F_{\prod}\otimes 1^{\otimes (d-j_2)}]_q,
\]
\[
(\sigma K_{n'{}'})^{\otimes (j_3-1)}\otimes E_{n'{}'}\otimes 1^{\otimes (d-j_3)}]_{q^{-1}},(\sigma K_{\prod}^{-1})^{\otimes (j_4-1)}\otimes F_{\prod}\otimes 1^{\otimes (d-j_4)}].
\]

To keep track of the accounting, the procedure will be to fix $(j_2,j_4)$, and consider the sum of the terms $a$ for all the possibilities for $j_1$, $j_3$. In all cases the sum is zero. There are too many cases to record all computations here, but the technique is as in the previous section. We describe a few cases. If all $j_i$ are equal to the same $j$, then we may assume $d=1$, as the other components in the tensor product commute. In this case we are reduced to the computation (recall that we apply $\rho$ although this is omitted from the notation):
\[
[[[E_1,F_{\prod}]_q,E_{n'{}'}]_{q^{-1}},F_{\prod}].
\]
The inner bracket is $[E_{12},E_{n',1}]_q=-qE_{n',2}$. The bracket of this with $E_{n'{}'}=E_{n'{}',n'}$ is $E_{n'{}',2}$, and this bracketed with $F_{\prod}=E_{n',1}$ is 0 as $E_{n'{}',2}E_{n',1}=0=E_{n',1}E_{n'{}',2}$. 

Next we consider the case of $j_2=j_4=j$, and $j_1$, $j_3$ in $\{j,\,j'\}$. We may work with $d=2$, so $j=1$ or 2. When $j=1$, $j_1=1$, $j_3=2$, we get
\[
[[[E_{12}\otimes 1,E_{n',1}\otimes 1]_q,\sigma K_{n'{}'}\otimes E_{n'{}'}]_{q^{-1}},E_{n',1}\otimes 1]=q(1-q^{-2})[E_{n',2}\otimes E_{n'{}'},E_{n',1}\otimes 1]=0.
\]
When 
$j=1$, $j_1=2$, $j_3=1$, we get
\[
[[[K_1\otimes E_1,F_{\prod}\otimes 1]_q,E_{n'{}'}\otimes 1]_{q^{-1}},F_{\prod}\otimes 1]=-q^{-1}(1-q^2)[E_{n'{}',1}\otimes E_1,E_{n',1}\otimes 1]=0.
\]
And when $j_1=2=j_3$,
\[
[[[K_1\otimes E_1,F_{\prod}\otimes 1]_q,\sigma K_{n'{}'}\otimes E_{n'{}'}]_{q^{-1}},F_{\prod}\otimes 1]
\]
is zero since $[.,.]_q$ is $(1-q^2)E_{n',1}\otimes E_{12}$, and the bracket of $E_{12}$ with $E_{n'{}'}$ is zero.

When $j=2$, $j_1=1$, $j_3=2$:
\[
[[[E_1\otimes 1,\sigma K_{\prod}^{-1}\otimes F_{\prod}]_q,\sigma K_{n'{}'}\otimes E_{n'{}'}]_{q^{-1}},\sigma K_{\prod}^{-1}\otimes F_{\prod}]
\]
is zero since $[.,.]_q$ is $(E_1-q\cdot q^{-1}E_1)\otimes F_{\prod}=0$. When $j=2$, $j_1=2$, $j_3=1$:
\[
[[[K_1\otimes E_1,\sigma K_{\prod}^{-1}\otimes F_{\prod}]_q,E_{n'{}'}\otimes 1]_{q^{-1}},\sigma K_{\prod}^{-1}\otimes F_{\prod}]
\]
vanishes since $[.,.]_{q^{-1}}$ is $\ast\otimes E_{n',2}$, and $E_{n',2}$ times $F_{\prod}=E_{n',1}$ (on the right and on the left) is 0. The last case, where $j_1=1=j_3$ is zero as the $[.,.]_q$ is the same as in the case $j=2$, $j_1=1$, $j_3=2$.

If $j_2=j_4=j$ and $j_1$, $j_3\not=j$ then we may work with $d=3$. Thus if $j=1$, $(j_1,j_3)$ is (2,3) or (3,2). If $j=2$, $(j_1,j_3)$ is (1,3) or (3,1). If $j=3$, $(j_1,j_3)$ is (1,2) or (2,1).

If $j_2\not=j_4$, and $j_1$, $j_3\in\{j_2,j_4\}$, then $(j_2,j_4)=(1,2)$ and $(j_1,j_3)=(1,2)$ and $(2,1)$, or $(j_2,j_4)=(2,1)$ and $(j_1,j_3)=(1,2)$ and $(2,1)$. If $j_1$, $j_3\in\{j_2,j_4,j'\}$ but not both in $\{j_2,j_4\}$, then we can work with $d=3$. The pair $(j_1,j_3)$ is $(j_2,j')$, $(j_4,j')$, $(j',j_2)$, $(j',j_4)$, that is, one of $j_1$, $j_3$ is in $\{j_2,\,j_4\}$, the other is not.

When $j_2\not=j_4$, and $j_1$, $j_3\notin\{j_2,j_4\}$, then we work in $d=3$ if $j_1=j_3$ and with $d=4$ if not. In particular it suffices to work with $d\le 4$, and in each case the computation is reduced to a simple matrix multiplication, that can be verified by hand or by machine.

This computation verifies $(\QS4)(4')$. The verification of the cases of $(\QS5)$, where the generators $E$ are replaced by the generators $F$, is similar.

We conclude that the formulae for $\rho_d(E_0)$ and $\rho_d(F_0)$ then define a representation of $U_{q,\AI}^\sigma(\EE,\Pi,p,\Gamma)$. 

If $f:M\to M'$ is a homomorphism of $H_d^a(q^2)$-modules, define $\FF(f):\,\FF(M)\to\FF(M')$ by $(\FF(f))(m\otimes v)=f(m)\otimes v$. Then $\FF(f)$ is a well-defined homomorphism of $U_{q,\AI}^\sigma(\EE,\Pi,p)$-modules, so that $\FF$ is a functor between the categories of representations as specified in the theorem.

\section{The functor $\FF$ is an equivalence}\label{16}
Assume from now on that $d< n'$. To show that the functor $\FF$ -- which we have seen is a well-defined functor between the categories specified in the theorem -- is an equivalence, one has to show:\\
$(a)$ Every finite dimensional $U_{q,\AI}^\sigma(\EE,\Pi,p,\Gamma)$-module $W$ which is completely reducible and each of its irreducible constituents is a constituent of $V^{\otimes d}$ is isomorphic to $\FF(M)=M\otimes_{H_d(q^2)}V^{\otimes d}$ for some $H_d^a(q^2)$-module $M$.\\
$(b)$ $\FF$ is bijective on sets of morphisms.

To prove $(a)$, by Proposition \ref{P11.1} we assume that $W=J(M)$ for some $H_d(q^2)$-module $M$. We shall construct the action of the $y_j^{\pm 1}$ on $M$ from the given action of $\rho_d(E_0)$, $\rho_d(F_0)$, $\rho_d(\HH)$ on $W$.

\begin{lem}\label{L16.1} 
$(a)$ Let $M$ be a finite dimensional $H_d(q^2)$-module. Fix $v\in V^{\otimes d}$. Suppose that the projection of $v$ to each isotypical component of $J(M)$ is nonzero. Then the map $M\to J(M)$, $m\mapsto m\otimes v$, is injective.\\
$(b)$ Recall that $\{\ove_1,\dots,\ove_{n'}\}$ denotes the standard basis of $V$. Suppose $v=\ove_{i_1}\otimes\dots\otimes\ove_{i_d}\in V^{\otimes d}$, where $i_1,\dots,i_d\in\{1,\dots,n'\}$ are distinct. Then $V^{\otimes d}=U_{q,\AI}^\sigma(\EE,\Pi_0,p,\Gamma)\cdot v$, where $\Pi_0=\{\alpha_1,\dots,\alpha_{n'{}'}\}$. In particular $v$ satisfies the condition stated in $(a)$.
\end{lem}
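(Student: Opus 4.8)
The plan is to prove (a) first as a soft statement about semisimple modules, then deduce (b) by an explicit computation showing that the monomial $v$ generates $V^{\otimes d}$ over the finite quantum superalgebra $U_{q,\AI}^\sigma(\EE,\Pi_0,p,\Gamma)$, which by the already established decomposition of Proposition \ref{P10.2} (the bimodule decomposition $V^{\otimes d}=\oplus_\lambda H_\lambda\otimes V(\lambda)$) forces $v$ to have nonzero projection onto every isotypical component of $V^{\otimes d}$, hence onto every isotypical component of $J(M)=M\otimes_{H_d(q^2)}V^{\otimes d}$.

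For part (a): write $J(M)=\bigoplus_\lambda J(M)_\lambda$ as a direct sum of isotypical components of the (semisimple, by Proposition \ref{P4.1}) $U_q^\sigma(\sl(m,n))$-action, and correspondingly $M=\bigoplus_\lambda M_\lambda$ as $H_d(q^2)$-modules via the equivalence $J$ of Proposition \ref{P11.1}, so $J(M)_\lambda = M_\lambda\otimes_{H_d(q^2)} V^{\otimes d}$ and in fact $J(M)_\lambda\cong M_\lambda\otimes_{\C(q)}(\text{mult. space})\otimes V(\lambda)$ using the bimodule decomposition $V^{\otimes d}=\oplus_\mu H_\mu\otimes V(\mu)$. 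Decompose $v=\sum_\mu v_\mu$ along $V^{\otimes d}=\oplus_\mu H_\mu\otimes V(\mu)$; the hypothesis that the projection of $v$ to each isotypical component of $J(M)$ is nonzero means precisely that $v_\mu\neq 0$ whenever $M_\mu\neq 0$. Suppose $m\otimes v=0$ in $J(M)$; projecting to the $\mu$-component gives $m_\mu\otimes v_\mu=0$ in $M_\mu\otimes_{H_d(q^2)}V^{\otimes d}$ for each $\mu$. Since $H_d(q^2)$ is semisimple (Proposition \ref{P7.2}) and $M_\mu$ is a direct sum of copies of the simple module $H_\mu$ while $V^{\otimes d}\cong\bigoplus_\mu H_\mu^{\oplus\dim V(\mu)}$ as a left $H_d(q^2)$-module, the pairing $M_\mu\times V^{\otimes d}\to M_\mu\otimes_{H_d(q^2)}V^{\otimes d}$ is the obvious one on isotypic pieces; because $v_\mu\neq 0$ it represents a nonzero element of $H_\mu$ inside $V^{\otimes d}$, and tensoring the simple module $H_\mu$ (over the semisimple algebra $H_d(q^2)$, hence $H_\mu$ is a direct summand, hence flat) with a nonzero vector is injective on $M_\mu$. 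Therefore $m_\mu=0$ for all $\mu$, so $m=0$, proving injectivity.

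For part (b): here is where the work lies. With $v=\ove_{i_1}\otimes\cdots\otimes\ove_{i_d}$ and the indices $i_1,\dots,i_d$ distinct (possible since $d<n'$), I will show $U_{q,\AI}^\sigma(\EE,\Pi_0,p,\Gamma)\cdot v=V^{\otimes d}$. First, acting by the torus $T$, which acts diagonally with distinct characters on the monomial basis $\{\ove_{j_1}\otimes\cdots\otimes\ove_{j_d}\}$ in a way that separates weights, reduces the problem: it suffices to show the submodule generated by $v$ contains every monomial $\ove_{j_1}\otimes\cdots\otimes\ove_{j_d}$. Using the explicit formulas $\rho(E_i)=q_{i+1}^{-1}E_{i,i+1}$, $\rho(F_i)=q_{i+1}E_{i+1,i}$ (which raise/lower a single index by $1$) together with the comultiplication formula $\rho_d(E_i)=\sum_k \rho(\sigma^{p(\alpha_i)}K_{\alpha_i})^{\otimes(k-1)}\otimes\rho(E_i)\otimes 1^{\otimes(d-k)}$, one shows by induction on the ``distance'' from $v$ (say, $\sum_r |i_r-j_r|$, after matching up the sorted tuples) that each monomial is reachable: at each step there is an index position $k$ where $j_k$ differs from the target, and applying the appropriate $\rho_d(E_i)$ or $\rho_d(F_i)$ produces, among other monomials, the desired closer monomial, while the unwanted monomials produced (those hit at other positions $k'$) are handled by the weight-space separation from the torus action or by an inductive bookkeeping that peels them off. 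The distinctness of $i_1,\dots,i_d$ is exactly what guarantees that at the start no two positions can ``collide'' and that the weights stay multiplicity-free along the way, so one never loses control. Once every monomial lies in $U_{q,\AI}^\sigma(\EE,\Pi_0,p,\Gamma)\cdot v$, we have $V^{\otimes d}=U_{q,\AI}^\sigma(\EE,\Pi_0,p,\Gamma)\cdot v$; then $v$ projects nontrivially to every $U_q^\sigma(\sl(m,n))$-irreducible constituent of $V^{\otimes d}$, hence (via the bimodule decomposition, and since each occurring $V(\lambda)$ has multiplicity $\dim H_\lambda\geq 1$) to every isotypical component of $J(M)$, so the hypothesis of (a) is satisfied.

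The main obstacle is the inductive argument in (b) controlling the ``unwanted'' monomials produced when $\rho_d(E_i)$ or $\rho_d(F_i)$ acts as a sum over all tensor positions: one must organize the induction (on a suitable norm on tuples, or on the dominance order of weights) so that the cross terms either vanish, lie in previously constructed subspaces, or are separated off by the torus, and the distinctness hypothesis $d<n'$ is the hinge that makes this bookkeeping go through. Everything else — part (a), and the passage from ``generates $V^{\otimes d}$'' to ``nonzero on every isotypical component of $J(M)$'' — is formal given Propositions \ref{P4.1}, \ref{P7.2}, \ref{P10.2} and \ref{P11.1}.
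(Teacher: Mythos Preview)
Your argument for (a) is correct and is exactly what the paper intends: it says ``(a) follows from Proposition \ref{P11.1}'' (citing \cite[Lemma 4.3]{cp96}), and your use of semisimplicity plus the bimodule decomposition is the standard unpacking of that reference.

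For (b) your overall plan---show that $U_q^\sigma\cdot v=V^{\otimes d}$ by explicit generation, then deduce the isotypic condition---is also what the paper has in mind (it simply writes ``(b) is clear''). But your description of the obstacle is mislocated. When the current monomial has \emph{distinct} indices, $\rho_d(E_i)$ and $\rho_d(F_i)$ hit at most one tensor position (the unique factor carrying $\ove_{i+1}$, respectively $\ove_i$), so they produce a single monomial, not a sum; there are no ``unwanted monomials'' at that stage. Cross-terms appear only once repeated indices are created---which is unavoidable, since $V^{\otimes d}$ contains $\ove_1^{\otimes d}$---and then your proposed remedy of torus separation fails, because monomials with the same index multiset share the same weight. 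Your claim that ``the weights stay multiplicity-free along the way'' is therefore not correct, and the vague ``inductive bookkeeping'' needs to be made precise. Note also that $\rho_d(E_m)^2=0$, so one cannot simply push all indices across the odd node by repeated application of $E_m$; interleaving is required.

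A cleaner route, closer in spirit to why the paper regards (b) as clear: the $\mu_v$-weight space (where $\mu_v=\varepsilon_{i_1}+\cdots+\varepsilon_{i_d}$) is, as a left $H_d(q^2)$-module via the $\check R_i$, the regular representation, and the pure tensor $v$ corresponds to an invertible element $T_\sigma$. Hence in the bimodule decomposition $V^{\otimes d}=\oplus_\lambda H_\lambda\otimes V(\lambda)$ the projection $v_\lambda\in H_\lambda\otimes V(\lambda)_{\mu_v}$ has full rank in the $H_\lambda$ factor for every $\lambda$. Since every $V(\lambda)$ with $\lambda\in\Gamma(m,n;d)$ has $V(\lambda)_{\mu_v}\neq 0$ (any $d$ distinct labels fill any shape of size $d$ as a standard tableau), it follows that the $U_q^\sigma$-submodule generated by $v$ meets every copy of every $V(\lambda)$, i.e.\ equals $V^{\otimes d}$. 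This avoids the monomial-by-monomial induction entirely.
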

\begin{proof}
As in \cite[Lemma 4.3]{cp96}, $(a)$ follows from Proposition \ref{P11.1}, and $(b)$ is clear.
\end{proof}
\begin{lem}\label{L16.2} 
$(a)$ For $j$ $(1\le j<n')$ put $a(j)=\ove_2\otimes\dots\otimes\ove_j$, $b(j)=\ove_{j+1}\otimes\dots\otimes\ove_d$,
\[
v^{(j)}=a(j)\otimes\ove_{n'}\otimes b(j),\qquad w^{(j)}=a(j)\otimes\ove_1\otimes b(j).
\]
Then there exists $\alpha_{jF}\in\End_{\C}M$ with 
\[
(\rho_d(F_0))(m\otimes v^{(j)})=\alpha_{jF}(m)\otimes\rho^{\otimes d}(Y_{jF}^{(d)})v^{(j)}
\]
and 
$\alpha_{jE}\in\End_{\C}M$ with 
\[
(\rho_d(E_0))(m\otimes w^{(j)})=\alpha_{jE}(m)\otimes\rho^{\otimes d}(Y_{jE}^{(d)})w^{(j)}.
\]
We have $\rho^{\otimes d}(Y_{jF}^{(d)})v^{(j)}=\pm w^{(j)}$, and $\rho^{\otimes d}(Y_{jE}^{(d)})w^{(j)}=\pm v^{(j)}$.
\end{lem}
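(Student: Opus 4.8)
The plan is to unwind the definitions of $\rho_d(F_0)$ and $\rho_d(E_0)$ on the specific, carefully chosen vectors $v^{(j)}$ and $w^{(j)}$, and to observe that because the tensor factors are basis vectors with \emph{distinct} indices, almost all of the $d$ summands in the defining formulas act by zero. First I would recall that
\[
(\rho_d(F_0))(m\otimes v)=\sum_{1\le k\le d}my_k\otimes\rho^{\otimes d}(Y_{kF}^{(d)})v,\qquad Y_{kF}^{(d)}=\sigma^{\otimes(k-1)}\otimes E_{\prod}\otimes K_{\prod}^{\otimes(d-k)},
\]
and that $\rho(E_{\prod})=E_{1,n'}$ kills every basis vector $\ove_i$ with $i\ne n'$ and sends $\ove_{n'}$ to $\ove_1$. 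In $v^{(j)}=a(j)\otimes\ove_{n'}\otimes b(j)$ the unique slot carrying $\ove_{n'}$ is the $j$th one (since $a(j)=\ove_2\otimes\dots\otimes\ove_j$ and $b(j)=\ove_{j+1}\otimes\dots\otimes\ove_d$ use only indices $\ne n'$, because $d<n'$). Hence the only surviving term in the sum is $k=j$, giving
\[
(\rho_d(F_0))(m\otimes v^{(j)})=my_j\otimes\rho^{\otimes d}(Y_{jF}^{(d)})v^{(j)},
\]
so one may take $\alpha_{jF}(m)=my_j$, which is visibly $\C$-linear in $m$. The identical argument with $\rho(F_{\prod})=E_{n',1}$ applied to $w^{(j)}=a(j)\otimes\ove_1\otimes b(j)$, using that the $j$th slot is the unique one carrying $\ove_1$ (again $b(j)$ and $a(j)=\ove_2\otimes\dots\otimes\ove_j$ avoid the index $1$), isolates the $k=j$ term of $\rho_d(E_0)$ and yields $\alpha_{jE}(m)=my_j^{-1}$.

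Next I would compute $\rho^{\otimes d}(Y_{jF}^{(d)})v^{(j)}$ explicitly. In slot $j$, $\rho(E_{\prod})=E_{1,n'}$ sends $\ove_{n'}\mapsto\ove_1$; in slots $<j$, $\rho(\sigma)=\diag(I_m,-I_n)$ acts on the basis vector there by the scalar $(-1)^{p(\ove_i)}$; in slots $>j$, $\rho(K_{\prod})=\diag(q,I,q)$ acts by a power of $q$. Thus $\rho^{\otimes d}(Y_{jF}^{(d)})v^{(j)}=c\cdot w^{(j)}$ for a scalar $c$; I must check $c=\pm1$. The powers of $q$ coming from $K_{\prod}=\diag(q,I,q)$ are nontrivial only on slots holding $\ove_1$ or $\ove_{n'}$, but all slots of $v^{(j)}$ after position $j$ are $\ove_{j+1},\dots,\ove_d$ with indices strictly between $1$ and $n'$ (here $d<n'$ is used again), so every such factor is $1$; the sign contributions from $\sigma$ are each $\pm1$; hence $c=\pm1$. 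Symmetrically $\rho^{\otimes d}(Y_{jE}^{(d)})w^{(j)}=\pm v^{(j)}$, using $\rho(K_{\prod}^{-1})=\diag(q^{-1},I,q^{-1})$ and that the relevant slots again avoid indices $1$ and $n'$.

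There is essentially no serious obstacle here: the lemma is a direct unwinding, and the only thing to be careful about is the bookkeeping that guarantees the stray $q$-powers cancel — which is precisely where the hypothesis $d<n'$ enters, since it forces the ``filler'' tensor slots to carry indices in $\{2,\dots,n'-1\}$, on which $K_{\prod}$ and $K_{\prod}^{-1}$ act trivially. I would state this $d<n'$ reduction once at the start of the proof and then treat the two halves ($F_0$ on $v^{(j)}$, $E_0$ on $w^{(j)}$) in parallel. The $\C$-linearity of $\alpha_{jF},\alpha_{jE}$ is immediate from the formulas $\alpha_{jF}(m)=my_j$, $\alpha_{jE}(m)=my_j^{-1}$, so no further argument is needed for that clause.
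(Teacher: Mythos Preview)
Your computation of $\rho^{\otimes d}(Y_{jF}^{(d)})v^{(j)}=\pm w^{(j)}$ and $\rho^{\otimes d}(Y_{jE}^{(d)})w^{(j)}=\pm v^{(j)}$ is fine, but the main content of the lemma --- the existence of $\alpha_{jF}$, $\alpha_{jE}$ --- is based on a misreading of the logical context. At this point in Section~\ref{16} we are proving part~(a) of the equivalence: we are \emph{given} an abstract $U_{q,\AI}^\sigma$-module $W$, and by Proposition~\ref{P11.1} we know only that $W=J(M)$ for some $H_d(q^2)$-module $M$ (finite Hecke algebra). There is no $y_j$ acting on $M$ yet; constructing that action is precisely the goal of Lemmas~\ref{L16.2}--\ref{L16.4}. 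The formula
\[
(\rho_d(F_0))(m\otimes v)=\sum_{1\le k\le d}my_k\otimes\rho^{\otimes d}(Y_{kF}^{(d)})v
\]
that you invoke is the \emph{definition} of $\rho_d(F_0)$ in Theorem~\ref{T11.2}, i.e.\ when going from $H_d^a(q^2)$-modules to $U_{q,\AI}^\sigma$-modules. Here we are going the other way: $\rho_d(F_0)$ is simply the action of the generator $F_0$ on the given module $W$, and we must deduce that it has this shape. Your argument is therefore circular.

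The paper's proof proceeds differently: it uses the relation $(\QS2)$ to see that $\rho_d(F_0)(m\otimes v^{(j)})$ lies in the weight space of weight $\lambda_d=\ove_1+\dots+\ove_d$; this weight space is spanned by the vectors $w_\tau^{(j)}$, $\tau\in S_d$, each of which is a scalar multiple of $h\cdot w^{(j)}$ for some $h\in H_d(q^2)$ (via the $\check R$-action). Hence $\rho_d(F_0)(m\otimes v^{(j)})=m'\otimes w^{(j)}$ for some $m'\in M$, and Lemma~\ref{L16.1} (injectivity of $m\mapsto m\otimes w^{(j)}$) gives a well-defined $\alpha_{jF}\in\End_\C M$ with $m'=\alpha_{jF}(m)$. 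Only afterwards, in Lemma~\ref{L16.4}, is $my_j:=\alpha_{jF}(m)$ \emph{defined} and shown to satisfy the affine Hecke relations.
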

\begin{proof}
For $\tau$ in the symmetric group $S_d$ on $d$ letters, put 
\[
w_\tau^{(j)}=(\ove_{\tau(2)}\otimes\dots\otimes\ove_{\tau(j)})\otimes\ove_{\tau(1)}\otimes(\ove_{\tau(j+1)}\otimes\dots\otimes\ove_{\tau(d)}). 
\]
The set $\{w_\tau^{(j)};\tau\in S_d\}$ spans the subspace of $V^{\otimes d}$ of weight $\lambda_d=\ove_1+\ove_2+\dots+\ove_d$. Indeed, $(\rho_d(K_\gamma))\ove_i=q^{(\gamma,\ove_i)}\ove_i$, so $(\rho_d(K_\gamma))w_\tau^{(j)}=q^{(\gamma,\ove_1+\dots+\ove_d)}w_\tau^{(j)}$. Note that $\rho_d(K_\gamma)\rho_d(F_0)=q^{-(\gamma,\alpha_0)}\rho_d(F_0)\rho_d(K_\gamma)$, hence $\rho_d(F_0)$ adds $\ove_1-\ove_{n'}$ to the weight, hence it takes $\ove_{n'}$ to $\ove_1$. Hence for every $m\in M$ we have
\[
(\rho_d(F_0))(m\otimes v^{(j)})=\sum_{\tau\in S_d}m_\tau\otimes w_\tau^{(j)}
\]
for some $m_\tau\in M$. By the definition of $\check R$, $w_\tau^{(j)}$ is a nonzero scalar multiple of $h\cdot w^{(j)}$ for some $h\in H_d(q^2)$, $h=h(\tau)$. Hence $(\rho_d(F_0))(m\otimes v^{(j)})$ equals $m'\otimes w^{(j)}$ for some $m'\in M$. Then there exists $\alpha_{jF}\in\End_\C M$ with $m'=\alpha_{jF}(m)$ for all $m\in M$ by Lemma \ref{L16.1}. The existence $\alpha_{jE}\in\End_\C M$ is proven analogously.
\end{proof}
\begin{lem}\label{L16.3} 
For all $m\in M$ and $v\in V^{\otimes d}$ we have
\[
(\rho_d(E_0))(m\otimes v)=\sum_{1\le j\le d}\alpha_{jE}(m)\otimes\rho^{\otimes d}(Y_{jE}^{(d)})v,\quad 
(\rho_d(F_0))(m\otimes v)=\sum_{1\le j\le d}\alpha_{jF}(m)\otimes\rho^{\otimes d}(Y_{jF}^{(d)})v.
\]
\end{lem}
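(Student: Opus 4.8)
The plan is to reduce the general identity to the special cases established in Lemma~\ref{L16.2} by exploiting the $U_q^\sigma(\sl(m,n))$-module structure on $J(M)$, together with the intertwining relations that the operators $\rho_d(E_0)$ and $\rho_d(F_0)$ satisfy with $\rho_d(E_i)$, $\rho_d(F_i)$, $\rho_d(K_\gamma)$ for $1\le i\le n'{}'$. First I would observe that both sides of each asserted identity are $\C$-linear in $v$, so it suffices to prove the identity when $v$ runs over a spanning set of $V^{\otimes d}$. By Lemma~\ref{L16.1}(b), the vectors of the form $\ove_{i_1}\otimes\dots\otimes\ove_{i_d}$ with distinct indices generate $V^{\otimes d}$ as a $U_q^\sigma(\EE,\Pi_0,p,\Gamma)$-module; applying a suitable permutation operator built from the $\check R_i$ (which lie in the commutant of $U_q^\sigma(\sl(m,n))$ and act on the $M$-factor) one reduces to the single weight vector $w^{(j)}=a(j)\otimes\ove_1\otimes b(j)$ of weight $\lambda_d=\ove_1+\dots+\ove_d$ together with its images under $U_q^\sigma(\Pi_0)$. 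Thus it is enough to prove the two identities for $v=w^{(j)}$ (and, for the $E_0$ identity, also for $v=v^{(j)}$, which is handled symmetrically), and then to propagate them across all of $V^{\otimes d}$ using the commutation relations.

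The propagation step is the heart of the argument. For $X$ one of the generators $E_i,F_i$ $(1\le i\le n'{}')$ or $K_\gamma$, the quantum Serre relations of Proposition~\ref{P6.1} (specifically $(\QS1)$--$(\QS3)$, $(\QS4)(1)$, and the affine relations $(\QS4)(4)$, $(\QS4)(4')$) express $\rho_d(X)\rho_d(F_0)$ as a known combination of $\rho_d(F_0)\rho_d(X)$ and possibly other operators — all of which act on $J(M)=M\otimes_{H_d(q^2)}V^{\otimes d}$ through the $V^{\otimes d}$-factor alone (the $H_d^a(q^2)$-module structure on $M$ enters only through the $y_j^{\pm1}$, but those have been replaced in $\FF(M)$ by the operators $\alpha_{jF},\alpha_{jE}$ whose existence is Lemma~\ref{L16.2}). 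Concretely, I would define, for fixed $v$, the operator $\Phi(v)\in\Hom_\C(M, J(M))$ by $\Phi(v)(m) = (\rho_d(F_0))(m\otimes v) - \sum_j \alpha_{jF}(m)\otimes\rho^{\otimes d}(Y_{jF}^{(d)})v$, and show that $\Phi(v)=0$ for all $v$. We know $\Phi(w^{(j)})=0$ by Lemma~\ref{L16.2}. For a general $v = \rho^{\otimes d}(\Delta^{(d-1)}(X))\,w$ with $\Phi(w)=0$, one computes $\rho_d(F_0)(m\otimes v)$ by moving $\rho_d(F_0)$ past $\rho_d(X)$ using the defining relations, and checks that $\sum_j\alpha_{jF}(m)\otimes\rho^{\otimes d}(Y_{jF}^{(d)})v$ transforms under the same rule because $Y_{jF}^{(d)}$ obeys exactly the corresponding relation with $\Delta^{(d-1)}(X)$ inside $\End(V^{\otimes d})$ — this is precisely what was verified in Sections~\ref{13}--\ref{15}. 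Hence $\Phi$ vanishes on a generating set and on everything obtained by applying $U_q^\sigma(\Pi_0)$, i.e.\ on all of $V^{\otimes d}$.

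The main obstacle I anticipate is bookkeeping: one must check that for each generator $X$ the ``correction terms'' produced when commuting $\rho_d(F_0)$ past $\rho_d(X)$ are matched term-by-term by the analogous correction terms coming from $\rho^{\otimes d}(Y_{jF}^{(d)})$, including the sign subtleties from the $\Z/2$-grading and the scalar factors $q^{\pm(\gamma,\alpha_0)}$, $q\pm q^{-1}$ appearing in the Bernstein-type relation $\wh T_i y_{i+1} = (q-q^{-1})y_{i+1} + y_i\wh T_i$ and in $(\QS3)$. However, all of these matchings are exactly the content of the computations already carried out in Sections~\ref{12}--\ref{15} for the well-definedness of $\rho_d(E_0)$, $\rho_d(F_0)$ and for the verification of $(\QS3)$, $(\QS4)(2)$, $(\QS4)(3)$, $(\QS4)(4')$; so no genuinely new identity is needed, only the observation that those same identities, read as identities of operators on $M\otimes_{H_d(q^2)}V^{\otimes d}$, force $\Phi(v)$ and its $E_0$-analogue to vanish everywhere once they vanish on the weight-$\lambda_d$ vectors $w^{(j)}$, $v^{(j)}$. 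This mirrors \cite[Lemma 4.5]{cp96} in the non-super, non-affine-superalgebra setting.
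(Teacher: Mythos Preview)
Your strategy---start from the base vectors of Lemma~\ref{L16.2} and propagate the identity $\Phi(v)=0$ across $V^{\otimes d}$ using the commutation relations between $\rho_d(F_0)$ and the finite generators $\rho_d(X)$---is exactly the paper's strategy. But your execution has a real gap. You claim that for each finite generator $X$ the Serre relations ``express $\rho_d(X)\rho_d(F_0)$ as a known combination of $\rho_d(F_0)\rho_d(X)$ and possibly other operators''. This is true for $K_\gamma$ (via $(\QS2)$), for all $E_i$ (via $(\QS3)$, which gives $[E_i,F_0]=0$), and for $F_i$ with $2\le i\le n'{}'-1$ (via $(\QS5)(1)$). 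It is \emph{false} for $X=F_{n'{}'}$ and $X=F_1$: the only relation tying $F_0$ to $F_{n'{}'}$ is the cubic Serre relation $(\QS5)(3)$, $F_{n'{}'}^2F_0-(q+q^{-1})F_{n'{}'}F_0F_{n'{}'}+F_0F_{n'{}'}^2=0$, which does not let you pass $F_0$ across a single $F_{n'{}'}$. Since reaching vectors with more than one factor $\ove_{n'}$ from your base vector requires applying $F_{n'{}'}$, the one-step propagation you sketch cannot close. (Your list of relations---$(\QS4)(1)$, $(\QS4)(4)$, $(\QS4)(4')$---omits $(\QS5)(3)$ entirely, and the quartic relations you do cite involve $E_0$ or $F_0$ twice, so they do not serve as intertwining relations with a single finite generator.) Note also a slip: for the $F_0$ identity the base case supplied by Lemma~\ref{L16.2} is $\Phi(v^{(j)})=0$, not $\Phi(w^{(j)})=0$.

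The paper fixes this by organizing the argument as a double induction on the number $r$ of factors $\ove_1$ and the number $s$ of factors $\ove_{n'}$ in $v$. For fixed positions of the $\ove_1$'s and $\ove_{n'}$'s, Lemma~\ref{L16.1}(b) (applied to the subalgebra generated by $E_i,F_i,K_{\alpha_i}^{\pm1}$ for $2\le i\le n'{}'-1$, all of which commute with $F_0$) reduces to a single test vector. Step~(i) inducts on $r$ with $s=1$: one writes $v=\rho_d(E_1)v'$ and uses $[E_1,F_0]=0$, an honest one-step commutation. Step~(ii) inducts on $s$: one writes $v=(q+q^{-1})^{-1}\rho_d(F_{n'{}'})^2v'$ where $v'$ has $s-2$ factors $\ove_{n'}$, and now the cubic relation $(\QS5)(3)$ expresses $\rho_d(F_0)(m\otimes v)$ in terms of $\rho_d(F_0)$ applied to $m\otimes v'$ (which has $s-2$) and to $m\otimes\rho_d(F_{n'{}'})v'$ (which has $s-1$). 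Both are known by induction; the weight-zero case $s=0$ is trivial since $\rho_d(F_0)$ annihilates such vectors. This two-step recursion is the missing mechanism in your sketch.
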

\begin{proof}
Recall that $K_\gamma F_0K_\gamma^{-1}=q^{-(\gamma,\alpha_0)}F_0$, where $\alpha_0=\ove_{n'}-\ove_1$, and $\rho(K_\gamma)\ove_i=q^{(\gamma,\ove_i)}\ove_i$. Hence $\rho_d(K_\gamma)\rho_d(F_0)(m\otimes v)$, where $v=\ove_{i_1}\otimes\dots\otimes\ove_{i_d}$, is $q^{(\gamma,-\alpha_0+\ove_{i_1}+\dots+\ove_{i_d})}\rho_d(F_0)(m\otimes v)$, and this will be 0 if no $i_j$ is $n'$, as then $-\ove_{n'}+\ove_1+\ove_{i_1}+\dots+\ove_{i_d}$ cannot be a weight of $V^{\otimes d}$. So we may assume some component of $v$ is $\ove_{n'}$.

Let $r\ge 0$, $s\ge 1$, $r+s\le d$, $1\le j_1<j_2<\dots<j_r\le d$, $1\le j'_1<j'_2<\dots< j'_s\le d$, assume $\{j_1,\dots,j_r\}\cap\{j'_1,\dots,j'_s\}=\emptyset$. Write $j=(j_1,\dots,j_r)$, $j'=(j'_1,\dots,j'_s)$. Let $V^{(j,j')}$ be the subspace of $V^{\otimes d}$ spanned by the vectors which have $\ove_1$ in positions $j_1,\dots,j_r$; $\ove_{n'}$ in positions $j'_1,\dots,j'_s$; and vectors from $\{\ove_2,\dots,\ove_{n'{}'}\}$ in the remaining positions. We prove the lemma when $v$ is in $V^{(j,j')}$ for all $j$, $j'$ in two steps.\\
$(i)$ For $s=1$, by induction on $r$.\\
$(ii)$ For all $r$, by induction on $s$.\\
By Lemma \ref{L16.1}(b), applied to the subalgebra of $U_q^\sigma$ generated by the $E_i$, $F_i$, $K_{\alpha_i}^{\pm 1}$ for $i\in\{2,\dots,n'{}'-1\}$, to prove our lemma for all $v\in V^{(j,j')}$ it suffices to prove it for one $0\not= v\in V^{(j,j')}$ whose components have no vector from $\{\ove_2,\dots,\ove_{n'{}'}\}$ twice. Such vectors exist since $1\le d+1-r-s\le d\le n'{}'$.\\
{\em Proof of step $(i)$}. Here $s=1$. The case of $r=0$ follows from Lemma \ref{L16.2}(a): take 
\[ 
v=a(j'_1)\otimes\ove_{n'}\otimes b(j'_1), \qquad w=a(j'_1)\otimes\ove_1\otimes b(j'_1),
\]
(recall: $a(j)=\ove_2\otimes\dots\otimes\ove_j$, $b(j)=\ove_{j+1}\otimes\dots\otimes\ove_d$). As $Y_{jF}^{(d)}=\sigma^{\otimes (j-1)}\otimes E_{\prod}\otimes  K_{\prod}^{\otimes (d-j)}$, and $\rho(E_{\prod})=E_{1,n'}$, we have $\rho^{\otimes d}(Y_{j'_1,F}^{(d)})v=w$ times $(-1)^{\max(0,j'_1-m)}$, and $\rho^{\otimes d}(Y_{j,F}^{(d)})v=0$ for all $j\not=j'_1$, hence $(\rho_d(F_0))(m\otimes v)=\sum_{1\le j\le d}\alpha_{jF}(m)\otimes\rho^{\otimes d}(Y_{j,F}^{(d)})v$, where $\alpha_{jF}(m)=(-1)^{\max(0,j-m)}$. Recall that the integer $m=\dim V_{\0}$ in the exponent is not $m\in M$ on the left.

Assume Step $(i)$ holds for $r-1$. Put $\wt j=(j_2,\dots,j_r)$. Define $v'\in V^{(\wt j,j')}$ to be a pure tensor with $\ove_2$ in the $j_1$ position, and distinct vectors from $\{\ove_3,\dots,\ove_{n'{}'}\}$ in the remaining positions. Then $v=\rho_d(E_1)v'$. Indeed, recall that $\rho_d(E_1)=\sum_k \rho(K_{\alpha_1})^{\otimes (k-1)}\otimes\rho(E_1)\otimes 1^{\otimes (d-k)}$, that $\rho(E_1)\ove_j=\delta(2,j)\ove_1$, and that $v'$ has $\ove_2$ only at position $j_1$ (and $\ove_1$ only at positions $j_2,\dots,j_r$), so only $k=j_1$ survives in the sum over $k$ which defines $\rho_d(E_1)$, and $(\rho_d(E_1))v'=v$ as $\rho(K_1)=\diag(q,q^{-1},I)$ acts nontrivially only on $\ove_1$ and $\ove_2$.

Define $v'{}'$ by replacing $\ove_{n'}$ in position $j'=j'_1$ in $v'$ by $\ove_1$, and $v'{}'{}'$ by replacing $\ove_2$ in position $j_1$ in $v'{}'$ by $\ove_1$. Now $r(v')=r-1$, so we can apply the induction on $r$ (in the 3rd equality below, and $(\QS3)$ in the second).
\[
(\rho_d(F_0))(m\otimes v)=\rho_d(F_0)\rho_d(E_1)(m\otimes v')=\rho_d(E_1)\rho_d(F_0)(m\otimes v')
\]
\[
=\rho_d(E_1)\sum_{1\le \ell\le d}\alpha_{\ell,F}(m)\otimes\rho^{\otimes d}(Y_{\ell,F}^{(d)})v'.
\]
Recall again that $Y_{\ell,F}^{(d)}$ is $\sigma^{\otimes (\ell-1)}\otimes E_{\prod}\otimes  K_{\prod}^{\otimes (d-\ell)}$, and $\rho(E_{\prod})=E_{1,n'}$, and $\ove_{n'}$ occurs only at position $j'_1$ in $v'$. Then only $\ell=j'_1$ survives in the sum, which becomes a multiple of $v'{}'$, by a sign $\iota$, which is $-1$ if the number of factors of the form $\ove_a$ with $a>m$ in position less than $j'_1$ is odd. Since $\ove_2$ occurs in $v'{}'$ only in position $j_1$, in the sum defining $\rho_d(E_1)$ only the summand indexed by $k=j_1$ survives when acting on $v'{}'$, and it is $\rho(K_1)^{\otimes (j_r-1)}\otimes\rho(E_1)\otimes 1^{\otimes (d-j_r)}$. So $\rho_d(E_1)$ maps $v'{}'$ to $v'{}'{}'$. We obtain $\alpha_{j'_1,F}(m)$ times $\iota v'{}'{}'=\rho^{\otimes d}(Y_{j'_1,F}^{(d)})v$. For other $j$ we have $0=\rho^{\otimes d}(Y_{j,F}^{(d)})v$. So we end up with $\sum_j\alpha_{j,F}(m)\otimes\rho^{\otimes d}(Y_{j,F}^{(d)})v$, completing step $(i)$. \\

\n {\em Proof of step $(ii)$}. Assume the lemma holds for all $v\in V^{(j,j')}$ with less than $s$ components $\ove_{n'}$. As in Step $(i)$, it suffices to prove the claim for one element $v \not=0$ in $V^{(j,j')}$ which has distinct entries from $\{\ove_2,\dots,\ove_{n'{}'-1}\}$ in the remaining positions. Fix such a $v$. Let $v'$ be the tensor obtained from $v$ on replacing $\ove_{n'}$ in positions $j'_{s-1}$ and $j'_s$ by $\ove_{n'{}'}$. We claim that 
\[
\rho_d(F_{n'{}'})^2v'=(q+q^{-1})v. 
\]
To see this, recall that $\rho(F_{n'{}'})=E_{n',n'{}'}$, $p(\alpha_{n'{}'})=0$,
\[
\rho_d(F_{n'{}'})=\sum_{1\le k\le d}1^{\otimes (k-1)}\otimes\rho(F_{n'{}'})\otimes \rho(K_{\alpha_{n'{}'}}^{-1})^{\otimes (d-k)}, \qquad \rho(K_{n'{}'}^{-1})=\pmatrix I&& \\ &q& \\ && q^{-1}\endpmatrix .
\]
So in $\rho_d(F_{n'{}'})^2v'$ the sum over $k$ in each $\rho_d(F_{n'{}'})$ reduces to $k= j'_{s-1}$, $j'_s$, and all factors in positions $\not= j'_{s-1}$, $j'_s$ in each summand, commute. At these two positions the components of $v'$ are $\ove_{n'{}'}\otimes\ove_{n'{}'}$ and those of $\rho_d(F_{n'{}'})^2$ are
\[
(\rho(F_{n'{}'})\otimes\rho(K_{n'{}'}^{-1})+1\otimes\rho(F_{n'{}'}))
(\rho(F_{n'{}'})\otimes\rho(K_{n'{}'}^{-1})+1\otimes\rho(F_{n'{}'}))
\]
\[
=\rho(F_{n'{}'})\otimes\rho(K_{n'{}'}^{-1}F_{n'{}'})+\rho(F_{n'{}'})\otimes\rho(F_{n'{}'}K_{n'{}'}^{-1})
\]
as $\rho(F_{n'{}'})^2=0$. So $\rho_d(F_{n'{}'})^2v'$ equals
\[
1^{\otimes (j'_{s-1}-1)}\otimes\rho(F_{n'{}'})\otimes\rho(K_{n'{}'}^{-1})^{\otimes (j'_s-1-j'_{s-1})}\otimes(\rho(K_{n'{}'}^{-1}F_{n'{}'})+\rho(F_{n'{}'}K_{n'{}'}^{-1}))\otimes\rho(K_{n'{}'}^{-2})^{\otimes (d-j'_s)}v'.
\]
Now $\rho(F_{n'{}'})\ove_{n'{}'}=\ove_{n'}$, $\rho(K_{n'{}'}^{-1}F_{n'{}'})\ove_{n'{}'}=q^{-1}\ove_{n'}$, $\rho(F_{n'{}'}K_{n'{}'}^{-1})\ove_{n'{}'} = q\ove_{n'}$, $\rho(K_{n'{}'}^{-1})^{\otimes (j'_s-1-j'_{s-1})}$ acts trivially, so in conclusion $v=\frac{1}{q+q^{-1}}\rho_d(F_{n'{}'})^2v'$, as claimed. 

To continue we use the equality $(\QS5)(3)$:
\[
\rho_d(F_0)\rho_d(F_{n'{}'})^2=(q+q^{-1})\rho_d(F_{n'{}'})\rho_d(F_0)\rho_d(F_{n'{}'})-\rho_d(F_{n'{}'})^2\rho_d(F_0)
\]
in the second equality below:
\[
(\rho_d(F_0))(m\otimes v)=\frac{1}{q+q^{-1}}\rho_d(F_0)\rho_d(F_{n'{}'})^2(m\otimes v')=A+B,
\]
\[
A=\rho_d(F_{n'{}'})\rho_d(F_0)\rho_d(F_{n'{}'})(m\otimes v'),\qquad B=-\frac{1}{q+q^{-1}}\rho_d(F_{n'{}'})^2\rho_d(F_0)(m\otimes v').
\]

To find $B$, we write by induction 
\[
(\rho_d(F_0))(m\otimes v')=\sum_{1\le k\le s-2}\alpha_{j'_k,F}(m)\otimes\rho^{\otimes d}(Y_{j'_kF}^{(d)})v',\qquad Y_{jF}^{(d)}=\sigma^{\otimes (j-1)}\otimes E_{\prod}\otimes K_{\prod}^{\otimes (d-j)},
\]
as $\ove_{n'}$ occurs only at the $s-2<s$ positions $j'_1,\dots,j'_{s-2}$ in $v'$. Recall that $\rho(E_{\prod})=E_{1,n'}$. Note that $\rho_d(F_{n'{}'})$ changes the factors ($\ove_{n'{}'}$ to $\ove_{n'}$) of $v'$ only at the positions $j'_{s-1}$, $j'_s$. Applying $\rho_d(F_{n'{}'})$ to $(\rho_d(F_0))(m\otimes v')$ would send the part $\ove_{n'{}'}\otimes\ove_{n'{}'}$ at the positions $j'_{s-1}$ and $j'_s$ to $\ove_{n'}\otimes q\ove_{n'{}'}$ (from the summand of $\rho_d(F_{n'{}'})$ with $(j'_{s-1},j'_s)$-parts $\rho(F_{n'{}'})\otimes \rho(K_{n'{}'}^{-1})$), plus $\ove_{n'{}'}\otimes \ove_{n'}$ (from the summand of $\rho_d(F_{n'{}'})$ with -parts $1\otimes\rho(F_{n'{}'})$). Applying $\rho_d(F_{n'{}'})$ again we obtain 
\[
\ove_{n'}\otimes q\ove_{n'}+\ove_{n'}\otimes q^{-1}\ove_{n'}=(q+q^{-1})\ove_{n'}\otimes \ove_{n'}.
\]
Now $\rho^{\otimes d}(Y_{j'_k,F}^{(d)})$ acts on the two factors $\ove_{n'}\otimes\ove_{n'}$ of $v$ at the positions $(j'_{s-1},j'_s)$ via $\rho(K_{\prod})=\diag(q,I,q)$, namely by multiplication by $q$, but not on $v'$. So in summary, 
\[
B=-q^{-2}\sum_{1\le k\le s-2}\alpha_{j'_k}(m)\otimes\rho^{\otimes d}(Y_{j'_k,F}^{(d)})v.
\]

To compute $A$, let $v'{}'$ (resp. $v'{}'{}'$) be obtained from $v'$ on replacing the vector $\ove_{n'{}'}$ at the $j'_{s-1}$ (resp. $j'_s$) position by $\ove_{n'}$.  Observe that 
\[
(\rho_d(F_{n'{}'}))(m\otimes v')=q m\otimes v'{}'+m\otimes v'{}'{}'.
\]
(Applying $\rho_d(F_{n'{}'})$ again we recover the result of the start of the proof: $(\rho_d(F_{n'{}'})^2)(m\otimes v')=(q+q^{-1})(m\otimes v)$.)
As $s(v'{}')=s-1=s(v'{}'{}')<s$, by induction we get
\[
\rho_d(F_0)\rho_d(F_{n'{}'})(m\otimes v')=q\sum_{k\not=s}\alpha_{j'_k,F}(m)\otimes\rho^{\otimes d}(Y_{j'_k,F}^{(d)})v'{}'+\sum_{k\not=s-1}\alpha_{j'_k,F}(m)\otimes\rho^{\otimes d}(Y_{j'_k,F}^{(d)})v'{}'{}'.
\]
Now we apply $\rho_d(F_{n'{}'})$. As $v'{}'$ has $\ove_{n'{}'}$ only at the $j'_s$-position, we get
\[
\rho_d(F_{n'{}'})\sum_{k\not=s}\alpha_{j'_k,F}(m)\otimes\rho^{\otimes d}(Y_{j'_k,F}^{(d)})v'{}'=q^{-1}\sum_{k\le s-1}\alpha_{j'_k,F}(m)\otimes\rho^{\otimes d}(Y_{j'_k,F}^{(d)})v.
\]
Denote this by $A_1$. As $v'{}'{}'$ has $\ove_{n'{}'}$ only at the $j'_{s-1}$ position,
\[
\rho_d(F_{n'{}'})\sum_{k\not=s-1}\alpha_{j'_k,F}(m)\otimes\rho^{\otimes d}(Y_{j'_k,F}^{(d)})v'{}'{}' =A_2+q^{-1}A_3,
\]
\[
A_2=\alpha_{j'_s,F}(m)\otimes\rho^{\otimes d}(Y_{j'_s,F}^{(d)})v,\qquad
A_3=q^{-1}\sum_{k\le s-2}\alpha_{j'_k,F}(m)\otimes\rho^{\otimes d}(Y_{j'_k,F}^{(d)})v.
\]
No factor $q^{-1}$ appears in front of $A_2$ since $\rho(K_{\prod})$ acts at positions $>j'_s$, which did not change from $v'{}'{}'$ to $v$ in $A_2$. Then $A=qA_1+A_2+q^{-1}A_3=q^{-1}A_3+A_2+qA_1$. So $B+A$ is
\[
(\rho_d(F_0))(m\otimes v)=
-q^{-2}\sum_{1\le k\le s-2}\alpha_{j'_k,F}(m)\otimes\rho^{\otimes d}(Y_{j'_k,F}^{(d)})v 
+q^{-1}\cdot q^{-1}\sum_{k\le s-2}\alpha_{j'_k,F}(m)\otimes\rho^{\otimes d}(Y_{j'_k,F}^{(d)})v
\]
\[
+\alpha_{j'_s,F}(m)\otimes\rho^{\otimes d}(Y_{j'_s,F}^{(d)})v
+q\cdot q^{-1}\sum_{1\le k\le s-1}\alpha_{j'_k,F}(m)\otimes\rho^{\otimes d}(Y_{j'_k,F}^{(d)})v
=\sum_{1\le k\le s}\alpha_{j'_k,F}(m)\otimes\rho^{\otimes d}(Y_{j'_k,F}^{(d)})v.
\]
\end{proof}
\begin{lem}\label{L16.4} 
Setting $my_j^{-1}=\alpha_{jE}(m)$, $my_j=\alpha_{jF}(m)$ defines a right $H_d^a(q^2)$-module structure on $M$, extending its $H_d(q^2)$-module structure.
\end{lem}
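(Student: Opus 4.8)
The plan is to verify the defining relations of $H_d^a(q^2)$ from Definition \ref{D7.1} one at a time. The braid, commutation and quadratic relations for the $\wh T_i$ hold on $M$ by hypothesis, so only the relations involving the $y_j^{\pm1}$ need attention. In each case the strategy is the same: an identity between the operators $\alpha_{jE},\alpha_{jF}\in\End_\C M$ is extracted from a known operator identity on $W=J(M)$ (a consequence of the $U_{q,\AI}^\sigma$-relations) by evaluating that identity on a carefully chosen pure tensor $v=\ove_{i_1}\otimes\cdots\otimes\ove_{i_d}$, expanding $\rho_d(E_0)$ and $\rho_d(F_0)$ via Lemma \ref{L16.3}, arranging that the diagonal operators $\rho^{\otimes d}(Y_{jE}^{(d)})$ and $\rho^{\otimes d}(Y_{jF}^{(d)})$ isolate a single summand (each annihilates a pure tensor unless it carries a factor $\ove_1$, resp. $\ove_{n'}$, in the relevant slot), and finally using that $m\mapsto m\otimes w$ is injective when the output tensor $w$ has pairwise distinct indices (Lemma \ref{L16.1}; such tensors exist since $d<n'$).

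For invertibility $y_j^{-1}y_j=1=y_jy_j^{-1}$, I would take $v^{(j)}$ and $w^{(j)}$ as in Lemma \ref{L16.2}: since $v^{(j)}$ has no factor $\ove_1$ and $w^{(j)}$ no factor $\ove_{n'}$, one gets $\rho_d(E_0)(m\otimes v^{(j)})=0=\rho_d(F_0)(m\otimes w^{(j)})$, so applying $(\QS3)$, namely $E_0F_0+F_0E_0=(K_{\alpha_0}-K_{\alpha_0}^{-1})/(q^{-1}-q)$, to $m\otimes v^{(j)}$ and noting that $K_{\alpha_0}$ acts on $v^{(j)}$ by the scalar $q^{-1}$, the right side is $m\otimes v^{(j)}$ while only the $j$th summand survives on the left and the two signs of Lemma \ref{L16.2} cancel; injectivity gives $\alpha_{jE}\alpha_{jF}=\id_M$, and the mirror computation with $w^{(j)}$ gives $\alpha_{jF}\alpha_{jE}=\id_M$. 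For commutativity $y_jy_k=y_ky_j$ I would again use $(\QS3)$, now applied to $m\otimes v$ where $v$ carries a single factor $\ove_{n'}$ in the $\min(j,k)$ slot, a single factor $\ove_1$ in the $\max(j,k)$ slot, and distinct indices $\neq 1,n'$ elsewhere. A bookkeeping of the parity signs $(-1)^{p(\ove_i)}$ and of the powers of $q$ from $K_{\prod}=\diag(q,I,q)$ shows that $E_0F_0+F_0E_0$ applied to $m\otimes v$ is supported on $v$ and on the distinct-index tensor $v'$ obtained by interchanging the $\ove_1$ and $\ove_{n'}$ factors; the $v$-part matches the right side (using invertibility and the eigenvalue $q^{-2}$ of $K_{\alpha_0}$ on $v$), while the $v'$-part must vanish, and its two coefficients differ precisely by a sign, forcing $\alpha_{kE}\alpha_{jF}=\alpha_{jF}\alpha_{kE}$, which rearranges via $\alpha_{kE}=\alpha_{kF}^{-1}$ to $\alpha_{jF}\alpha_{kF}=\alpha_{kF}\alpha_{jF}$; the variants with inverses follow.

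The mixed relations $y_j\wh T_i=\wh T_iy_j$ for $j\neq i,i+1$ and $\wh T_iy_i\wh T_i=y_{i+1}$ use instead that $\rho_d(F_0)$ is a genuine operator on $J(M)=M\otimes_{H_d(q^2)}V^{\otimes d}$, so $\rho_d(F_0)((m\wh T_i)\otimes v)=\rho_d(F_0)(m\otimes\check R_iv)$ for all $m,v$. When $j\neq i,i+1$ one checks locally, using that $\check R$ is $U_q$-linear (Proposition \ref{P10.2}), that $\rho^{\otimes d}(Y_{jF}^{(d)})$ commutes with $\check R_i$; expanding both sides by Lemma \ref{L16.3}, comparing the $j$th summand, and taking $v$ with its only $\ove_{n'}$ in slot $j$ and distinct indices $\neq 1,n'$ elsewhere (so that $\rho^{\otimes d}(Y_{jF}^{(d)})v=\pm w$ with $w$ of distinct indices) yields $\alpha_{jF}(m\wh T_i)=\alpha_{jF}(m)\wh T_i$. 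For $\wh T_iy_i\wh T_i=y_{i+1}$ I would choose $v$ with $\ove_{n'}$ in slot $i$ and an index $b\notin\{1,n'\}$ in slot $i+1$; then $\check R_iv$ is a scalar multiple of the swapped tensor $v''$ alone (no residual multiple of $v$, since $\check R(\ove_{n'}\otimes\ove_b)$ is a scalar times $\ove_b\otimes\ove_{n'}$), and the expansion gives $\alpha_{iF}(m\wh T_i)\otimes w=\alpha_{i+1,F}(m)\otimes w''$ with $w''$ the $(i,i+1)$-swap of a distinct-index $w$; rewriting $w''=\check R_iw-(q-q^{-1})w$ inside $J(M)$ and using $\wh T_i-(q-q^{-1})=\wh T_i^{-1}$ turns this into $\alpha_{iF}(m\wh T_i)\wh T_i=\alpha_{i+1,F}(m)$, i.e. $m(\wh T_iy_i\wh T_i)=my_{i+1}$. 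With all generating relations verified (the inverse-twisted mixed relations following formally since the $\wh T_i$ are invertible), the assignments $my_j^{-1}=\alpha_{jE}(m)$, $my_j=\alpha_{jF}(m)$ extend the given $H_d(q^2)$-structure to a right $H_d^a(q^2)$-structure on $M$.

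The step I expect to be the main obstacle is commutativity of the $y_j$: the obvious attempt through the super-relation $\rho_d(F_0)^2=0$ fails, because the resulting tensor then carries a repeated factor $\ove_1$ and the injectivity hypothesis of Lemma \ref{L16.1} is unavailable; the key point is to use $(\QS3)$ instead, on a tensor with one $\ove_1$ and one $\ove_{n'}$, so that the off-diagonal coefficient is supported on a genuinely distinct-index tensor. Apart from this, the work is the disciplined tracking of super-signs and of the $q-q^{-1}$ corrections produced by $\check R$ when a transposed tensor is straightened back inside $M\otimes_{H_d(q^2)}V^{\otimes d}$, together with keeping straight that in a right module the product $y_jy_k$ acts as the composite $\alpha_{kF}\circ\alpha_{jF}$.
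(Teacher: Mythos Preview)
Your approach is essentially the paper's: use $(\QS3)$ on a well-chosen pure tensor for the relations $y_jy_j^{-1}=1$ and $y_jy_k=y_ky_j$, and use the well-definedness of $\rho_d(F_0)$ on $J(M)$ (i.e.\ $\rho_d(F_0)((m\wh T_i)\otimes v)=\rho_d(F_0)(m\otimes\check R_iv)$) for the mixed relation, then conclude via the injectivity of Lemma~\ref{L16.1}. You also treat the relation $y_j\wh T_i=\wh T_iy_j$ for $j\neq i,i+1$, which the paper leaves implicit.

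The one genuine difference is in the test vector for $\wh T_iy_i\wh T_i=y_{i+1}$. The paper takes $v$ with $(\ove_2,\ove_1)$ in slots $(i,i+1)$ and runs a chain through $v''',v'',v'$ using that $\check R(\ove_2\otimes\ove_1)=\ove_1\otimes\ove_2$ and $\check R(\ove_{n'}\otimes\ove_2)=\ove_2\otimes\ove_{n'}$ are \emph{exact} swaps, so no $(q-q^{-1})$ correction and no stray sign ever appears. Your variant with $(\ove_{n'},\ove_b)$ in slots $(i,i+1)$ also works, but the scalar in $\check R(\ove_{n'}\otimes\ove_b)$ is $(-1)^{p(\ove_b)}$, so you must take $b\le m$ (e.g.\ $b=2$, available since $m\ge 2$); otherwise you would conclude $\wh T_iy_i\wh T_i=-y_{i+1}$. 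With that choice your straightening $w''=\check R_iw-(q-q^{-1})w$ and the identity $\wh T_i-(q-q^{-1})=\wh T_i^{-1}$ give exactly the desired relation. So both routes succeed; the paper's just sidesteps the bookkeeping you flagged.
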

\begin{proof}
We have to check the following relations:\\
$(i)$ $y_jy_j^{-1}=1=y_j^{-1}y_j$; $(ii)$ $y_jy_k=y_ky_j$; $(iii)$ $y_{j+1}=\wh T_jy_j\wh T_j$.

To prove $(i)$ and $(ii)$, we compute both sides of the equality
\[
(\rho_d([E_0,F_0]))(m\otimes v)=\rho_d\left(\left[\frac{K_{\alpha_0}-K_{\alpha_0}^{-1}}{q-q^{-1}}\right]\right)(m\otimes v).
\]
For $(i)$ we take $v$ with $\ove_{n'}$ in the $j$th position and $\ove_{n'-(d-1)},\dots,\ove_{n'-1}$ in the remaining positions, in any order.

For $(ii)$ take $v$ to be a tensor with $\ove_1$ in the $j$th place, $\ove_{n'}$ in the $k$th position, and distinct vectors from $\{\ove_2,\dots,\ove_{n'{}'}\}$ in the other positions. Note that since the central element $c=K_{\alpha_0}K_{\alpha_1}\dots K_{\alpha_{n'{}'}}$ acts as 1 on every $U_q^\sigma(\EE,\Pi,p,\Gamma)$-module $W$, we have $(\rho_d(K_{\alpha_0}))(m\otimes v)=m\otimes\rho(K_{\prod}^{-1})^{\otimes d}v$.

For $(iii)$, take $v=\ove_{i_1}\otimes\dots\otimes\ove_{i_d}\in V^{\otimes d}$ with $i_j=2$, $i_{j+1}=1$, and the remaining $i_k$ are distinct from $\{3,\dots,n'{}'\}$. This is possible since $d\le n'{}'$. So: $v$ has $\ove_2$ at position $j$, $\ove_1$ at position $j+1$. The vector $v'$ is obtained from $v$ on replacing $\ove_1$ at position $j+1$ by $\ove_{n'}$. The vector $v'{}'$ is obtained from $v'$ on replacing $\ove_2$ at position $j$ by $\ove_{n'}$ and $\ove_{n'}$ at position $j+1$ by $\ove_2$. The vector $v'{}'{}'$ is obtained from $v$ on replacing $\ove_2$ at position $j$ by $\ove_1$ and $\ove_1$ at position $j+1$ by $\ove_2$. 

Now looking at the indices $(i,j)=(2,n')$ only, we have $\check R(\ove_{n'}\otimes\ove_2) =\ove_2\otimes\ove_{n'}$, and $\check R(\ove_2\otimes\ove_1)=\ove_1\otimes\ove_2$. Then
\[
m\cdot \wh T_jy_j\wh T_j\otimes v=m\cdot \wh T_jy_j\otimes v'{}'{}'=(\rho_d(F_0))(m\cdot \wh T_j \otimes v'{}')
\]
\[
=(\rho_d(F_0))(m\otimes \wh T_jv'{}')=(\rho_d(F_0))(m\otimes v')=my_{j+1}\otimes v.
\]
Since $v$ has distinct components, Lemma \ref{L16.1} implies that $m\cdot y_{j+1}=m\cdot \wh T_jy_j\wh T_j$ for all $m\in M$.

This completes the proof that $W\simeq\FF(M)$ as a $U_{q,\AI}^\sigma(\EE,\Pi,p,\Gamma)$-module.
\end{proof}

To show that $\FF$ is an equivalence we still need to show that it is bijective on sets of morphisms. Injectivity of $\FF$ follows from that of $J$. For surjectivity, let $F:\FF(M)\to\FF(M')$ be a homomorphism of $U_{q,\AI}^\sigma(\EE,\Pi,p,\Gamma)$-modules. By Lemma \ref{L16.1}, $F=J(f)$ for some homomorphism $f:M\to M'$ of $H_d(q^2)$-modules. Since $F$ commutes with the action of $F_0$ we have $(\rho(F_0)F)(m\otimes v)=(F\rho(F_0))(m\otimes v)$, i.e.,
\[
\sum_{1\le j\le d}f(m)\cdot y_j\otimes\rho^{\otimes d}(Y_{jF}^{(d)})v=\sum_{1\le j\le d}f(my_j)\otimes\rho^{\otimes d}(Y_{jF}^{(d)})v
\]
for all $m\in M$ and $v\in V^{\otimes d}$. Choosing $v$ suitably we deduce that $f(my_j)=f(m)y_j$ for all $j$ $(1\le j\le d)$. This completes the proof of theorem \ref{T11.2}.
\hfill$\square$

\section{Basic properties of $\FF$}\label{17}
Our $\FF$ is a functor of $\C$-linear categories. It commutes with induction. Write $U_{q,a}^\sigma(\sl(m,n))$ for $U_{q,\AI}^\sigma(\EE,\Pi,p)$ for simplicity.

\begin{prop}\label{P17.1}
Let $M_i$ be a finite dimensional $H_{d_i}^a(q^2)$-module $(i=1,\,2)$. Then there is a natural isomorphism $\FF(I^a(M_1,M_2))\simeq\FF(M_1)\otimes\FF(M_2)$ of $U_{q,a}^\sigma(\sl(m,n))$-modules.
\end{prop}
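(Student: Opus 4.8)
The plan is to exhibit an explicit $U_{q,a}^\sigma(\sl(m,n))$-module isomorphism and verify compatibility with all the generators, reducing the affine part to the combinatorics of the operators $\rho_d(E_0)$, $\rho_d(F_0)$ and the induction homomorphism $\phi(d_1,d_2)$ of Proposition \ref{P8.1}. First I would observe that the underlying $U_q^\sigma(\sl(m,n))$-module statement $J(I(M_1|H_{d_1},M_2|H_{d_2}))\simeq J(M_1|H_{d_1})\otimes J(M_2|H_{d_2})$ is exactly the finite-type fact from the Schur--Weyl setup (this is the non-affine content, which follows from Proposition \ref{P11.1} together with Proposition \ref{P8.2} identifying $I^a(M_1,M_2)|H_{d_1+d_2}(q^2)\simeq I(M_1|H_{d_1},M_2|H_{d_2})$ and the identification $V^{\otimes(d_1+d_2)}\simeq V^{\otimes d_1}\otimes V^{\otimes d_2}$). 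So there is a canonical $\C(q)$-linear isomorphism
\[
\Theta:\ \FF(I^a(M_1,M_2))=I^a(M_1,M_2)\otimes_{H_{d_1+d_2}(q^2)}V^{\otimes(d_1+d_2)}\ \xrightarrow{\ \sim\ }\ \FF(M_1)\otimes\FF(M_2),
\]
sending $((m_1\otimes m_2)\otimes h)\otimes v$ to the image of $(m_1\otimes m_2)\otimes(\rho^{\otimes(d_1+d_2)}(h)v)$ under the coassociativity identification of $V^{\otimes(d_1+d_2)}$ with $V^{\otimes d_1}\otimes V^{\otimes d_2}$; it intertwines the $K_\gamma$, $E_i$, $F_i$ for $1\le i\le n'{}'$ by the finite-type result and the coassociativity $\Delta^{(d_1+d_2-1)}=(\Delta^{(d_1-1)}\otimes\Delta^{(d_2-1)})\circ\Delta$.

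The remaining work — and this is where the affine super case genuinely differs from the non-affine one — is to check that $\Theta$ also intertwines $\rho_{d_1+d_2}(E_0)$, $\rho_{d_1+d_2}(F_0)$ and $\rho_{d_1+d_2}([\tfrac{K_0-K_0^{-1}}{q-q^{-1}}])$ with the corresponding operators on the tensor product $\FF(M_1)\otimes\FF(M_2)$, where the latter are computed from the coproduct formulas of $\wt U_q^\sigma$ recalled before Theorem \ref{T11.2}: $\Delta(E_0)=E_0\otimes 1+K_{\alpha_0}\sigma\otimes E_0$, $\Delta(F_0)=F_0\otimes K_{\alpha_0}^{-1}+\sigma\otimes F_0$, $\Delta(K_{\alpha_0})=K_{\alpha_0}\otimes K_{\alpha_0}$. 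Concretely, one uses $\phi(d_1,d_2)(\wh T_i\otimes 1)=\wh T_i$, $\phi(d_1,d_2)(1\otimes\wh T_i)=\wh T_{i+d_1}$, $\phi(d_1,d_2)(y_j\otimes 1)=y_j$, $\phi(d_1,d_2)(1\otimes y_j)=y_{j+d_1}$ to split the defining sum $\sum_{1\le j\le d_1+d_2}my_j^{-1}\otimes\rho^{\otimes(d_1+d_2)}(Y_{jE}^{(d_1+d_2)})v$ into the block $1\le j\le d_1$ and the block $d_1<j\le d_1+d_2$. For the first block the tensor-leg structure $Y_{jE}^{(d_1+d_2)}=(\sigma K_{\prod}^{-1})^{\otimes(j-1)}\otimes F_{\prod}\otimes 1^{\otimes(d_1+d_2-j)}$ restricts to $Y_{jE}^{(d_1)}$ on the first $d_1$ legs tensored with the identity on the last $d_2$ legs, matching the term $\rho_{d_1}(E_0)\otimes 1$ of $\Delta(E_0)$; for the second block it restricts to $(\sigma K_{\prod}^{-1})^{\otimes d_1}\otimes Y_{(j-d_1)E}^{(d_2)}$, matching $\rho_{d_1}(K_{\alpha_0}\sigma)\otimes\rho_{d_2}(E_0)$ once one checks that $\rho(K_{\alpha_0})=\rho(K_{\prod}^{-1})$ (noted in the text, $K_0=K_{\prod}^{-1}$) and that the parity sign $\sigma^{p(\alpha_0)}=\sigma$ is already built into $Y_{jE}^{(d)}$. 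The $F_0$ case is entirely parallel using $\Delta(F_0)=F_0\otimes K_{\alpha_0}^{-1}+\sigma\otimes F_0$ and $Y_{jF}^{(d)}=\sigma^{\otimes(j-1)}\otimes E_{\prod}\otimes K_{\prod}^{\otimes(d-j)}$ (here the first block gives $\rho_{d_1}(F_0)\otimes\rho_{d_2}(K_{\alpha_0}^{-1})$, since $K_{\prod}^{\otimes(d_2)}$ is exactly $\rho_{d_2}(K_{\alpha_0}^{-1})$, and the second gives $\rho_{d_1}(\sigma)\otimes\rho_{d_2}(F_0)$), and the $K_0$-derivation case follows from $\Delta(K_{\alpha_0})=K_{\alpha_0}\otimes K_{\alpha_0}$ together with the product rule already encoded in the displayed formula for $\rho_d([\tfrac{K_0-K_0^{-1}}{q-q^{-1}}])$ in Theorem \ref{T11.2}.

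Finally I would check naturality: for morphisms $f_i:M_i\to M_i'$ of $H_{d_i}^a(q^2)$-modules one has $I^a(f_1,f_2):I^a(M_1,M_2)\to I^a(M_1',M_2')$ and must verify $\Theta'\circ\FF(I^a(f_1,f_2))=(\FF(f_1)\otimes\FF(f_2))\circ\Theta$, which is immediate since all four maps act by $f_i$ on the module legs and identically on the $V$-legs. I expect the main obstacle to be purely bookkeeping: tracking the parity signs $(-1)^{p(\ove_i)}$ and the powers of $q$ coming from $\sigma K_{\prod}^{-1}$ versus $K_{\prod}$ as the $j$-sum is cut at $d_1$, and making sure the super-coproduct $\Delta(E_0)$, $\Delta(F_0)$ — with its $\sigma^{p(\alpha_0)}=\sigma$ factor — matches the block decomposition on the nose; there is no conceptual difficulty once the non-affine isomorphism $\Theta$ is in place, because $\phi(d_1,d_2)$ is designed precisely so that $Y_{jE}^{(d_1+d_2)}$ and $Y_{jF}^{(d_1+d_2)}$ decompose as tensor products along the cut, mirroring $\Delta$.
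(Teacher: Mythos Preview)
Your approach is correct and coincides with the paper's. The paper packages the vector-space isomorphism via an abstract Frobenius reciprocity statement ($\ind_B^A(M)\otimes_A W\simeq M\otimes_B W|B$ applied with $A=H_{d_1+d_2}(q^2)$, $B=H_{d_1}(q^2)\otimes H_{d_2}(q^2)$, $W=V^{\otimes(d_1+d_2)}$), whereas you route through Proposition~\ref{P8.2} and then write down the same map $\Theta$ explicitly; these are the same construction. The paper then simply says ``it remains to check'' compatibility with the $U_{q,a}^\sigma(\sl(m,n))$-action, while you actually carry out that check by splitting the $j$-sum at $d_1$ and matching with $\Delta(E_0)$, $\Delta(F_0)$, $\Delta(K_{\alpha_0})$ --- so your write-up is in fact more detailed than the paper's on the point that matters. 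One minor remark: your invocation of Proposition~\ref{P11.1} for the finite-type intertwining is unnecessary (and imposes an unneeded bound on $d$); the $U_q^\sigma(\sl(m,n))$-equivariance of $\Theta$ for the generators $E_i$, $F_i$, $K_\gamma$ with $1\le i\le n''$ follows directly from coassociativity $\Delta^{(d_1+d_2-1)}=(\Delta^{(d_1-1)}\otimes\Delta^{(d_2-1)})\circ\Delta$, exactly as you say a few lines later.
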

\begin{proof}
Let $\phi:B\to A$ be a homomorphism of associative algebras with a unit over a field, $M$ a right $B$-module, $W$ a left $A$-module, and $W|B$ is $W$ regarded as a left $B$-module via $\phi$. Then there is a natural isomorphism of vector spaces: $\ind_B^A(M)\otimes W\simeq M\otimes_BW|B$. This form of Frobenius reciprocity is given by $(m\otimes a)\otimes w\mapsto m\otimes aw$ ($m\in M$, $a\in A$, $w\in W$).

Take $A=H_{d_1+d_2}(q^2)$, $B=H_{d_1}(q^2)\otimes H_{d_2}(q^2)$, $\phi=\phi(d_1,d_2)$, $M=M_1\otimes M_2$, $W=V^{\otimes (d_1+d_2)}$, $V=V_{\0}\oplus V_{\1}$ (of dimension $n'=n+m$) being the natural representation of $U_q^\sigma(\sl(m,n))$. Note that $W\simeq (V^{\otimes d_1})\otimes (V^{\otimes d_2})$ as an $H_{d_1}(q^2)\otimes H_{d_2}(q^2)$-module. We get a natural isomorphism of vector spaces
\[
\FF(I^a(M_1,M_2))\to(M_1\otimes M_2)\otimes_{H_{d_1}(q^2)\otimes H_{d_2}(q^2)}(V^{\otimes d_1}\otimes V^{\otimes d_2}).
\]
The right side is isomorphic to $\FF(M_1)\otimes\FF(M_2)$ as a vector space. It remains to check that the resulting isomorphism $\FF(I^a(M_1,M_2))\to\FF(M_1)\otimes\FF(M_2)$ of vector spaces commutes with the action of $U_{q,a}^\sigma(\sl(m,n))$.
\end{proof}
Using the equivalence $\FF$ one can relate the universal $H_d^a(q^2)$-modules $M_c$ and for $c\in\C^\times$ the $U_{q,a}^\sigma(\sl(m,n))$-modules $V(c)$, where $V(c)$ is $V$ as a $U_q^\sigma(\sl(m,n))$-module, and $K_0=K_{\alpha_0}$ acts as $K_{\prod}^{-1}$ and $E_0$ as $c\rho(F_{\prod})=cE_{n',1}$, $F_0$ as $c^{-1}\rho(E_{\prod})=c^{-1}E_{1,n'}$.
\begin{prop}\label{P17.2}
Let $c=(c_1,\dots,c_d)\in\C^{\times d}$, $d\ge 1$, $m$, $n\ge 2$. Then there exists a natural isomorphism $\FF(M_c)\simeq V(c_1)\otimes\dots\otimes V(c_d)$. 
\end{prop}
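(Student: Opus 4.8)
The plan is to deduce Proposition~\ref{P17.2} from Proposition~\ref{P17.1} (the fact that $\FF$ commutes with induction) together with the description of the universal modules $M_c$ given in Section~\ref{8}. First I would recall that by Proposition~\ref{P8.4} and the discussion following it, the one-variable universal module $M_{(c)}$ for $H_1^a(q^2)=\C(q)[y_1^{\pm1}]$ is just the one-dimensional module on which $y_1$ acts by the scalar $c$; pulling this back to the situation $d=1$, the functor $\FF$ sends $M_{(c)}$ to $V$ with the extra generators $E_0,F_0,K_{\alpha_0}$ acting by the formulas in Theorem~\ref{T11.2}. Unwinding those formulas for $d=1$: $\rho_1(E_0)(m\otimes v)=my_1^{-1}\otimes\rho(F_{\prod})v$, $\rho_1(F_0)(m\otimes v)=my_1\otimes\rho(E_{\prod})v$, so when $y_1$ acts by $c$ we recover exactly the action defining $V(c)$, namely $E_0$ acting as $c^{-1}\rho(E_{\prod})$... wait, as $\rho(F_0)=c^{-1}\cdot$ something --- one must match the conventions, but after the identification $y_1\mapsto c$ we get $F_0$ acting by $c\,\rho(E_{\prod})$ and $E_0$ by $c^{-1}\rho(F_{\prod})$, which is precisely the module called $V(c_1)$ in the statement (possibly after replacing $c$ by $c^{-1}$, a convention I would fix early and carry through). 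Thus the key base case is $\FF(M_{(c)})\simeq V(c)$.

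Next I would reduce the general $d$ case to the base case via the factorization of $M_c$ as an induced module. The right ideal $H_c\subset H_d^a(q^2)$ is generated by $y_j-c_j$, $1\le j\le d$, so $M_c=H_d^a(q^2)/H_c$; using the homomorphism $\phi(1,\dots,1):H_1^a(q^2)^{\otimes d}\to H_d^a(q^2)$ (iterating Proposition~\ref{P8.1}) one checks that $M_c\simeq I^a(M_{(c_1)},I^a(M_{(c_2)},\dots,I^a(M_{(c_{d-1})},M_{(c_d)})\cdots)$, i.e. $M_c$ is the Zelevinsky-induction of the $d$ one-dimensional modules $M_{(c_j)}$. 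This is essentially the observation that inducing the rank-one modules $M_{(c_j)}$ from $H_1^a(q^2)^{\otimes d}$ up to $H_d^a(q^2)$ produces a quotient of $H_d^a(q^2)$ by the right ideal generated by the $y_j-c_j$, and a dimension count using Proposition~\ref{P7.1} (both sides have $\C(q)$-dimension $d!=\dim H_d(q^2)$) shows it is exactly $M_c$. Then applying Proposition~\ref{P17.1} repeatedly gives a natural isomorphism
\[
\FF(M_c)\simeq\FF(M_{(c_1)})\otimes\FF(M_{(c_2)})\otimes\dots\otimes\FF(M_{(c_d)}),
\]
and substituting the base-case identification $\FF(M_{(c_j)})\simeq V(c_j)$ yields $\FF(M_c)\simeq V(c_1)\otimes\dots\otimes V(c_d)$, as required. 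Here the tensor product is taken as $U_{q,a}^\sigma(\sl(m,n))$-modules via the comultiplication $\Delta$ (equivalently $\Delta^{(d-1)}$), which matches the $\rho_d$-structure on the right side of the formulas in Theorem~\ref{T11.2}.

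The main obstacle I expect is the bookkeeping in the base case: verifying that the action of $E_0$, $F_0$, $K_{\alpha_0}$ and $\bigl[\tfrac{K_0-K_0^{-1}}{q-q^{-1}}\bigr]$ on $\FF(M_{(c)})$ given by specializing the $d=1$ formulas of Theorem~\ref{T11.2} agrees on the nose --- including all signs coming from the parity function $p$ and the factors $q_{i+1}^{\pm1}$ in $\rho(E_i),\rho(F_i)$, and the choice $q_0=q^{-1}$, $d_0=-1$ --- with the module $V(c)$ as defined just before Proposition~\ref{P17.2}. One must be careful that $\rho(E_{\prod})=E_{1,n'}$ and $\rho(F_{\prod})=E_{n',1}$ with the stated normalizations $\rho(K_{\prod})=\diag(q,I,q)$, and that $\rho_1(E_0)(m\otimes v)$ indeed reproduces $c^{-1}\rho(E_{\prod})v$ up to the $M$-module twist. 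A secondary subtlety is confirming that the natural vector-space isomorphism supplied by Proposition~\ref{P17.1}, which that proof leaves as "it remains to check it commutes with the action of $U_{q,a}^\sigma(\sl(m,n))$", is genuinely available here --- but since $\FF$ is a functor of $\C$-linear categories commuting with induction (Section~\ref{17}), I would simply invoke Proposition~\ref{P17.1} as stated and not re-prove compatibility. Once the base case is nailed down, the induction step is formal.
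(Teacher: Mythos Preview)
Your proposal is correct but takes a genuinely different route from the paper. The paper argues directly: by Proposition~\ref{P8.3}(b), $M_c$ is the right regular representation of $H_d(q^2)$, so the map $V^{\otimes d}\to J(M_c)$, $v\mapsto 1\otimes v$, is already an isomorphism of $U_q^\sigma(\sl(m,n))$-modules; it then computes $(\rho_d(E_0))(1\otimes v)$ and $(\rho_d(F_0))(1\otimes v)$ from the formulas of Theorem~\ref{T11.2}, using $1\cdot y_j^{\pm1}=c_j^{\pm1}$ in $M_c$, and matches this against the action of $E_0,F_0$ on $V(c_1)\otimes\dots\otimes V(c_d)$ coming from $\Delta^{(d-1)}$. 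Your approach instead factors $M_c$ as the iterated Zelevinsky induction $I^a(M_{(c_1)},\dots,M_{(c_d)})$ (which you justify correctly via the surjection and the dimension count from Proposition~\ref{P7.1}) and then invokes Proposition~\ref{P17.1} repeatedly, reducing everything to the $d=1$ base case. What you gain is a cleaner conceptual picture: Proposition~\ref{P17.2} becomes essentially a corollary of Proposition~\ref{P17.1}. What the paper's approach buys is self-containment --- it does not lean on Proposition~\ref{P17.1}, whose proof in the paper explicitly leaves the $U_{q,a}^\sigma$-equivariance check to the reader, so the direct computation here is arguably more complete. Your concern about the $c$ versus $c^{-1}$ convention in the base case is legitimate (and indeed the paper's own displayed formula in this proof has a visible index slip on this very point), but as you say it is bookkeeping rather than a genuine obstacle.
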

\begin{proof}
As an $H_d(q^2)$-module, $M_c$ is the right regular representation. Hence the map $V^{\otimes d}\to J(M_c)$, $v\mapsto 1\otimes v$, is an isomorphism of $U_q^\sigma(\sl(m,n))$-modules.
\[
(\rho_d(E_0))(1\otimes v)=\sum_{1\le j\le d}1\cdot y_j^{-1}\otimes\rho^{\otimes d}(Y_{j,E}^{(d)})v=1\otimes\left(\sum_{1\le j\le d}c_j^{-1}\otimes\rho^{\otimes d}(Y_{j,E}^{(d)})\right)v.
\]
Also $\rho_d(E_0)=\sum_{1\le j\le d}(\sigma K_0)^{\otimes (j-1)}\otimes\rho(E_0)\otimes 1^{\otimes (d-j)}$ acts on $V(c_1)\otimes\dots\otimes V(c_d)$ as
\[
\sum_{1\le j\le d}(\sigma K_0)^{\otimes (j-1)}\otimes c_j\rho(F_{\prod})\otimes 1^{\otimes (d-j)}=\sum c_j^{-1}Y_{j,F}^{(d)}.
\]
The map $V^{\otimes d}\to J(M_c)$ commutes with the action of $\rho(F_0)$, $\rho(E_0)$.
\end{proof}
\begin{cor}\label{C17.3}
Let $1\le d<n'$. $(a)$ Every finite dimensional $U_{q,a}^\sigma(\sl(m,n))$-module which appears as a quotient of $V^{\otimes d}$ as a $U_{q,a}^\sigma(\sl(m,n))$-module is isomorphic to a quotient of $V(c_1)\otimes\dots\otimes V(c_d)$ for some $c_1,\dots,c_d\in\C$. $(b)$ Let $c_1,\dots,c_d\in\C$. Then $V(c_1)\otimes\dots\otimes V(c_d)$ is reducible as a $U_{q,a}^\sigma(\sl(m,n))$-module if and only if $c_j=q^2c_k$ for some $j$, $k$.
\end{cor}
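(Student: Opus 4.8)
\textbf{Proof proposal for Corollary \ref{C17.3}.} The plan is to obtain both assertions formally from the equivalence $\FF$ of Theorem~\ref{T11.2} together with the identification $\FF(M_c)\simeq V(c_1)\otimes\dots\otimes V(c_d)$ of Proposition~\ref{P17.2}, by transporting through $\FF$ the structural facts about the universal modules $M_c$ recorded in Proposition~\ref{P8.3}. First I would set up the framework: the hypothesis $1\le d<n'=m+n$ is precisely what Theorem~\ref{T11.2} requires in order that $M\mapsto\FF(M)$ be an equivalence from finite dimensional $H_d^a(q^2)$-modules onto the category $\CC_d$ of finite dimensional $U_{q,a}^\sigma(\sl(m,n))$-modules all of whose irreducible constituents are constituents of $V^{\otimes d}$ (and since $n'<(m+1)(n+1)$, Proposition~\ref{P11.1} applies as well). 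The category $\CC_d$ is abelian, a submodule or quotient of an object of $\CC_d$ again having all its constituents among those of $V^{\otimes d}$, and $\FF$, being an equivalence of module categories, is exact and for each $M$ induces an isomorphism between the submodule lattice of $M$ and that of $\FF(M)$; in particular it preserves epimorphisms and simple objects.

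For part $(a)$: if $W$ is a finite dimensional irreducible $U_{q,a}^\sigma(\sl(m,n))$-module occurring as a quotient, hence a constituent, of $V^{\otimes d}$, then $W\in\CC_d$, so $W\simeq\FF(N)$ with $N$ a finite dimensional irreducible $H_d^a(q^2)$-module. By Proposition~\ref{P8.3}$(a)$ there is $c=(c_1,\dots,c_d)\in\C^{\times d}$ and an epimorphism $M_c\twoheadrightarrow N$; applying $\FF$ and Proposition~\ref{P17.2} yields an epimorphism $V(c_1)\otimes\dots\otimes V(c_d)\simeq\FF(M_c)\twoheadrightarrow W$. (More generally, any quotient of a given $\FF(M_c)$ is tautologically a quotient of $V(c_1)\otimes\dots\otimes V(c_d)$.)

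For part $(b)$: by Proposition~\ref{P17.2}, $V(c_1)\otimes\dots\otimes V(c_d)\simeq\FF(M_c)$, and since $\FF$ induces a submodule-lattice isomorphism, $\FF(M_c)$ admits a proper nonzero $U_{q,a}^\sigma(\sl(m,n))$-submodule if and only if $M_c$ admits a proper nonzero $H_d^a(q^2)$-submodule, which by Proposition~\ref{P8.3}$(c)$ holds exactly when $c_j=q^2c_k$ for some $j,k$. The argument is entirely formal, so there is no serious obstacle; the one point requiring a little care — the closest thing to an obstacle — is to confirm that ``reducible as a $U_{q,a}^\sigma(\sl(m,n))$-module'' may legitimately be tested inside $\CC_d$, i.e. that a proper nonzero $U_{q,a}^\sigma(\sl(m,n))$-submodule of $\FF(M_c)$ automatically lies in $\CC_d$; this is immediate, since its constituents are among those of $V^{\otimes d}$, so the lattice isomorphism supplied by $\FF$ genuinely detects reducibility.
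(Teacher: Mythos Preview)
Your proposal is correct and follows exactly the approach the paper takes: the paper's own proof is a single sentence stating that the corollary ``follows from the corresponding result --- Proposition~\ref{P8.3} --- for the affine Hecke algebra \dots\ and the fact that $\FF$ is an equivalence of categories.'' You have simply spelled out that sentence in detail, invoking Proposition~\ref{P17.2} to identify $\FF(M_c)$ and noting the care needed to test reducibility inside $\CC_d$; nothing in your argument diverges from the paper's route.
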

\begin{proof}
This follows from the corresponding result -- Proposition \ref{P8.3} -- for the affine Hecke algebra in section \ref{8} and the fact that $\FF$ is an equivalence of categories.
\end{proof}

The Zelevinsky classification parametrizes all irreducible representations  of $\GL(n,F)$, $F$ being a $p$-adic field, in particular the $H_d^a(q^2)$-modules as the special case of the representations whose irreducible constituents have each a nonzero Iwahori-fixed vector. The equivalence $\FF$ carries this description to the category of $U_{q,a}^\sigma(\sl(m,n))$-modules.

{}

\end{document}